\newtheorem{theorem}{Theorem}[section]
\newtheorem{corollary}[theorem]{Corollary}
\newtheorem{lemma}[theorem]{Lemma}
\newtheorem{proposition}[theorem]{Proposition}
\theoremstyle{definition}
\newtheorem{definition}[theorem]{Definition}
\theoremstyle{remark}
\theoremstyle{remark}
\newtheorem{remark}[theorem]{Remark}
\theoremstyle{remark}
\newcommand{\N}{\mathbb{N}}
\newcommand{\Z}{\mathbb{Z}}
\newcommand{\CC}{\mathbb{C}}
\newcommand{\C}{\mathcal{C}}
\newcommand{\F}{\mathcal{F}}
\newcommand{\FT}{\mathcal{F}} 
\newcommand{\KC}[1]{C_N\left( #1 \right)}
\newcommand{\hkc}{\h_{C_N}}
\newcommand{\KCnonzero}[1]{C_N^{\geq 1}\left( #1 \right)}
\newcommand{\KCsimp}[1]{\widetilde{\KC{ #1 }}}
\newcommand{\infFT}{\mathcal{A}} 
\newcommand{\B}{\mathcal{B}} 
\newcommand{\BB}{\tilde{\B}} 
\newcommand{\Btwocol}[1][\gamma]{\B^{(#1)}_{(#1')}}
\newcommand{\BBtwocol}[1][\gamma]{\BB^{(#1)}_{(#1')}}
\newcommand{\Bn}{\mathfrak{B}_n} 
\newcommand{\BnGen}{\mathfrak{G}_n} 
\newcommand{\Bminus}[3]{#1 \setminus #1^{#2}_{#3}} 
\newcommand{\Bminuss}[5]{#1 \setminus \left(#1^{#2}_{#3},\dots,#1^{#4}_{#5}\right)} 
\newcommand{\Bminusss}[5]{#1 \setminus \left(#1^{#2}_{#3},#1^{#4}_{#5}\dots\right)} 
\newcommand{\Ach}{\mathbf{A}}
\newcommand{\Bch}{\mathbf{B}}
\newcommand{\Cch}{\mathbf{C}}
\newcommand{\Xch}{\mathbf{X}}
\newcommand{\Ych}{\mathbf{Y}}
\newcommand{\Mch}{\mathbf{M}}
\newcommand{\im}{\mathrm{Im}}
\newcommand{\cone}{\mathrm{Cone}}
\newcommand{\Mcone}{\mathrm{Mcone}}
\newcommand{\h}{\mathrm{h}}
\newcommand{\q}{\mathrm{q}}
\newcommand{\cs}[1]{|#1|_{\mathrm{c}}} 
\newcommand{\hspsymbol}[2]{\hspace{#1}#2\hspace{#1}}
\newcommand{\qq}[1]{\quad #1 \quad}
\newcommand{\td}[1]{\widetilde{#1}}
\newcommand{\webC}{\mathbf{NWeb}}
\newcommand{\foamC}{\mathbf{NFoam}}
\newcommand{\qbinom}[2]{{#1 \brack #2}}
\newcommand{\sln}{\mathfrak{sl}_N}
\newcommand{\qsln}{\mathcal{U}_q(\sln)}
\tikzstyle directed=[postaction={decorate,decoration={markings,
    mark=at position #1 with {\arrow{>}}}}]
\tikzstyle rdirected=[postaction={decorate,decoration={markings,
    mark=at position #1 with {\arrow{<}}}}]
\tikzset{->-/.style={decoration={
  markings,
  mark=at position #1 with {\arrow{>}}},postaction={decorate}}}
\tikzset{-<-/.style={decoration={
  markings,
  mark=at position #1 with {\arrow{<}}},postaction={decorate}}}
\def\centerarc[#1](#2)(#3:#4:#5)
\title{Khovanov-Rozansky homology for infinite multi-colored braids}
\author{Michael Willis \\
Department of Mathematics, UCLA\\
\href{mailto:msw188@ucla.edu}{\texttt{msw188@ucla.edu}}}
\begin{document}

\maketitle

\begin{abstract}
We define a limiting $\sln$ Khovanov-Rozansky homology for semi-infinite positive multi-colored braids, and we show that this limiting homology categorifies a highest-weight projector for a large class of such braids.  This effectively completes the extension of Cautis' similar result for infinite twist braids, begun in our earlier papers with Islambouli and Abel.  We also present several similar results for other families of semi-infinite and bi-infinite multi-colored braids.
\end{abstract}

\section{Introduction}
The Jones-Wenzl projector $P_n$ \cite{Wenzl} is a special idempotent element of the Temperley-Lieb algebra representing a highest-weight projector in the representation theory of $\mathcal{U}_q(\mathfrak{sl}_2)$, used in particular to define WRT-invariants for 3-manifolds (see, for example, \cite{KL}).  Similarly we have analogous highest-weight projectors in the representation theory of $\mathcal{U}_q(\mathfrak{sl}_N)$ for all $N$.  A sequence of papers by various authors \cite{Rose12, Cau12, Hog15} showed that, in all of these cases, such highest-weight projectors could be categorified via infinite chain complexes associated to the stable limiting Khovanov-Rozansky complex of infinite full twists (including the case as $N\rightarrow\infty$ corresponding to Khovanov-Rozansky HOMFLY-PT homology).  Indeed similar statements hold when we consider infinite full twists where we allow ourselves to color the strands with any natural number less than $N$, corresponding to irreducible anti-symmetric representations of $\mathcal{U}_q(\mathfrak{sl}_N)$.

A natural question for all of these cases one might ask would be what happens if we consider the limiting Khovanov-Rozansky complex of some other infinite braid.  Together with Islambouli in \cite{MWGI} and Abel in \cite{MAMW}, the author has shown that, for \emph{all} (semi-)infinite \emph{uni-colored} braids $\BB$ that are both positive and complete (a braid is complete if each braid group generator appears infinitely many times; intuitively, this means that $\BB$ contains all the crossings necessary to build the infinite full twist $\FT^\infty$), the limiting Khovanov-Rozansky complex $\KC{\BB}$ is chain homotopy equivalent to $\KC{\FT^\infty}$, the limiting complex of the infinite full twist.  Thus we have the following imprecise theorem (see the original papers for the precise versions).
\begin{theorem}[\cite{MWGI} Theorem 1.1, \cite{MAMW} Theorem 1.1]
\label{thm:old inf braid result}
All positive complete semi-infinite uni-colored braids categorify corresponding highest weight projectors.
\end{theorem}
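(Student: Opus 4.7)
The plan is to reduce the result to the known projector theorems for the infinite full twist $\FT^\infty$. By Rose \cite{Rose12}, Cautis \cite{Cau12}, and Hogancamp \cite{Hog15}, the limiting complex $\KC{\FT^\infty}$ is already known to categorify the relevant highest-weight $\sln$ projector, so it suffices to produce a chain homotopy equivalence $\KC{\BB} \simeq \KC{\FT^\infty}$ for every positive complete semi-infinite uni-colored braid $\BB$.

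To build such an equivalence, I would realize both sides as homotopy colimits of their finite truncations. Writing $\BB_n$ for the subword formed by the first $n$ generators of $\BB$, one has
\[
\KC{\BB} \;=\; \hocolim_{n \to \infty}\, \KC{\BB_n},
\]
with an analogous description of $\KC{\FT^\infty}$ as a homotopy colimit over finite powers $\FT^k$. Completeness of $\BB$ then guarantees that for every $k$ there exists $n(k)$ with $\BB_{n(k)} = \FT^k \cdot \gamma_k$ for some positive braid $\gamma_k$ (after braid-group rewriting, using that each generator appears infinitely often). These decompositions yield comparison maps between the two directed systems that are cofinal in each direction.

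The essential technical input is the \emph{absorbing property} of $\FT^\infty$: attaching any single positive crossing $\sigma$ on the right gives $\KC{\FT^\infty \cdot \sigma} \simeq \KC{\FT^\infty}$, with the homotopy controlled by explicit quantum and homological shifts that drift to infinity. Iterating this absorption along the completeness-guaranteed subsequence lets one argue that the extra crossings in $\gamma_k$ are absorbed in the limit, giving the desired chain homotopy equivalence.

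The main obstacle will be making the passage to the limit precise. One needs to verify that the errors introduced at each finite stage drift to infinity in the relevant bigrading, so that the homotopy colimit produces a genuine chain homotopy equivalence rather than a mere zigzag of weak equivalences in the appropriate completed category. Quantitative control on these shifts, combined with convergence arguments tailored to the filtration used in \cite{MWGI} and \cite{MAMW}, is exactly where positivity is used: it guarantees that all shift terms are bounded below and that the relevant filtration is exhaustive. Controlling the colored case likely also requires a careful choice of which generator to absorb at each stage, to avoid breaking the coloring conventions in the underlying foam/web calculus.
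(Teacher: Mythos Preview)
The paper does not prove this theorem itself; it is cited from \cite{MWGI,MAMW} and then generalized in Theorem~\ref{thm:main thm precise}. Comparing your sketch to the method actually used (both in the cited papers and in this paper's generalization), there are two concrete gaps.

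First, the limiting complex is built as an \emph{inverse} limit, not a homotopy colimit. The system maps run $\KC{\B_{m_{k+1}}}\to\KC{\B_{m_k}}$ and are obtained by projecting each newly added (uni-colored) positive crossing onto its identity resolution in homological degree zero (Lemma~\ref{lem:1-color crossing complex}, Proposition~\ref{prop:colorpure inv sys}). There are no natural chain maps in the direction your $\hocolim$ would require, and the convergence criterion is Rozansky's Cauchy condition for inverse systems (Definition~\ref{def:invSys}, Theorem~\ref{thm:Cauchy}); this is what ``errors drift to infinity'' actually means in this setting.

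Second, your key factorization $\BB_{n(k)} = \FT^k\cdot\gamma_k$ with $\gamma_k$ positive is not what completeness provides, and is in general false in the positive braid monoid: having each generator appear infinitely often does not let you peel off $\FT^k$ as a left factor via positive braid moves. What completeness does give is that one may \emph{delete} a collection of crossings from $\B_{m_\ell}$ to obtain $\FT^{z(\ell)}$. On the level of complexes this deletion is precisely the quotient map onto identity resolutions, yielding comparison maps $F_\ell:\KC{\B_{m_\ell}}\to\KC{\FT^{z(\ell)}}$ that commute with the inverse-system maps by construction (Section~\ref{sec:constructing Fell}). The homological order $|F_\ell|_\h$ is then bounded below by expanding $\cone(F_\ell)$ as a multicone and pulling rungs through the available full twists via Proposition~\ref{prop:isotopyShifts} (Section~\ref{sec:bounding Fell}), after which Proposition~\ref{prop:compSys} finishes the argument. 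The absorbing property you invoke is a \emph{consequence} of all this (Corollary~\ref{cor:P absorbs colorpure}), not the engine of the proof.
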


The goal of this paper is to remove the ``uni-colored" restriction from Theorem \ref{thm:old inf braid result}.  Doing so will require a new restriction on the types of braids that are considered.  The imprecise version is presented here (the precise version will be stated in Section \ref{sec:proving main thm}).

\begin{theorem}
\label{thm:imprecise main thm}
All positive color-complete semi-infinite braids categorify corresponding highest weight projectors.
\end{theorem}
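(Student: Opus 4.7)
The plan is to follow the strategy pioneered in \cite{MWGI, MAMW}: for any positive color-complete semi-infinite braid $\BB$, I would show that the limiting complex $\KC{\BB}$ is chain homotopy equivalent to $\KC{\FT^\infty}$, where $\FT^\infty$ is the multi-colored infinite full twist whose strand colors match those of $\BB$. Since Cautis's categorification results already identify $\KC{\FT^\infty}$ with a categorified highest weight projector in the multi-colored setting, this equivalence would immediately yield the theorem. In particular, the projector's defining property (absorbing any additional positive braid up to a grading shift) should already categorify to the statement that $\KC{\FT^\infty \cdot \beta}\simeq \KC{\FT^\infty}$ up to shift, for any positive color-compatible braid $\beta$ placed on top, and I would begin by establishing this absorption statement carefully in the multi-colored category $\foamC$.

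Next I would set up the inverse-limit machinery. Let $\BB_n$ denote the truncation of $\BB$ to its first $n$ crossings, and note that $\KC{\BB}$ is defined as the homotopy inverse limit of the $\KC{\BB_n}$ under the positive-stabilization maps. The heart of the argument is a sequence of finite-level comparison maps: for each $k$, I would use color-completeness to find an index $n(k)$ large enough that $\BB_{n(k)}$ contains, up to positive rearrangement, a specified $k$-fold partial full twist $\FT_k$ built from the same colored strand data. This step is the analog of the counting arguments in \cite{MWGI,MAMW}, but now requires that for every adjacent pair of strands of any color combination the relevant generator appears sufficiently often in $\BB$; this is precisely the color-completeness hypothesis that should be formalized in Section \ref{sec:proving main thm}.

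With these partial containments in place, I would combine them with the absorption lemma to produce compatible chain maps $\KC{\BB_{n(k)}} \to \KC{\FT_k}$ and their homotopy inverses in a coherent manner. Passing to the inverse limit in $k$, these assemble into the desired homotopy equivalence $\KC{\BB}\simeq \KC{\FT^\infty}$. The bookkeeping here is essentially the same as in the uni-colored case, and the main work is ensuring that the partial full twist $\FT_k$ can be canonically recognized as a subword of a suitable rearrangement of $\BB_{n(k)}$.

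The main obstacle will be the combinatorics of color-aware braid rearrangements. In the uni-colored setting, one can freely apply distant-commutativity and far-commutation to reorder generators, and any complete braid eventually exhausts all generators to any desired multiplicity. In the multi-colored setting, however, different pairs of colored strands give rise to distinct generators, and a Reidemeister III / braid-like move on colored strands can require the presence of very specific colored crossings. Consequently, the definition of color-complete must be strong enough that every rearrangement used in the absorption step is realized entirely by crossings already present in $\BB$, and one must simultaneously control the positions of strands as their colors permute under braiding. Once the correct color-completeness definition is pinned down and shown to be preserved under the needed rearrangements, the remainder of the proof should proceed in parallel to the uni-colored arguments of \cite{MWGI, MAMW}.
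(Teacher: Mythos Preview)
Your outline captures the overall shape of the argument but misses the central new difficulty in the multi-colored setting, and this gap would prevent the proof from going through as written.

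The key problem is your assumption that the inverse system for $\KC{\BB}$ can be built from ``positive-stabilization maps'' obtained by adding one crossing at a time, and that comparison maps $\KC{\BB_{n(k)}}\to\KC{\FT_k}$ can be produced by rearrangement plus absorption. In the uni-colored case every positive crossing has the identity diagram as its degree-zero resolution, so resolving a crossing always yields a quotient map. In the multi-colored case this fails: for a crossing between strands colored $i\neq j$, the degree-zero term of the complex is a trapezoid web, not the identity (see Equation~\eqref{eq:posxDef}). Hence there is no natural ``delete a crossing'' quotient map, and neither your system maps nor your comparison maps are defined. The paper resolves this by (i) truncating $\B$ only at points in the \emph{maximal purity sequence}, so that the sub-braid being deleted is color-pure, and (ii) proving (via Lemma~\ref{lem:clasps in colorpure} and the two-colored clasp Lemma~\ref{lem:2-color clasp complex}) that any positive color-pure braid has complex $\simeq\cone(I\to\Xch)$ with $\Xch$ controlled by intermediate colorings of strictly smaller color-size. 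This cone structure is what furnishes the quotient maps $F_\ell$ and the system maps.

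A second, related issue: you plan to construct homotopy inverses to the comparison maps and invoke an absorption lemma for the infinite projector. The paper does neither. Absorption (Corollary~\ref{cor:P absorbs colorpure}) is a \emph{consequence} of the main theorem, not an input; and no inverses are built. Instead one shows $|F_\ell|_\h\to\infty$ by expanding $\cone(F_\ell)$ as a multicone, pulling rungs through the available full twists (Proposition~\ref{prop:isotopyShifts} and Figure~\ref{fig:pull rung thru twist}), and using the color-size drop guaranteed by Proposition~\ref{prop:colorpure complex} to produce a growing homological shift. Your outline's ``combinatorics of color-aware braid rearrangements'' is not where the real work lies; the real work is the clasp lemma and the resulting homological-shift bookkeeping.
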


We will define color-completeness precisely in Section \ref{sec:color complete}, but intuitively this will mean that the braid $\BB$ will contain all of the `properly colored' crossings necessary to build the `properly colored' infinite full twist $\FT^\infty$.  Figure \ref{fig:noncolorcomplete ex} shows an example of a braid which is complete but not color-complete, and it can be seen why such a restriction might be expected.  Since the strands colored $i$ and $k$ in that example never interact, it seems plausible that the resulting limiting complex may not be related to the complex of the infinite full twist in which the strands colored $i$ and $k$ must twist around each other infinitely often.

\begin{figure}
\centering
\begin{tikzpicture}[x=.5cm,y=-.5cm]
\node at (0,0){$i$};
\node at (1,0){$j$};
\node at (2,0){$k$};
\Bsigma[1]{3}{1}
\Bsigma[2]{3}{1}
\Bsigma[3]{3}{2}
\Bsigma[4]{3}{2}
\Bsigma[5]{3}{1}
\Bsigma[6]{3}{1}
\Bsigma[7]{3}{2}
\Bsigma[8]{3}{2}
\node at (1,10){$\vdots$};
\end{tikzpicture}
\caption{An example of a positive semi-infinite braid (assuming this pattern of crossings continues indefinitely) that is complete if colors are ignored, but is not color-complete since the strands colored $i$ and $k$ never interact.  For a more precise definition, see Section \ref{sec:color complete}.}
\label{fig:noncolorcomplete ex}
\end{figure}
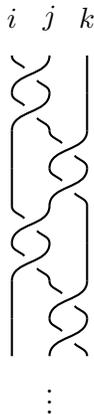

From Theorem \ref{thm:imprecise main thm} we will be able to deduce corollaries for a variety of situations.  We will have results for semi-infinite braids with finitely many negative crossings (Corollary \ref{cor:fin many negs}) allowing for a version of Theorem \ref{thm:imprecise main thm} taking Reidemeister II moves into account (Corollary \ref{cor:main thm posneg}).  We also have results for certain non-color-complete braids (Corollary \ref{cor:horizontal splitting}).  All of these are similar to our previous results in \cite{MAMW,MWGI}.  We will also discuss bi-infinite braids, for which defining a coloring and a corresponding limiting complex requires some more care.

\begin{theorem}
\label{thm:imprecise main thm bi-inf}
All positive color-complete bi-infinite braids $\BB$ have limiting complexes of the form $P \otimes \Cch \otimes P'$ where $P$ and $P'$ indicate categorified projectors for two (possibly different) sequences of colors depending on the coloring of $\BB$, while $\Cch$ is a complex categorifying certain maps between the representations determined by the two color sequences.
\end{theorem}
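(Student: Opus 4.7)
The plan is to reduce the bi-infinite statement to the already-established semi-infinite case (Theorem \ref{thm:imprecise main thm}) by slicing $\BB$ horizontally. A bi-infinite color-complete positive braid $\BB$ comes equipped with a coloring of its strands; reading off the asymptotic color sequence near the top gives a sequence $\mathbf{c}_+$, and reading off near the bottom gives a possibly different sequence $\mathbf{c}_-$. Color-completeness should be formulated so that, separately, the upward-going and downward-going halves of $\BB$ are each color-complete as semi-infinite braids with color sequences $\mathbf{c}_+$ and $\mathbf{c}_-$ respectively.

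Next, I would pick any horizontal cut separating $\BB$ into three pieces: a semi-infinite upper half $\BB_+$, a finite middle braid $\Mch$, and a semi-infinite lower half $\BB_-$. Applying Theorem \ref{thm:imprecise main thm} to each of the two semi-infinite halves yields chain homotopy equivalences $\KC{\BB_+} \simeq P$ and $\KC{\BB_-} \simeq P'$, where $P$ and $P'$ are the highest-weight projectors associated to $\mathbf{c}_+$ and $\mathbf{c}_-$. Because Khovanov-Rozansky complexes of braids glue under vertical stacking, this gives $\KC{\BB} \simeq P \otimes \KC{\Mch} \otimes P'$, and one sets $\Cch := \KC{\Mch}$. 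Since $P$ and $P'$ behave idempotently under stacking with positive crossings (a property already exploited in the proofs of Theorem \ref{thm:old inf braid result} and Theorem \ref{thm:imprecise main thm}), different choices of cut should yield chain homotopy equivalent middle complexes, so $\Cch$ is well defined up to the expected equivalence; moreover, $\Cch$ naturally represents a morphism between the highest-weight representations determined by $\mathbf{c}_-$ and $\mathbf{c}_+$, exactly as the theorem asserts.

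The main obstacle I anticipate is making precise sense of $\KC{\BB}$ itself for a bi-infinite braid. In the semi-infinite case one constructs $\KC{\BB}$ as a homotopy colimit along an increasing tower of finite-braid complexes, and the proof of Theorem \ref{thm:imprecise main thm} controls this tower via stability of the projector $P$ under further positive stacking. For a bi-infinite braid one needs a limiting procedure that allows the braid to grow both upward and downward simultaneously, and one must verify that the stability statements carry through on both sides independently. Concretely, this means showing that finite truncations $\BB_{[-m,n]}$ (with the upper and lower halves separately growing) produce a coherent system of chain complexes whose joint homotopy limit is well defined and compatible with the factorization above. Once this bookkeeping --- including the correct $\q$- and $\h$-degree shifts so that the connecting maps stabilize --- is in place, the decomposition $\KC{\BB} \simeq P \otimes \Cch \otimes P'$ follows formally by applying Theorem \ref{thm:imprecise main thm} to each half separately.
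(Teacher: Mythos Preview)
Your proposal is correct and follows essentially the same route as the paper's proof (Corollary \ref{cor:main thm bi-inf}): cut $\BB$ into a finite middle piece flanked by two color-complete semi-infinite braids, apply Theorem \ref{thm:main thm precise} to each half, glue via Lemma \ref{lem:Lim of product}, and verify independence of the cut using the absorption property of the projectors (Corollary \ref{cor:P absorbs colorpure}). Two small points: the paper defines the bi-infinite limit via \emph{inverse} limits (not homotopy colimits), and the cuts must be taken at points in the respective maximal purity sequences so that the two halves are genuinely color-pure --- once those bookkeeping details are in place, your argument matches the paper's.
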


The precise version of Theorem \ref{thm:imprecise main thm bi-inf} will be presented as Corollary \ref{cor:main thm bi-inf}.

The proof of Theorem \ref{thm:imprecise main thm} effectively generalizes the earlier proofs for different versions of Theorem \ref{thm:old inf braid result} in \cite{MWGI,MAMW}.  We present a short outline here.  Let $\B$ be a semi-infinite braid word representing the infinite braid $\BB$, and let $\B_\ell$ denote the finite `partial' braid word consisting of the first $\ell$ crossings in $\B$.  We view $\B$ as a limit of $\B_\ell$ as the length of the words $\ell\rightarrow\infty$.  Unlike in previous versions, this limit will be taken along a subsequence of lengths such that each $\B_\ell$ represents a so-called `color-pure' braid (where the sequences of colors at the top and bottom of the braid must match).  The color-completeness assumption will ensure we have maps roughly of the form $F_\ell:\KC{\B_\ell} \rightarrow \KC{\FT^{z(\ell)}}$ where $z$ is some non-decreasing function of $\ell$ such that $z\rightarrow\infty$ as $\ell\rightarrow\infty$ along the necessary subsequence.  Our goal then is to show that $F_\ell$ is an isomorphism in homological degrees below some bound $|F_\ell|_\h$ that grows arbitrarily large as $\ell\rightarrow\infty$.  This is accomplished by keeping track of homological shifts in the cone of $F_\ell$ (actually in a larger complex containing $\cone(\F_\ell)$) resulting from pulling `rungs' through crossings and full twists.  Despite the structural similarity to the earlier versions of the proof, there are significant differences in the details of this version of the construction which we will focus on: the color-completeness and color-purity requirements for $\BB$ and $\B_\ell$ respectively; the resulting care needed to properly construct the system maps, the map $F_\ell$, and its cone; and the slightly different combinatorics encountered when computing the necessary homological shifts.

\begin{remark}
The local nature of the arguments in this paper potentially lend themselves well to similar homology theories for links in other 3-manifolds.  For instance, we expect all of our results to hold for the link homologies defined in the recent papers \cite{Quef,QW} when applied to an infinite braid within a 3-ball.  We leave precise statements and proofs about such generalizations for future consideration.
\end{remark}

This paper is arranged as follows.  Section \ref{sec:HomAlg} will present a short review of the necessary homological algebra, focused on defining and comparing limits of inverse systems of complexes as well as manipulating complexes defined as multicones.  Section \ref{sec:KR background} will review the relevant definitions and manipulations of colored Khovanov-Rozansky homology for braids.  Section \ref{sec:main pf} is the technical heart of the paper, providing the proof of Theorem \ref{thm:imprecise main thm}.  Section \ref{sec:corollaries} will explore a variety of corollaries, including the handling of negative crossings and bi-infinite braids.

\subsection{Acknowledgments} The author would like to thank Gabriel Islambouli and Michael Abel for their help with earlier versions of the arguments; Paul Wedrich for some helpful conversations; Sucharit Sarkar and Ciprian Manolescu for various advice; and Matt Hogancamp for many very patient conversations on handling some of the more technical homological algebra results.  The author was partially funded by NSF grant DMS-1563615 while preparing this manuscript.

\section{Homological algebra background}\label{sec:HomAlg}
In order to properly state and prove our theorems about semi-infinite chain complexes coming from semi-infinite braids, we will need the following ideas from homological algebra.  Throughout this section, all complexes are taken over some additive category, and differentials are taken to increase homological grading by one.

\begin{definition}\label{def:basic hom alg notations}
We employ the following basic notations.
\begin{itemize}
\item Chain complexes shall be denoted by capital boldface letters, as in $\Ach$, with differential $d_A$ (the subscript will be omitted if no confusion is possible).
\item Superscripts on capital letters will indicate homological degree.  Thus $A^i$ will denote the objects of $\Ach$ in homological degree $i$, while $\Ach^{\geq i}$ will denote the subcomplex of $\Ach$ consisting of all objects in homological degrees greater than or equal to $i$.
\item Single terms within a complex shall often be denoted by lowercase greek letters; the notation $\alpha\in\Ach$ indicates that $\alpha$ is a term in the complex $\Ach$.
\item For $\alpha\in\Ach$, $\h_A(\alpha)$ shall denote the homological degree of the term $\alpha$ within $\Ach$.
\item $\h$ will also denote the homological shift functor, so that $\h^t\Ach$ indicates the complex $\Ach$ where all terms have been shifted upwards in homological degree by $t$.
\item The notation $f\sim g$ will indicate that two chain maps $f$ and $g$ are homotopic.  The notation $\Ach\simeq\Bch$ will indicate that two complexes $\Ach$ and $\Bch$ are chain homotopy equivalent.
\end{itemize}
\end{definition}

\subsection{Inverse systems and limits}\label{sec:InvSys}
The entirety of this section is based on definitions in \cite{RozInf}, and is taken nearly verbatim from the similar sections in \cite{MWGI,MAMW}.

\begin{definition}\label{def:homOrder}
Let $\Ach$ and $\Bch$ be chain complexes and suppose $f: \Ach \rightarrow \Bch $ is a chain map. Define the \emph{homological order} of $f$, which we denote by $|f|_\h$, to be the maximal degree $d$ for which the cone $\cone( \Ach \xrightarrow{f} \Bch )$ is chain homotopy equivalent to a complex $\Cch$ that is contractible below homological degree $d$.
\end{definition}

Roughly speaking, $|f|_\h$ denotes the maximal homological degree through which $f$ gives a homotopy equivalence on the truncated complexes of $\Ach$ and $\Bch$ up to degree $|f|_\h$.

\begin{definition}\label{def:invSys}
	An \emph{inverse system} of chain complexes is a sequence of chain complexes equipped with chain maps
	$$\{\Ach_k,f_k\}=\Ach_0 \xleftarrow{f_0} \Ach_1 \xleftarrow{f_1} \Ach_2 \xleftarrow{f_2} \cdots.$$
	An inverse system is called \emph{Cauchy} if $|f_k|_\h \to \infty$ as $k \to \infty$.
\end{definition}

\begin{definition} \label{def:invLim}
An inverse system $\{\Ach_k, f_k\}$ has a \emph{limit} (or \emph{inverse limit}), which we denote by $\Ach_\infty$ or $\lim \Ach_k$, if there exist maps $\tilde{f}_k: \Ach_\infty \to \Ach_k$ such that
\begin{itemize}
 	\item $f_k\circ \tilde{f}_{k+1}=\tilde{f}_k$ for all $k\geq 0$, and
 	\item $|\tilde{f}_k|_\h \to \infty$ as $k \to \infty$. 
\end{itemize}
\end{definition}

\begin{theorem}[\cite{RozInf} Propositions 3.7 and 3.12]\label{thm:Cauchy}
An inverse system $\{\Ach_k,f_k\}$ of chain complexes has a limit $\Ach_\infty$ if and only if it is Cauchy.
\end{theorem}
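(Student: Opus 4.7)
My plan is to handle the two implications separately, using the triangulated structure on chain complexes modulo homotopy.

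For the direction ``limit exists $\Rightarrow$ Cauchy'', I would apply the octahedral axiom to the factorization $\tilde{f}_k = f_k \circ \tilde{f}_{k+1}$. This yields a distinguished triangle
\[
\cone(\tilde{f}_{k+1}) \longrightarrow \cone(\tilde{f}_k) \longrightarrow \cone(f_k) \longrightarrow \h^1\cone(\tilde{f}_{k+1}),
\]
exhibiting $\cone(f_k)$ as a mapping cone of the first two terms. A standard estimate then gives $|f_k|_\h \geq \min(|\tilde{f}_k|_\h,\ |\tilde{f}_{k+1}|_\h - 1)$, since the cone of a map between complexes contractible below degrees $a$ and $b$ is itself contractible below $\min(a, b-1)$ after accounting for the homological shift built into the cone construction. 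Both $|\tilde{f}_k|_\h$ and $|\tilde{f}_{k+1}|_\h$ tend to infinity by assumption, so $|f_k|_\h$ does as well.

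For the converse direction, I would construct $\Ach_\infty$ by hand. After passing to a subsequence (which is harmless for the existence question, since a limit of a subsequence of a Cauchy inverse system serves as a limit of the full sequence as well), I may assume $|f_k|_\h \geq k$. Next I replace each $\Ach_k$ by a chain homotopy equivalent model $\Ach_k'$ in which $f_k$ acts as the identity in all homological degrees below a bound $N_k$ with $N_k \to \infty$; this is possible because contractibility of $\cone(f_k)$ below $N_k$ means that matching acyclic summands can be split off and canceled between $\Ach_k$ and $\Ach_{k+1}$ in that range. With these normalized models, set $\Ach_\infty^i := (\Ach_k')^i$ for any $k$ with $N_k > i$; the differentials and projections $\tilde{f}_k$ then come directly from the $\Ach_k'$. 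By construction $f_k \circ \tilde{f}_{k+1} = \tilde{f}_k$, and $|\tilde{f}_k|_\h \geq N_k \to \infty$ since $\tilde{f}_k$ is the identity below degree $N_k$.

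The primary obstacle is the normalization step: arranging coherently that each $f_k$ becomes a strict identity in its range of degrees, while preserving the sequential composition structure across all $k$ simultaneously. A cleaner alternative, and likely the route taken in \cite{RozInf}, is to define $\Ach_\infty$ as a mapping telescope (homotopy inverse limit) of the $\Ach_k$'s. In that model one bypasses any explicit normalization; instead one checks directly that the Cauchy hypothesis forces only finitely many $\Ach_k$ to contribute to any fixed homological degree of the telescope, so the resulting complex is well-defined and its projections to the $\Ach_k$ automatically satisfy $|\tilde{f}_k|_\h \to \infty$. Uniqueness of the limit up to canonical chain homotopy equivalence then follows from a comparison of projections degree by degree.
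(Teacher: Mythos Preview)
The paper does not prove this theorem at all: it is stated as a citation of Propositions 3.7 and 3.12 from \cite{RozInf} and used as a black box throughout. There is therefore no proof in the paper to compare your proposal against.

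That said, your sketch is broadly reasonable and in the spirit of Rozansky's arguments. The forward direction via the octahedral axiom is standard. For the converse, your instinct that the telescope/homotopy-limit construction is the cleaner route is correct, and this is essentially what Rozansky does; your ``normalization'' alternative would require some care to make coherent across all $k$ simultaneously, as you yourself flag. One small point: your claim that passing to a subsequence is ``harmless for the existence question'' deserves a sentence of justification, since the definition of limit in this paper (Definition~\ref{def:invLim}) requires compatible maps $\tilde{f}_k$ to \emph{every} $\Ach_k$, not just to a cofinal subsystem; one recovers these by composing with finitely many system maps $f_k$, but this should be said.
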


Based on Definition \ref{def:invLim}, it is easy to see (see \cite{RozInf}) that limits to inverse systems are unique if they exist.  From this it is also easy to prove the following lemma, which will be used mainly for concatenations of finite and infinite braid complexes.

\begin{lemma}\label{lem:Lim of product}
Given an inverse system $\{\Ach_k, f_k\}$ with limit $\Ach_\infty$ and a complex $\Bch$, there is a corresponding inverse system $\{(\Bch \otimes \Ach_k), (I_{\Bch} \otimes f_k )\}$ with inverse limit satisfying
\[\lim (\Bch \otimes \Ach_k) \simeq \Bch \otimes \Ach_\infty.\]
In other words, the limiting process commutes with the tensor product.
\end{lemma}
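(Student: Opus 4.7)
The plan is to verify directly that $\Bch \otimes \Ach_\infty$, together with the maps $\tilde{g}_k := I_\Bch \otimes \tilde{f}_k$, serves as an inverse limit of the system $\{\Bch \otimes \Ach_k, I_\Bch \otimes f_k\}$; the uniqueness of inverse limits remarked on after Definition \ref{def:invLim} then upgrades this to the stated chain homotopy equivalence. The compatibility condition $(I_\Bch \otimes f_k) \circ \tilde{g}_{k+1} = \tilde{g}_k$ is immediate from functoriality of $\otimes$ together with $f_k \circ \tilde{f}_{k+1} = \tilde{f}_k$, so the real content lies in controlling homological orders.

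The key observation is that for any chain map $h$, there is a natural identification $\cone(I_\Bch \otimes h) \cong \Bch \otimes \cone(h)$. Thus if $\cone(h)$ is chain homotopy equivalent to a complex $\Cch$ that is contractible below degree $d$, then $\cone(I_\Bch \otimes h) \simeq \Bch \otimes \Cch$. Assuming $\Bch$ is bounded below by some degree $m$ (which is the relevant setting in the intended applications to finite braid complexes concatenated with semi-infinite ones), this yields an explicit null-homotopy on $(\Bch \otimes \Cch)^{< m+d}$ obtained by tensoring the identity on $\Bch$ with the null-homotopy on $\Cch^{<d}$. Consequently $|I_\Bch \otimes h|_\h \geq m + |h|_\h$.

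I would then apply this inequality in two places. First, with $h = f_k$: since $\{\Ach_k, f_k\}$ is Cauchy by Theorem \ref{thm:Cauchy}, $|f_k|_\h \to \infty$, hence $|I_\Bch \otimes f_k|_\h \to \infty$, so the new system is itself Cauchy and therefore admits a limit by Theorem \ref{thm:Cauchy}. Second, with $h = \tilde{f}_k$: from Definition \ref{def:invLim} we have $|\tilde{f}_k|_\h \to \infty$, so $|\tilde{g}_k|_\h = |I_\Bch \otimes \tilde{f}_k|_\h \to \infty$ as well. Together with the compatibility relation above, this shows $(\Bch \otimes \Ach_\infty, \{\tilde{g}_k\})$ satisfies both bullets of Definition \ref{def:invLim}, completing the identification with $\lim(\Bch \otimes \Ach_k)$.

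The main obstacle I expect is the boundedness-below hypothesis hiding in the passage from ``$\Cch$ contractible below degree $d$'' to ``$\Bch \otimes \Cch$ contractible below degree $m + d$'': without some such control on $\Bch$ the estimate on $|I_\Bch \otimes h|_\h$ simply fails (terms of $\Bch$ in arbitrarily negative degrees could pair with high-degree terms of $\Cch$ to populate arbitrarily low degrees of the tensor product). Checking that the intended use cases—concatenation with finite braid complexes, which are bounded—legitimately satisfy this hypothesis, and writing the null-homotopy argument cleanly so that it respects the ambient additive category structure, is the piece that requires care; the rest is bookkeeping.
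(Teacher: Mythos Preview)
Your proposal is correct and follows essentially the same approach as the paper: apply the functor $\Bch\otimes(\cdot)$ to the entire diagram (system maps $f_k$ together with the limit maps $\tilde f_k$), observe that homological orders survive, and invoke uniqueness of limits. The paper's proof is a terse two-sentence version of exactly this; your expansion of the step ``homological orders are preserved'' via $\cone(I_\Bch\otimes h)\cong\Bch\otimes\cone(h)$ and your explicit flagging of the bounded-below hypothesis on $\Bch$ are welcome clarifications that the paper leaves implicit.
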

\begin{proof}
Apply the functor $\Bch \otimes (\cdot)$ to the diagram comprised of the inverse system $\{\Ach_k, f_k\}$ together with $\Ach_\infty$ and the maps $\tilde{f}_k$.  Note that homological orders of maps are preserved, and then appeal to the uniqueness of the limit.
\end{proof}

We conclude this section with a result from \cite{MWGI} which allows us to prove two inverse systems have equivalent limits. 

\begin{proposition}[\cite{MWGI} Proposition 2.13] \label{prop:compSys}
	Suppose $\{\Ach_k,f_k\}$ and $\{\Bch_{\ell},g_{\ell}\}$ are two Cauchy inverse systems with limits $\Ach_\infty$ and $\Bch_\infty$ respectively. Let $z(\ell)$ be a nondecreasing function on $\N$ such that $\lim_{\ell\rightarrow\infty}z(\ell)=\infty$. Suppose there are maps $F_\ell : \Bch_\ell \to \Ach_{z(\ell)}$ forming a commuting diagram with the system maps $f_k$ and $g_{\ell}$ for appropriate $k$ and $\ell$. If $|F_\ell|_\h \to \infty$ as $\ell \to \infty$ then $\Ach_\infty \simeq \Bch_\infty$.
\end{proposition}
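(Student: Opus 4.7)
The plan has three steps: construct a candidate map $F_\infty: \Bch_\infty \to \Ach_\infty$, bound its homological order from below by any prescribed $d$, and conclude that it is a chain homotopy equivalence.

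To build $F_\infty$, I would use the universal property of the inverse limit $\Ach_\infty$. For each $\ell$, the composite $F_\ell \circ \tilde{f}_\ell^{\,B} : \Bch_\infty \to \Ach_{z(\ell)}$ (where $\tilde{f}_\ell^{\,B}$ denotes the map from $\Bch_\infty$ into $\Bch_\ell$ coming from Definition~\ref{def:invLim}) is compatible with the restriction of $\{\Ach_k, f_k\}$ to the subsequence indexed by the values $z(\ell)$: the commuting-diagram hypothesis yields $(f_{z(\ell)} \circ \cdots \circ f_{z(\ell+1)-1}) \circ F_{\ell+1} = F_\ell \circ g_\ell$, and then the relation $g_\ell \circ \tilde{f}_{\ell+1}^{\,B} = \tilde{f}_\ell^{\,B}$ from Definition~\ref{def:invLim} gives the needed compatibility. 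Because a subsequence of a Cauchy system is still Cauchy with the same limit, Theorem~\ref{thm:Cauchy} (combined with the uniqueness of the limit) provides a chain map $F_\infty: \Bch_\infty \to \Ach_\infty$ satisfying $\tilde{f}_{z(\ell)} \circ F_\infty \sim F_\ell \circ \tilde{f}_\ell^{\,B}$ for every $\ell$.

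For the homological-order estimate, fix $d \in \Z$ and choose $\ell$ so large that each of $|F_\ell|_\h$, $|\tilde{f}_\ell^{\,B}|_\h$, and $|\tilde{f}_{z(\ell)}|_\h$ exceeds $d$; this is possible by the hypothesis $|F_\ell|_\h \to \infty$ together with the defining conditions on the inverse limits. The homotopy-commutative square relating $F_\infty$, $F_\ell$, $\tilde{f}_\ell^{\,B}$, and $\tilde{f}_{z(\ell)}$ induces a chain map $\cone(F_\infty) \to \cone(F_\ell)$ whose components are $\tilde{f}_\ell^{\,B}$ and $\tilde{f}_{z(\ell)}$ plus a homotopy term. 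Packaging this diagram as a multicone and appealing to a ``two-out-of-three'' principle for homological order under mapping cones, one shows that $\cone(F_\infty)$ is chain homotopy equivalent to a complex contractible below degree $d$, so $|F_\infty|_\h \geq d$. Letting $d \to \infty$ gives $|F_\infty|_\h = \infty$, and under the standing assumption that the complexes in question are bounded below in homological degree (as is the case throughout our semi-infinite positive braid setup), this forces $\cone(F_\infty) \simeq 0$ and hence $F_\infty : \Bch_\infty \xrightarrow{\simeq} \Ach_\infty$.

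The main obstacle is the multicone argument in the second step: one needs a clean ``three edges good implies the fourth is good'' principle for homological order in a homotopy-commutative square, and extracting it requires more care than a naive diagram chase because the square commutes only up to homotopy. The cleanest implementation is to realize $\cone(F_\infty)$ as an explicit multicone built from $\Bch_\ell$, $\Ach_{z(\ell)}$, $\cone(\tilde{f}_\ell^{\,B})$, and $\cone(\tilde{f}_{z(\ell)})$, together with the chosen homotopy for the square, and then show that truncating below degree $d$ makes each summand contractible, using the three homological-order bounds. This formalism is essentially the one developed in \cite{RozInf} and adapted in the earlier works \cite{MWGI, MAMW}.
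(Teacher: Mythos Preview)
The paper does not prove this proposition; it cites \cite{MWGI} and only sketches the shape of the argument in the caption to Figure~\ref{fig:compare inv sys}.  Your outline matches that sketch in spirit, but it is organized inefficiently and Step~1 has a gap as written.

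In Step~1 you invoke Theorem~\ref{thm:Cauchy} together with uniqueness of limits to ``produce'' $F_\infty$.  But uniqueness only applies once you already know that $\Bch_\infty$, equipped with the maps $F_\ell\circ\tilde g_\ell$, is itself a limit of the subsystem $\{\Ach_{z(\ell)}\}$---and that is precisely the homological-order condition in Definition~\ref{def:invLim} which you have not yet checked.  So the existence of $F_\infty$ is not justified at this point; the argument is circular until Step~2 is complete.

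Conversely, once that order condition \emph{is} verified, you are already finished and the multicone square-chasing of Step~2 is superfluous.  The streamlined argument (which is what is actually carried out in \cite{MWGI}) runs as follows.  Your Step~1 computation shows that the composites $F_\ell\circ\tilde g_\ell:\Bch_\infty\to\Ach_{z(\ell)}$ are strictly compatible with the system maps.  The standard octahedral-type estimate $|\phi\circ\psi|_\h\geq\min(|\phi|_\h,|\psi|_\h)$ then gives
\[
|F_\ell\circ\tilde g_\ell|_\h \;\geq\; \min\bigl(|F_\ell|_\h,\,|\tilde g_\ell|_\h\bigr)\;\longrightarrow\;\infty,
\]
so $\Bch_\infty$ satisfies Definition~\ref{def:invLim} for the cofinal subsystem $\{\Ach_{z(\ell)}\}$.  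Since $\Ach_\infty$ is also a limit of that subsystem, uniqueness of limits yields $\Bch_\infty\simeq\Ach_\infty$ directly.  There is no need to construct $F_\infty$ first, no need for the two-out-of-three multicone analysis, and no need for the bounded-below hypothesis you introduce at the end (which is not part of the statement).
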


In Figure \ref{fig:compare inv sys} we include a diagram to better explain the situation in Proposition \ref{prop:compSys}.  In principal we could allow cases where $z(\ell+1)>z(\ell)+1$, necessitating the use of multiple system maps $f_k$ in the commutation required; in the cases of interest in this paper, however, this will never be the case and $z$ will only ever increase by one or not at all.

\begin{figure}[ht] 
\centering
\includegraphics[scale=.35]{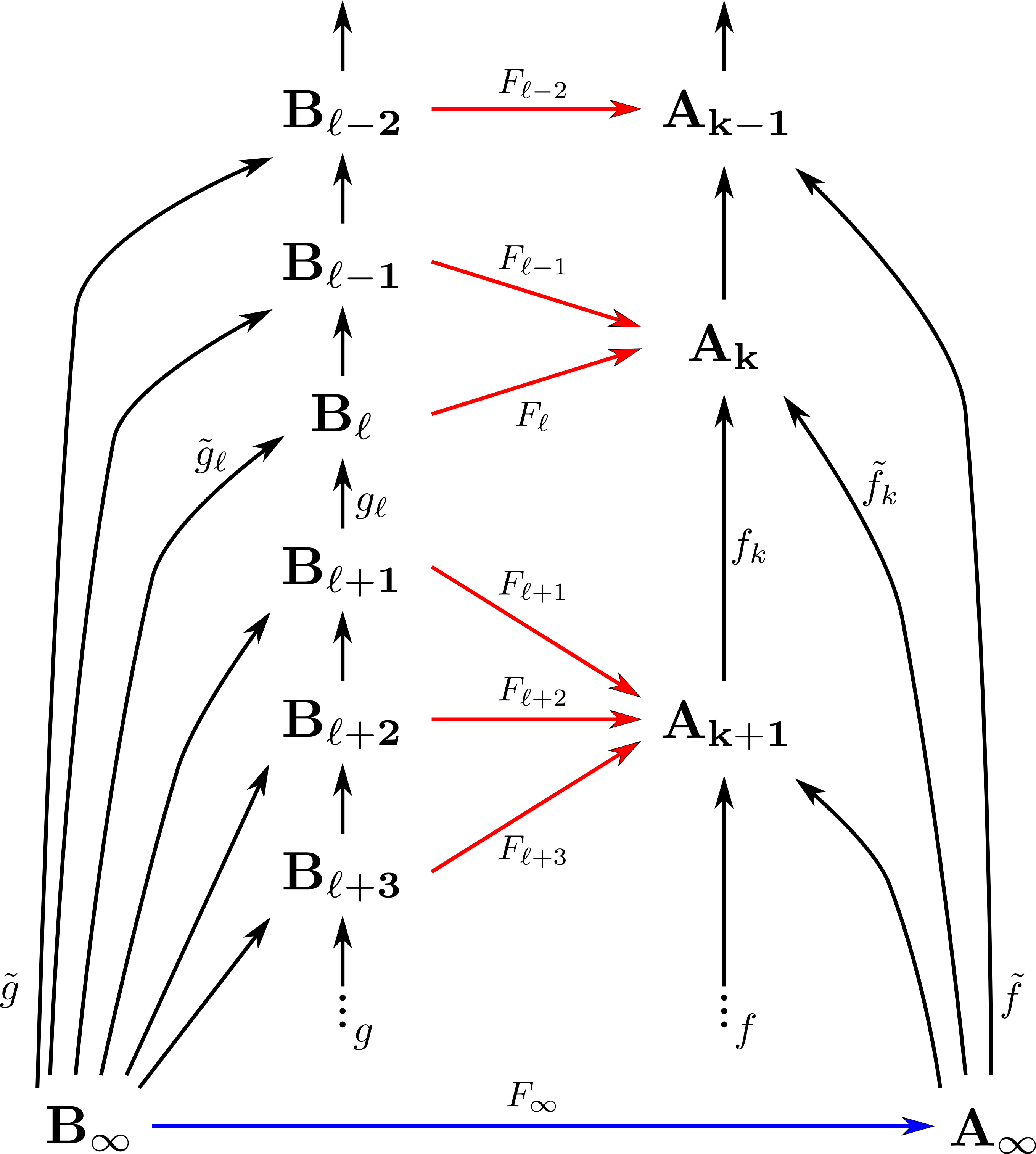}
\caption{The diagram for Proposition \ref{prop:compSys}. In this diagram both $\{\Ach_k,f_k\}$ and $\{\Bch_{\ell},g_{\ell}\}$ are Cauchy inverse systems. The limits $\Ach_\infty$ and $\Bch_\infty$ and the maps $\tilde{f},\tilde{f}_k,\tilde{g},\tilde{g_{\ell}}$ all exist from Theorem \ref{thm:Cauchy}. If we can find the maps $F_\ell$, then Theorem \ref{thm:Cauchy} also provides the map $F_\infty$. We simply need to show that $|F_\ell|_\h \to \infty$ as $\ell\to\infty$ to prove that $F_\infty$ is a homotopy equivalence.}
\label{fig:compare inv sys}
\end{figure}

 \subsection{Multicone complexes} \label{sec:multicones}
Many of the complexes that we will be concerned with in this paper are most easily understood and manipulated using a generalization of the usual cone construction as follows (a large portion of this section is taken nearly verbatim from \cite{MWs2s1}).

\begin{definition}\label{def:gen multicone def}
Suppose we are given the following data in a fixed category of chain complexes over some additive category:
\begin{itemize}
\item A finite index set $\C$ with a $\Z$-grading $\h_{\C}:\C\rightarrow\Z$.
\item For all $i\in\C$, a chain complex $\Ach_i$ with differential $d_i$.
\item For all $i,j\in\C$, a map (not necessarily a chain map) $f_{ij}:\Ach_i\rightarrow \h^{\h_{\C}(j)-\h_{\C}(i) -1} \Ach_j$ satisfying
\begin{itemize}
	\item $f_{ii}:=d_i$,
	\item for all $j\neq i$ in $\C$ with $\h_{\C}(j)\leq\h_{\C}(i)$, $f_{ij}:=0$, and
	\item for all $i,k\in\C$, $\sum_{j\in\C} f_{jk} f_{ij} = 0$.
\end{itemize}
\end{itemize}
Then we can form the \emph{multicone} 
\begin{equation}\label{eq:gen multicone eq}
\Mch=\underset{i,j\in\C}{\Mcone} \left( \Ach_i \xrightarrow{f_{ij}} \h^{\h_{\C}(j)-\h_{\C}(i) - 1} \Ach_j \right)
\end{equation}
 which is a chain complex $\Mch$ whose terms are the direct sum of all of the terms of the complexes $\Ach_i$
\[\Mch:=\bigoplus_{i\in\C} \Ach_i\]
and whose differential $d_M$ is the sum of all of the maps $f_{ij}$
\[d_M:=\sum_{i,j\in\C} f_{ij}.\]
For a term $\alpha\in \Ach_i\subset \Mch$, we determine the homological grading as the sum of the contributions of viewing $\alpha$ in $\Ach_i$ and viewing $\Ach_i$ in $\C$
\begin{equation}\label{eq:gen multicone hom deg}
\h_M(\alpha):= \h_{A_i}(\alpha) + \h_{\C}(i).
\end{equation}
\end{definition}
The reader may verify that this definition gives a well defined chain complex.  When $\h_\C(j)-\h_\C(i)=1$, the maps $f_{ij}$ assemble to define chain maps; when $\h_\C(j)-\h_\C(i)=2$, the maps $f_{ij}$ assemble to form null-homotopies for the compositions of any two of these chain maps; and so on.  Because the original system $\C$ was finite, this process must eventually end, and so of course the sum in the definition of $d_M$ is finite.  When $\C=\{1,2\}$ and we have the single chain map $f_{12}:\Ach_1\rightarrow \Ach_2$, this construction recovers the usual cone on $f_{12}$.

\begin{remark}
We employ the term `multicone' in Definition \ref{def:gen multicone def} following \cite{RozS1S2}.  A complex built in this manner is also often referred to as a \emph{totalization} or \emph{convolution} of a \emph{twisted complex}.  See for instance \cite{BK}.
\end{remark}

Note that \emph{any} finite chain complex $\Bch$ can be represented as a multicone by declaring that $\C$ is indexed by the terms in $\Bch$ while the maps $f_{i(i+1)}$ are given by the differentials of $\Bch$.  Meanwhile all of the maps $f_{ij}$ with $\h_\C(j)-\h_\C(i)\geq2$ are zero maps (ie no homotopies are needed).

The following proposition gives us our main tool for manipulation of multicone complexes.

\begin{proposition}\label{prop:gen multicone equiv}
Given a chain complex presented as a multicone $\Mch$ as in Equation \eqref{eq:gen multicone eq}, and given chain homotopy equivalences $\iota_i:\Ach_i\rightarrow \Ach_i'$ for each $i\in\C$, there exist maps $f_{ij}'$ such that we can form the multicone
\[\Mch':=\underset{i,j\in\C}{\Mcone} \left( \Ach'_i \xrightarrow{f_{ij}'} \h^{\h_{\C}(j)-\h_{\C}(i) - 1} \Ach_j' \right)\]
that is chain homotopy equivalent to $\Mch$:
\[\Mch\simeq \Mch'.\]
\end{proposition}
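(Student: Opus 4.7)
The plan is to induct on $|\C|$, using the observation that a multicone can be peeled off one layer at a time as an ordinary mapping cone. The base case $|\C|=1$ is trivial: $\Mch = \Ach_1$, $\Mch' = \Ach_1'$, and $\iota_1$ itself is the required equivalence.

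For the inductive step, I would pick $i_0 \in \C$ with $\h_{\C}(i_0)$ maximal and set $\C' := \C \setminus \{i_0\}$. Since maximality forces $f_{i_0 j} = 0$ for every $j \in \C'$, the restrictions $\{f_{ij}\}_{i,j \in \C'}$ still satisfy the multicone identities and therefore assemble into a sub-multicone $\Mch_{\C'}$. The remaining identities $\sum_{k \in \C} f_{k i_0} f_{ik} = 0$ for $i \in \C'$ say exactly that $\{f_{j i_0}\}_{j \in \C'}$ assembles into a chain map $F : \Mch_{\C'} \to \h^{\h_{\C}(i_0)-1}\Ach_{i_0}$, and the original multicone $\Mch$ is canonically isomorphic to $\cone(F)$ (with a global shift absorbed). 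By the inductive hypothesis applied to $\Mch_{\C'}$ and the equivalences $\{\iota_i\}_{i \in \C'}$, there is a multicone $\Mch_{\C'}'$ on the $\Ach_i'$ ($i \in \C'$) with some maps $f'_{ij}$, together with a chain homotopy equivalence $\phi : \Mch_{\C'} \to \Mch_{\C'}'$ and a homotopy inverse $\psi$.

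Next I would transfer $F$ across these equivalences. The composition $\iota_{i_0} \circ F \circ \psi$ is a chain map $\Mch_{\C'}' \to \h^{\h_{\C}(i_0)-1}\Ach_{i_0}'$; call it $F'$, and define $\Mch'$ to be $\cone(F')$. Decomposing $F'$ by its source components $\Ach_j' \to \h^{\h_{\C}(i_0)-\h_{\C}(j)-1} \Ach_{i_0}'$ produces the new maps $f'_{j i_0}$, and together with the $f'_{ij}$ from the inductive hypothesis these exhibit $\Mch'$ as a multicone in the form demanded by \eqref{eq:gen multicone eq}; the remaining multicone identities follow automatically from the fact that $\cone(F')$ is a chain complex. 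To conclude $\Mch \simeq \Mch'$, I would apply the standard fact that in a square of chain maps commuting up to homotopy, with chain homotopy equivalences on both vertical sides, the induced map on mapping cones is a chain homotopy equivalence; here the vertical maps are $\phi$ and $\iota_{i_0}$, and the square commutes up to homotopy by construction.

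The main obstacle is the bookkeeping: one must track the homological shifts $\h^{\h_{\C}(j)-\h_{\C}(i)-1}$ carefully to see that the cone construction correctly reproduces the grading convention \eqref{eq:gen multicone hom deg}, and one must ensure the square above commutes strictly (not just up to homotopy) before invoking the cone lemma --- or, equivalently, pass from $\iota_{i_0} F \psi$ to a genuine chain map by absorbing the homotopy $\iota_{i_0}F - F\phi \circ \psi$ into a modification of $F'$ that does not change its homotopy class. A clean alternative would be to quote the homological perturbation lemma applied to the direct-sum SDR $\bigoplus \Ach_i \leftrightarrow \bigoplus \Ach_i'$ with perturbation $\sum_{i\neq j} f_{ij}$, whose transfer series terminates because $\C$ is finite and off-diagonal components strictly increase $\h_{\C}$; this yields the new maps $f'_{ij}$ and the equivalence $\Mch \simeq \Mch'$ in one stroke.
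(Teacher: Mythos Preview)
Your inductive argument is correct, and in fact it supplies considerably more detail than the paper does: the paper's entire proof is a one-line citation, stating that this is ``a standard result generalizing the fact that the homotopy category of complexes over an additive category is triangulated (Proposition 2 in \cite{BK}).'' Your peeling-off-one-layer induction is precisely the natural way to unpack that citation, since the inductive step is exactly an application of the triangulated axiom that cones of morphisms related by isomorphisms (in the homotopy category) are themselves isomorphic.

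One small comment: your caution about the square commuting strictly versus up to homotopy is unnecessary. The fact you invoke --- that a homotopy-commutative square with homotopy-equivalence verticals yields a homotopy equivalence on cones --- is itself the standard triangulated-category fact, and it holds as stated without needing to massage the square into a strictly commuting one. So there is no genuine obstacle there. Your alternative via the homological perturbation lemma is also correct and has the practical advantage of producing explicit formulas for the $f'_{ij}$, which the bare triangulated-category argument does not.
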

\begin{proof} This is a standard result generalizing the fact that the homotopy category of complexes over an additive category is triangulated (Proposition 2 in \cite{BK}).
\end{proof}

\section{Colored $\mathfrak{sl}(N)$ link homology background}\label{sec:KR background}

\subsection{The $\mathfrak{sl}(N)$ foam category and colored Khovanov-Rozansky homology}\label{sec:KR definitions}

Colored Khovanov-Rozansky homology was first constructed independently by Wu \cite{Wu} and Yonezawa \cite{Yonez}. This homology theory generalizes the original construction of Khovanov and Rozansky \cite{KR} and categorifies the colored $SL(N)$ polynomial when coloring components by fundamental representations. Queffelec and Rose gave a combinatorial/geometric construction of colored Khovanov-Rozansky homology in terms of `webs' and `foams' \cite{QR}. It is this construction which we will briefly recall here (again, a large portion of this review is taken nearly verbatim from \cite{MAMW}). 

We begin with the category $\webC$, as described by Cautis, Kamnitzer, and Morrison in \cite{CKM}.  The objects of $\webC$ are given by sequences $\gamma = (\gamma_1,...,\gamma_\ell)$ where $\ell > 0$ and $\gamma_i \in \{0,1,...,N\}$ called \emph{colorings}, together with a zero object.  The 1-morphisms are formal sums of upward oriented trivalent graphs with edges labeled by integers in the same coloring set $\{0,1,...,N\}$.  At any vertex, the labels of the two edges of similar orientation (incoming or outgoing) are required to sum up to give the label of the third edge.  Such graphs are generated by the basic graphs in Figure \ref{fig:basicWebs}.  There is also a set of local relations for such trivalent graphs; we list a few of the most important ones in Figure \ref{fig:webRel}, and refer the reader to \cite{CKM} for a complete list.


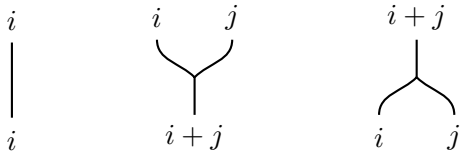
\begin{figure}[ht]
\[
\begin{tikzpicture} 
\draw[thick] (0,0) -- (0,1);
\node at (0,1.3){$i$};
\node at (0,-.3){$i$};
\end{tikzpicture}
\qquad
\qquad
\begin{tikzpicture} 
\draw[thick] (0,0) -- (0,.5) to[out=45,in=-90] (.5,1)
(-.5,1) to[out=-90,in=90+45] (0,.5);
\node at (-.5,1.3){$i$};
\node at (.5,1.3){$j$};
\node at (0,-.3){$i+j$};
\end{tikzpicture}
\qquad
\qquad
\begin{tikzpicture} 
\draw[thick] (0,1) -- (0,.5) to[out=-45,in=90] (.5,0)
(-.5,0) to[out=90,in=180+45] (0,.5);
\node at (-.5,-.3){$i$};
\node at (.5,-.3){$j$};
\node at (0,1.3){$i+j$};
\end{tikzpicture}
\]

\caption{Basic $\sln$-webs (all edges are oriented upwards).}
\label{fig:basicWebs}	
\end{figure}

\begin{figure}[ht]
\[
\begin{tikzpicture}[x=2em,y=2em,baseline={([yshift=-1ex]current bounding box.center)}]
\draw[thick] (0,0) -- (0,.5) to[out=45,in=-90] (.5,1) to[out=90,in=-45] (0,1.5) -- (0,2)
(0,1.5) to[out=180+45,in=90] (-.5,1) to[out=-90,in=90+45] (0,.5);
\smnode at (-.8,1){$i$};
\smnode at (.8,1){$j$};
\smnode at (0,-.3){$i+j$};
\smnode at (0,2.3){$i+j$};
\end{tikzpicture}
\qq{=} \qbinom{i+j}{i} 
\begin{tikzpicture}[x=2em,y=2em,baseline={([yshift=-1ex]current bounding box.center)}]
\draw[thick](0,0)--(0,2);
\smnode at (0,-.3){$i+j$};
\smnode at (0,2.3){$i+j$};
\end{tikzpicture}
\]

\[
\begin{tikzpicture}[x=2em,y=-2em,baseline={([yshift=-1ex]current bounding box.center)}]
\URrungBL[0]{2}{1}{}
\ULrungBR[1]{2}{1}{$\ell$}
\smnode at (0,-.3){$i+k$};
\smnode at (1,-.3){$j$};
\smnode at (0,2.3){$i$};
\smnode at (1,2.3){$j+k$};
\end{tikzpicture}
\qq{=} \sum_{p=\max(0,k)}^\ell \qbinom{j-i}{\ell-p}
\begin{tikzpicture}[x=2em,y=-2em,baseline={([yshift=-1ex]current bounding box.center)}]
\ULrungBR[0]{2}{1}{$p$}
\URrungBL[1]{2}{1}{}
\smnode at (0,-.3){$i+k$};
\smnode at (1,-.3){$j$};
\smnode at (0,2.3){$i$};
\smnode at (1,2.3){$j+k$};
\end{tikzpicture}
\]

\[[n] = \dfrac{q^n-q^{-n}}{q-q^{-1}}, \quad [n]! = [n][n-1]\cdots[1],\quad \qbinom{n}{k} = \dfrac{[n]!}{[n-k]![k]!}\]
\caption{Selected relations for $\sln$-webs (all edges are oriented upwards).  See \cite{CKM} for a full list.}
\label{fig:webRel}
\end{figure}

We interpret any such graph as a mapping between the coloring at the bottom to the coloring at the top. These graphs are called \emph{$\sln$-webs} due to their relation to the representation theory of $\qsln$. Sometimes we will omit edges labeled by $0$ and $N$, but allowing these labels in the definition will make later definitions easier to write. By convention, we will also allow edges labeled by integers larger than $N$ for the sake of later definitions. However, any web with such an edge will be set equal to the zero web (that is, the unique morphism factoring through the zero object). 

We now move on to the $2$-category $\foamC$.  The objects and 1-morphisms of $\foamC$ are the same as those for $\webC$, except that the 1-morphisms are considered as direct sums rather than genuine sums, and the relations between such 1-morphisms are discarded for the moment.  The 2-morphisms are matrices of labeled singular cobordisms between $\sln$-webs, called $\sln$-foams. These cobordisms are generated by the basic cobordisms in Figure \ref{fig:basicFoams}.

\begin{figure}[ht]
\include{BasicFoams}
\caption{The basic generating $\sln$-foams in $\foamC$ (Image from \cite{QR}).}
\label{fig:basicFoams}
\end{figure}

Similar to the convention for $\sln$-webs, we will interpret $\sln$-foams as mapping from the bottom boundary to the top boundary. Each facet of an $\sln$-foam is labeled with an element of $\{0,1,...,N\}$.  Any facet whose boundary is shared with an edge of a web must share the same label as that edge of the web.  We allow decorations $\bullet_p$ on the facets of the foams where $p$ is a symmetric polynomial in a number of variables equal to the label of the facet.  There also exists a set of local relations for these $\sln$-foams which allow for a lifting of the web relations in $\webC$ (such as those in Figure \ref{fig:webRel}) to 2-isomorphisms between the corresponding 1-morphisms in $\foamC$.  The reader should consult \S3 of \cite{QR} for more details. 

The 2-morphisms in $\foamC$ satisfy an adjunction equality in the following sense.  If $\delta_a$ and $\delta_b$ are two $\sln$-webs (1-morphisms in $\foamC$), then we have an isomorphism of 2-morphism spaces
\begin{equation}\label{eq:2mor duality}
\hom_2(\delta_a,\delta_b) \cong \hom_2(\delta_a\cdot\delta_b^\wedge,I).
\end{equation}
Here $\hom_2(\cdot,\cdot)$ denotes the 2-morphism spaces, $I$ denotes the relevant colored identity diagram, and $\delta_b^\wedge$ denotes the \emph{dual} of $\delta_b$ obtained from $\delta_b$ by reflection about a horizontal line.  The $\cdot$ notation in Equation \eqref{eq:2mor duality} indicates contatenation of $\sln$-webs, which plays the role of (monoidal) tensor product.  An illustration of this isomorphism will be provided in Figure \ref{fig:duality iso} for the more general setting of complexes over $\foamC$.

The category $\foamC$ also admits a grading. We will note the gradings in cases that it is necessary, but will once again refer the reader to \cite{QR} for more complete information. All chain complexes of foams are assumed to have degree 0 differentials. We will denote grading shifts in $\foamC$ with the notation $\q^k$ for a grading shift upwards by $k$. 

To any tangle diagram $T$ whose components are labeled by elements of $\{0,1,...,N\}$, we can associate a chain complex in $\foamC$, which we will denote by $\KC{T}$. The homotopy equivalence type of $\KC{T}$ is an isotopy invariant of the tangle $T$. In diagrams and figures, we will often omit the notation $\KC{\cdot}$ and simply draw the corresponding tangle or web unless there is a chance for confusion. In this text we will exclusively focus on the case that the tangle $T$ is actually a braid.

To build a chain complex in $\foamC$ for a colored braid (by convention we orient all of the strands upwards), we construct basic chain complexes for each crossing. Suppose that $i \leq j$, then

\begin{equation}\label{eq:posxDef}
\begin{tikzpicture}[x=2em,y=-2.5em,baseline={([yshift=-2ex]current bounding box.center)}]
\Bsigma[0]{2}{1}
\smnode at (0,-.3){$j$};
\smnode at (1,-.3){$i$};
\smnode at (0,1.3){$i$};
\smnode at (1,1.3){$j$};
\end{tikzpicture}
:= \qquad
\begin{tikzpicture}[x=2em,y=-1.5em,baseline={([yshift=-2ex]current bounding box.center)}]
\smnode at (0,-.3){$j$};
\smnode at (1,-.3){$i$};
\smnode at (0,2.3){$i$};
\smnode at (1,2.3){$j$};
\ULrungBR[0]{2}{1}{}
\URrungBL[1]{2}{1}{$0$}
\end{tikzpicture}
\xlongrightarrow{d_0} \q
\begin{tikzpicture}[x=2em,y=-1.5em,baseline={([yshift=-2ex]current bounding box.center)}]
\smnode at (0,-.3){$j$};
\smnode at (1,-.3){$i$};
\smnode at (0,2.3){$i$};
\smnode at (1,2.3){$j$};
\ULrungBR[0]{2}{1}{}
\URrungBL[1]{2}{1}{$1$}
\end{tikzpicture}
\xlongrightarrow{d_1} \cdots \xlongrightarrow{d_{i-1}}
\q^i
\begin{tikzpicture}[x=2em,y=-1.5em,baseline={([yshift=-2ex]current bounding box.center)}]
\smnode at (0,-.3){$j$};
\smnode at (1,-.3){$i$};
\smnode at (0,2.3){$i$};
\smnode at (1,2.3){$j$};
\ULrungBR[0]{2}{1}{}
\URrungBL[1]{2}{1}{$i$}
\end{tikzpicture}
\end{equation}

\begin{equation}\label{eq:negxDef}
\begin{tikzpicture}[x=2em,y=-2.5em,baseline={([yshift=-2ex]current bounding box.center)}]
\Bsigmainv[0]{2}{1}
\smnode at (0,-.3){$j$};
\smnode at (1,-.3){$i$};
\smnode at (0,1.3){$i$};
\smnode at (1,1.3){$j$};
\end{tikzpicture}
:= \qquad
\begin{tikzpicture}[x=2em,y=-1.5em,baseline={([yshift=-2ex]current bounding box.center)}]
\smnode at (0,-.3){$j$};
\smnode at (1,-.3){$i$};
\smnode at (0,2.3){$i$};
\smnode at (1,2.3){$j$};
\ULrungBR[0]{2}{1}{}
\URrungBL[1]{2}{1}{$i$}
\end{tikzpicture}
\xlongrightarrow{d'_{i-1}} \q
\begin{tikzpicture}[x=2em,y=-1.5em,baseline={([yshift=-2ex]current bounding box.center)}]
\smnode at (0,-.3){$j$};
\smnode at (1,-.3){$i$};
\smnode at (0,2.3){$i$};
\smnode at (1,2.3){$j$};
\ULrungBR[0]{2}{1}{}
\URrungBL[1]{2}{1}{$i-1$}
\end{tikzpicture}
\xlongrightarrow{d'_{i-2}} \cdots \xlongrightarrow{d'_0}
\q^i
\begin{tikzpicture}[x=2em,y=-1.5em,baseline={([yshift=-2ex]current bounding box.center)}]
\smnode at (0,-.3){$j$};
\smnode at (1,-.3){$i$};
\smnode at (0,2.3){$i$};
\smnode at (1,2.3){$j$};
\ULrungBR[0]{2}{1}{}
\URrungBL[1]{2}{1}{$0$}
\end{tikzpicture}
\end{equation}
Trivalent graphs of the form illustrated on the right hand side of Equations \eqref{eq:posxDef} and \eqref{eq:negxDef} will be referred to as \emph{ladders}; the nearly horizontal slanted edges will be referred to as \emph{rungs}.  The term furthest to the left is taken to have homological degree zero, and the differential has homological degree $1$.  The symbol $\q$ is used to denote a shift in the internal quantum grading; we will often omit this shift as it will have no bearing on our arguments.  We remark that the labeled edges determine all edges in each web, and that certain webs may be zero webs if they have a label larger than $N$.

The maps $d_k$ are degree $0$ foams as specified in \cite{QR}. The maps $d_k'$ are the same foams, but reflected to switch the source and target webs. Finally, if $i > j$, then we reflect each web around a vertical axis and perform the analogous transformation to the foams $d_k$ and $d'_k$.

We note that our conventions differ from those of \cite{QR}. In particular, our $\KC{\posxji}$ differs from their definition of $\KC{\posxji}$ by a shift of $\h^i\q^i$ (where $i \leq j$). However, our convention makes studying stabilization behavior more straightforward.  We remark that under our convention, Reidemeister I and II moves hold only up to a shift (see \cite{MAMW}).  In particular, a Reidemeister II move incurs a shift of $\h^t$ where $t$ is the minimum between the two colors involved.  Reidemeister I shifts will not be relevant for us in this paper.

In order to complete the definition of $\KC{T}$ for a positive braid $T$, we take the planar tensor product of the various $\KC{\posxji}$ and $\KC{\negxji}$ for each crossing in $T$ in the usual sense of Bar-Natan's planar algebras \cite{BN}; that is, we stitch together the various webs and foams while taking the tensor product of the corresponding complexes.  Thus, if we choose to apply Equations \eqref{eq:posxDef} and \eqref{eq:negxDef} to only certain crossings within a given braid, we will construct multicone complexes where each term involves ladder diagrams within a larger braid-like diagram (see Figure \ref{fig:braid as mcone ex} for an example illustrating this notion and our notation).  We refer to such diagrams as \emph{web-braid diagrams} ($\sln$-webs generically embedded in a 3-ball with $n$ incoming and $n$ outgoing strands, projected to the plane with no turnbacks present).

\begin{figure}
\begin{align*}
\begin{tikzpicture}[x=2em,y=-2em,baseline={([yshift=-2ex]current bounding box.center)}]
\Bsigma[0]{2}{1}
\Bsigma[1]{2}{1}
\smnode at (0,-.3){i};
\smnode at (1,-.3){j};
\smnode at (0,2.3){i};
\smnode at (1,2.3){j};
\end{tikzpicture}
\qq{&\simeq}
\begin{tikzpicture}[x=2em,y=-2em,baseline={([yshift=-2ex]current bounding box.center)}]
\Bsigma[0]{2}{1}
\ULrungBR[1]{2}{1}{}
\URrungBL[2]{2}{1}{$0$}
\smnode at (0,-.3){i};
\smnode at (1,-.3){j};
\smnode at (0,3.3){i};
\smnode at (1,3.3){j};
\end{tikzpicture}
\qq{\longrightarrow}
\begin{tikzpicture}[x=2em,y=-2em,baseline={([yshift=-2ex]current bounding box.center)}]
\Bsigma[0]{2}{1}
\ULrungBR[1]{2}{1}{}
\URrungBL[2]{2}{1}{$1$}
\smnode at (0,-.3){i};
\smnode at (1,-.3){j};
\smnode at (0,3.3){i};
\smnode at (1,3.3){j};
\end{tikzpicture}
\qq{\longrightarrow}
\begin{tikzpicture}[x=2em,y=-2em,baseline={([yshift=-2ex]current bounding box.center)}]
\Bsigma[0]{2}{1}
\ULrungBR[1]{2}{1}{}
\URrungBL[2]{2}{1}{$2$}
\smnode at (0,-.3){i};
\smnode at (1,-.3){j};
\smnode at (0,3.3){i};
\smnode at (1,3.3){j};
\end{tikzpicture}
\qq{\longrightarrow}
\cdots
\qq{\longrightarrow}
\begin{tikzpicture}[x=2em,y=-2em,baseline={([yshift=-2ex]current bounding box.center)}]
\Bsigma[0]{2}{1}
\ULrungBR[1]{2}{1}{}
\URrungBL[2]{2}{1}{$i$}
\smnode at (0,-.3){i};
\smnode at (1,-.3){j};
\smnode at (0,3.3){i};
\smnode at (1,3.3){j};
\end{tikzpicture}
\\
\qq{&\simeq}
\underset{\delta_x,\delta_y\in\KC{\tau_{(ij)}}}{\Mcone}\left( 
\begin{tikzpicture}[x=2em,y=-2em,baseline={([yshift=-2ex]current bounding box.center)}]
\Bsigma[0]{2}{1}
\Bbox[1]{2}{1}{2}{$\delta_x$}
\end{tikzpicture}
\qq{\longrightarrow}
\begin{tikzpicture}[x=2em,y=-2em,baseline={([yshift=-2ex]current bounding box.center)}]
\Bsigma[0]{2}{1}
\Bbox[1]{2}{1}{2}{$\delta_y$}
\end{tikzpicture}
\right)
\end{align*}
\caption{The simplest example of viewing $\KC{T}$ as a multicone along a chosen set of crossings (in this case, along one crossing denoted $\tau_{(ij)}$).  The second line illustrates the multicone notation we will use for such a complex.  The symbols $\delta_x,\delta_y$ denote diagrams appearing in the complex $\KC{\tau_{(ij)}}$.  We omit the $\KC{\cdot}$ notation and the various grading shifts for simplicity.}
\label{fig:braid as mcone ex}
\end{figure}
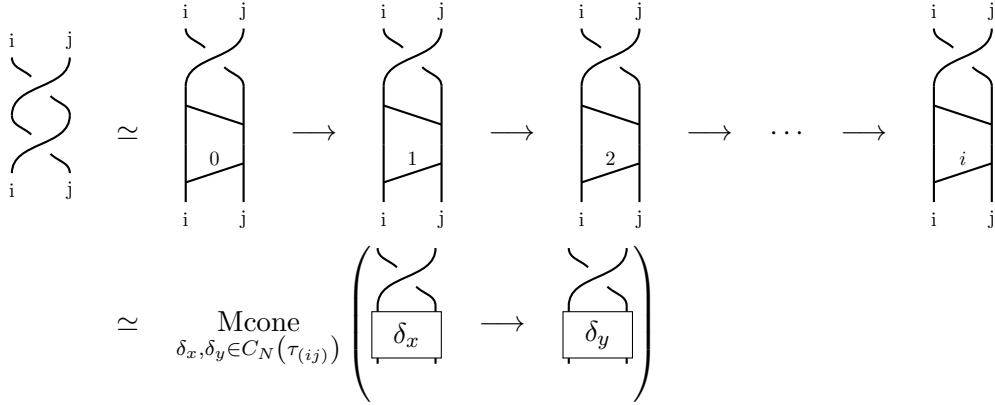

The category of complexes over $\foamC$ admits an adjunction equality much like that of Equation \eqref{eq:2mor duality}.  Given two tangles (or more generally, two web-braid diagrams) $T_1$ and $T_2$, the space of chain maps $\hom(\KC{T_1},\KC{T_2})$ between their corresponding complexes is itself a complex, and satisfies
\begin{equation}\label{eq:chain cx duality}
\hom(\KC{T_1},\KC{T_2}) \simeq \hom(\KC{T_1\cdot T_2^\wedge},\KC{I})
\end{equation}
where once again $T_2^\wedge$ is obtained from $T_2$ by a horizontal reflection, and $I$ is the relevant (colored) identity tangle.  We illustrate this diagrammatically in Figure \ref{fig:duality iso}.

\begin{figure}
\[
\hom\left(
\quad
\begin{tikzpicture}[x=2em,y=-2em,baseline={([yshift=-2ex]current bounding box.center)}]
\Bboxst[0]{$B$}
\smnode at (-.3,0){$\gamma$};
\smnode at (-.3,1){$\gamma'$};
\end{tikzpicture}
\qq{,}
\begin{tikzpicture}[x=2em,y=-2em,baseline={([yshift=-2ex]current bounding box.center)}]
\Bboxst[0]{$A$}
\smnode at (-.3,0){$\gamma$};
\smnode at (-.3,1){$\gamma'$};
\end{tikzpicture}
\quad
\right)
\qq{\simeq}
\hom\left(
\quad
\begin{tikzpicture}[x=2em,y=-2em,baseline={([yshift=-2ex]current bounding box.center)}]
\Bboxst[0]{$B$}
\Bboxst[1]{\scalebox{1}[-1]{$A$}}
\smnode at (-.3,0){$\gamma$};
\smnode at (-.3,1){$\gamma'$};
\smnode at (-.3,2){$\gamma$};
\end{tikzpicture}
\qq{,}
\begin{tikzpicture}[x=2em,y=-2em,baseline={([yshift=-2ex]current bounding box.center)}]
\foreach \j in {0,1}
 { \Bsigma[\j]{2}{0} }
\smnode at (-.3,0){$\gamma$};
\smnode at (-.3,2){$\gamma$};
\end{tikzpicture}
\quad
\right)
\]
\caption{A diagrammatic example of Equation \eqref{eq:chain cx duality} using two web-braid diagrams which we denote $B$ and $A$, both having coloring $\gamma$ at the top and coloring $\gamma'$ at the bottom.  The complex of chain maps between them is itself chain homotopy equivalent to the complex of maps from  $\KC{B\cdot A^\wedge}$ to the $\gamma$-colored identity tangle.  The $\KC{\cdot}$ notation has been omitted.}
\label{fig:duality iso}
\end{figure}

\subsection{Manipulating some basic complexes in $\foamC$}
Given a web-braid diagram, we may `slide' and/or `twist' rungs past various crossings (see Figures \ref{fig:rungslide} and \ref{fig:rungtwist}).  In such cases, the colors involved at the crossings that have been passed by will change and we will need to understand how this affects the resulting complex.  The following proposition is proved in \cite{MAMW}. 

\begin{figure}[ht]
\[
\begin{tikzpicture}[x=2em,y=-2em,baseline={([yshift=-2ex]current bounding box.center)}]
\Bsigma[0]{3}{2}
\Bsigma[1]{3}{1}
\URrungBL[2]{3}{2}{$k$}
\smnode at (0,-.3){$j-k$};
\smnode at (1,-.3){$i+k$};
\smnode at (2,-.3){$\ell$};
\smnode at (0,3.3){$\ell$};
\smnode at (1,3.3){$j$};
\smnode at (2,3.3){$i$};
\end{tikzpicture}
\qq{\longleftrightarrow}
\h^t
\begin{tikzpicture}[x=2em,y=-2em,baseline={([yshift=-2ex]current bounding box.center)}]
\URrungBL[0]{3}{1}{$k$}
\Bsigma[1]{3}{2}
\Bsigma[2]{3}{1}
\smnode at (0,-.3){$j-k$};
\smnode at (1,-.3){$i+k$};
\smnode at (2,-.3){$\ell$};
\smnode at (0,3.3){$\ell$};
\smnode at (1,3.3){$j$};
\smnode at (2,3.3){$i$};
\end{tikzpicture}
\]
\caption{Sliding a rung under a strand.  There are similar moves sliding any rung over or under any strand.  Any such move incurs a homological shift in the corresponding complex according to Proposition \ref{prop:isotopyShifts}.  There is also a $q$-degree shift which we will ignore.}
\label{fig:rungslide}
\end{figure}
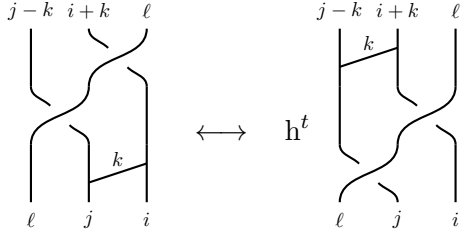

\begin{figure}[ht]
\[
\begin{tikzpicture}[x=2em,y=-2em,baseline={([yshift=-2ex]current bounding box.center)}]
\Bsigma[0]{2}{1}
\URrungBL[1]{2}{1}{$k$}
\smnode at (0,-.3){$j+k$};
\smnode at (1,-.3){$i-k$};
\smnode at (0,2.3){$i$};
\smnode at (1,2.3){$j$};
\end{tikzpicture}
\qq{\longleftrightarrow}
\h^t
\begin{tikzpicture}[x=2em,y=-2em,baseline={([yshift=-2ex]current bounding box.center)}]
\ULrungBR[0]{2}{1}{$k$}
\Bsigma[1]{2}{1}
\smnode at (0,-.3){$j+k$};
\smnode at (1,-.3){$i-k$};
\smnode at (0,2.3){$i$};
\smnode at (1,2.3){$j$};
\end{tikzpicture}
\]
\caption{Twisting a rung through a crossing.  There is a similar move for a rung slanted in the opposite direction as in the diagrams of Figure \ref{fig:braid as mcone ex}.  Any such move incurs a homological shift in the corresponding complex according to Proposition \ref{prop:isotopyShifts}.  There is also a $q$-degree shift which we will ignore.}
\label{fig:rungtwist}
\end{figure}
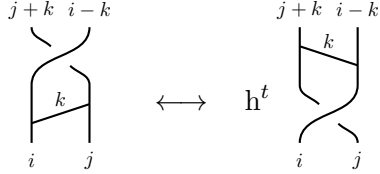

\begin{proposition}[\cite{MAMW} Proposition 2.19]\label{prop:isotopyShifts}
Let $T_1$ and $T_2$ be any two colored positive web-braid diagrams such that we can transform $T_1$ into $T_2$ via a sequence of braid-like Reidemeister moves, rung slides (see Figure \ref{fig:rungslide}) and rung twists (see Figure \ref{fig:rungtwist}).  Then $\KC{T_1}\simeq \h^t \q^s \KC{T_2}$ where
\begin{equation}\label{eq:isotopyShifts}
t = \sum_{\tau\in T_1} \min(\tau) - \sum_{\chi\in T_2} \min(\chi).
\end{equation}
Here each sum is taken over all crossings $\tau,\chi$ in each diagram.  The notation $\min(\tau)$ denotes the minimal color amongst the two strands at the crossing $\tau$.
\end{proposition}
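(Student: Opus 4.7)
The plan is to reduce to elementary moves and verify the formula locally, then telescope. Since any allowed transformation from $T_1$ to $T_2$ decomposes as a sequence $T_1 = S_0 \to S_1 \to \cdots \to S_m = T_2$ where each $S_k \to S_{k+1}$ is a single braid-like Reidemeister move, rung slide, or rung twist, and since both chain homotopy equivalence and the shift quantity $\sum_\tau \min(\tau)$ are additive under composition (the intermediate sums telescope), it suffices to prove $\KC{S_k} \simeq \h^{t_k} \q^{s_k} \KC{S_{k+1}}$ with $t_k$ controlled by the change in $\sum_\tau \min(\tau)$ across each elementary step.

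For braid-like Reidemeister III, both sides contain three crossings whose colors are merely permuted, so $t = 0$ and the equivalence is just the braid-like R3 invariance of colored Khovanov-Rozansky homology proved in \cite{QR}. For braid-like Reidemeister II between strands of colors $i \leq j$, I would tensor the complexes defined in Equations \eqref{eq:posxDef} and \eqref{eq:negxDef}, organize the result as a multicone via Proposition \ref{prop:gen multicone equiv}, and simplify term-by-term using the digon and square-switch relations in Figure \ref{fig:webRel}; all but the identity ladder contract away, leaving the identity tangle shifted by $\h^i$. This matches the formula because, under the conventions of Equations \eqref{eq:posxDef} and \eqref{eq:negxDef}, only the positive crossing in the cancelling pair contributes to the sum, with minimum color exactly $i$.

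For the rung slide of Figure \ref{fig:rungslide} and the rung twist of Figure \ref{fig:rungtwist}, I would present each of the two affected crossings as a multicone over its ladder resolutions via Equation \eqref{eq:posxDef}, apply the square-switch relation (second line of Figure \ref{fig:webRel}) to push the rung through each ladder term, and then reassemble the resulting terms into a multicone which, by Proposition \ref{prop:gen multicone equiv}, is chain homotopy equivalent to the crossing-multicone on the other side of the move. The claimed $\h^t$ shift then arises from the fact that a positive crossing $\sigma_{a,b}$ with $a \leq b$ contributes exactly $a$ to the homological degree of the multicone's top-rung ladder; a change in the colors of the crossings on each side therefore produces a change in homological placement equal to the difference in the minima, which is precisely Equation \eqref{eq:isotopyShifts}.

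The main obstacle is the bookkeeping in the rung moves. The square-switch relation from \cite{CKM} produces a signed sum of ladders with quantum-binomial coefficients, so the identification of the two sides as chain homotopy equivalent multicones requires some care; moreover, whether the new colors $a' = a \pm k$ and $b' = b \mp k$ are the new minima depends on the relative sizes of the strand colors and the rung label $k$, so one must split the argument into cases and verify in each that the shift produced by the web manipulation agrees with the $\min$-formula. The $\q^s$ shifts can be read off from the same manipulation but are not relevant for the applications of this proposition elsewhere in the paper.
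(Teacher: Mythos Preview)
The paper does not prove this proposition; it is cited from \cite{MAMW} (Proposition 2.19) without argument. The only commentary the paper offers is the heuristic sentence following the statement: since each positive crossing complex \eqref{eq:posxDef} has its right-most term in homological degree $\min(i,j)$, the sum $\sum_\tau \min(\tau)$ records the right-most homological degree of the whole complex, and that right-most degree is invariant under the listed moves; hence the shift must be the difference of these sums.

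Your approach of decomposing into elementary moves and telescoping is sound and is presumably close to what \cite{MAMW} does. One small correction: since the proposition concerns \emph{positive} web-braid diagrams, braid-like Reidemeister II is not among the allowed moves (it would introduce a negative crossing), so that paragraph can be dropped. The real content is the rung-slide and rung-twist cases, and your plan for those is correct in outline: expand the crossing via \eqref{eq:posxDef}, push the rung through term-by-term using the web relations, and reassemble. The paper's heuristic gives a cleaner bookkeeping device than your case analysis: rather than tracking the minima directly, observe that in an alternative normalization placing the right-most ladder of \eqref{eq:posxDef} in degree zero, all of these moves are honest (shift-free) equivalences; the shift in the paper's normalization is then read off as the difference in right-most degrees, which is exactly \eqref{eq:isotopyShifts}. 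This sidesteps the case split on which of $a\pm k$, $b\mp k$ is the new minimum.
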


If we look at Equation \eqref{eq:posxDef}, the homological size of the complex is dependent on the minimum of the two colors at hand, and so Equation \eqref{eq:isotopyShifts} is really just saying that, during such an isotopy of web-braids, the \emph{right-most} homological grading remains fixed regardless of the overall size of the complex.  In any case, if we want to use Proposition \ref{prop:isotopyShifts} to estimate homological shifts, we have to keep track of possible color-minimums of crossings for a given coloring.  This motivates the following definition.

\begin{definition}\label{def:colorsize}
Given a coloring $\gamma=(\gamma_1,\ldots,\gamma_n)$, we define the \emph{color size} of $\gamma$, denoted by $\cs{\gamma}$, to be
\begin{equation}\label{eq:colorsize}
\cs{\gamma}:=\sum_{1\leq i < j \leq n} \min(\gamma_i,\gamma_j).
\end{equation}
\end{definition}

We end this short section with a color size computation that we will require later.  Let $\Bn$ denote the braid group on $n$ strands and $\sigma_1,...,\sigma_{n-1}$ denote its standard (positive) generators.  We let $\FT$ denote the full twist braid $(\sigma_{1}\cdots\sigma_{n-1})^n$ on $n$ strands (see Figure \ref{fig:FT} for a specific example). 

\begin{figure}[ht]
\[
\begin{tikzpicture}[x=1em,y=-1em]
\foreach \j/\k in {0/1, 1/2, 2/3, 3/1, 4/2, 5/3, 6/1, 7/2, 8/3, 9/1, 10/2, 11/3}
 {
  \Bsigma[\j]{4}{\k}
 }
\end{tikzpicture}
\]
\caption{The full twist $\FT$ for $n=4$.}
\label{fig:FT}
\end{figure}

\begin{proposition}\label{prop:FT colorsize}
Given a coloring $\gamma$, let $\FT_{(\gamma)}$ denote the full twist on $n$ strands colored according to $\gamma$.  Then in the notation of Equation \eqref{eq:isotopyShifts}, we have
\begin{equation}\label{eq:FT colorsize}
\sum_{\chi\in \FT_{(\gamma)}} \min(\chi) = 2\cs{\gamma}
\end{equation}
\end{proposition}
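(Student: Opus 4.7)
The plan is to reduce the computation to the combinatorial fact that in a full twist on $n$ strands, each pair of strands crosses exactly twice. Since colors are preserved along strands of a braid, once I know which two strands meet at a crossing I know exactly which two elements of the coloring $\gamma$ determine $\min(\chi)$ for that crossing.

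First, I would verify (or appeal to the standard fact, provable by a straightforward induction on $n$) that in $\FT = (\sigma_1 \cdots \sigma_{n-1})^n$ each pair of strands crosses exactly twice. One clean way to see this: the single braid word $(\sigma_1\cdots\sigma_{n-1})$ causes the strand entering at the top-left to travel all the way to the bottom-right, crossing every other strand once, while the other strands each shift one position to the left; iterating this $n$ times, one checks that for each fixed pair of positions $(i,j)$ with $i<j$ at the top, the two strands starting in those positions cross exactly twice in the full picture. Equivalently, $\FT$ represents a $2\pi$ Dehn twist on the punctured disc, which links each pair of strands with linking number $+1$, hence produces exactly two positive crossings per pair.

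Second, I would use that colors travel along strands: if the strand entering the top at position $i$ carries color $\gamma_i$, then every crossing $\chi$ in $\FT_{(\gamma)}$ is a crossing between a strand of color $\gamma_i$ and one of color $\gamma_j$ for some uniquely determined pair $i<j$, and $\min(\chi) = \min(\gamma_i,\gamma_j)$. Combining this with the previous step, each unordered pair $\{i,j\}$ contributes exactly $2\min(\gamma_i,\gamma_j)$ to the sum $\sum_{\chi \in \FT_{(\gamma)}} \min(\chi)$.

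Finally, summing over all pairs and comparing with Definition \ref{def:colorsize} gives
\[
\sum_{\chi\in \FT_{(\gamma)}} \min(\chi) \;=\; 2\!\!\sum_{1\leq i<j\leq n}\!\!\min(\gamma_i,\gamma_j) \;=\; 2\cs{\gamma},
\]
which is the desired identity. The only nontrivial ingredient is the two-crossings-per-pair property of $\FT$; everything else is bookkeeping about how colors propagate. I expect no real obstacle beyond recording this induction (or citing it) cleanly.
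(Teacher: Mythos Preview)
Your proposal is correct and follows exactly the same approach as the paper: both arguments rest on the fact that in $\FT$ each pair of strands crosses precisely twice, so summing $\min(\chi)$ over all crossings yields $2\sum_{i<j}\min(\gamma_i,\gamma_j)=2\cs{\gamma}$. You simply spell out the two-crossings-per-pair fact in more detail than the paper does.
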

\begin{proof}
A full twist involves each pair of strands crossing each other precisely twice.  Meanwhile, $\cs{\gamma}$ computes the sum of the minimum color between each pair of colors.
\end{proof}


\subsection{The complex for a uni-colored crossing}\label{sec:1-color crossing}
In order to prove Theorem \ref{thm:imprecise main thm}, we will need to understand the complexes associated to certain special colored braids.  The first and easiest scenario is the one utilized in \cite{MAMW}.  Consider the case of Equation \eqref{eq:posxDef} when $i=j$.  In other words, consider the complex associated to a uni-colored positive crossing.  In this case, the left-most term on the right-hand side of Equation \eqref{eq:posxDef} has a zero on both rungs, and so the diagram is in fact the identity diagram on two strands colored $i$.  We shall denote this 2-strand identity diagram by $I_{(i)}$.

\begin{lemma}\label{lem:1-color crossing complex}
Let $\tau_{(i)}$ denote a single positive crossing between two strands colored by $\gamma=(i,i)$.  Then $\KC{\tau_{(i)}}$ is equal to a cone
\[\KC{\tau_{(i)}}= \cone( I_{(i)} \rightarrow \Xch ) \]
where $\Xch$ is a complex such that, for any diagram $\delta_x\in\Xch$,
\begin{itemize}
\item $\h_X(\delta_x)\geq 0$, and
\item $\delta_x$ is a ladder diagram containing an intermediate coloring $\gamma_x$ with $\cs{\gamma_x}<\cs{\gamma}$.
\end{itemize}
\end{lemma}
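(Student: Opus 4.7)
The plan is to read the decomposition directly off the definition in Equation \eqref{eq:posxDef} specialized to $i=j$.

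First I would observe that with $i=j$, the leftmost term ($k=0$) of Equation \eqref{eq:posxDef} is a ladder both of whose rungs carry label $0$. By the $\sln$-web relations of \cite{CKM} (edges labeled $0$ may be erased), this web collapses to two parallel strands both colored $i$, that is, to $I_{(i)}$. The hypothesis $i=j$ is essential here; for $i \neq j$ the $k=0$ ladder is instead a non-trivial projection-inclusion composition rather than the identity.

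Next I would define $\Xch$ to be the truncation of Equation \eqref{eq:posxDef} obtained by removing this leftmost identity term, namely the complex
$$\Xch \;:=\; \bigl( L_1 \xrightarrow{d_1} L_2 \xrightarrow{d_2} \cdots \xrightarrow{d_{i-1}} L_i \bigr),$$
where $L_k$ denotes the ladder appearing in homological degree $k$ of Equation \eqref{eq:posxDef}, viewed now as a standalone complex with $L_1$ placed at homological degree $0$. Under this convention $\h_X(L_k) = k-1 \geq 0$ for every $k \geq 1$, establishing the first bullet. Forming the cone of the chain map $d_0 : I_{(i)} \to \Xch$ then shifts $\Xch$ upward by one and places $I_{(i)}$ at degree $0$, recovering the complex of Equation \eqref{eq:posxDef}; thus $\KC{\tau_{(i)}} = \cone(I_{(i)} \to \Xch)$, as required.

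For the second bullet I would inspect each ladder $L_k$ with $k \geq 1$: the intermediate coloring $\gamma_x$ on the two vertical edges between the rungs has the form $(i-k,\, i+k)$ (or its mirror), so
$$\cs{\gamma_x} \;=\; \min(i-k,\, i+k) \;=\; i-k \;<\; i \;=\; \min(i,i) \;=\; \cs{\gamma}.$$

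No step here appears to be a serious obstacle: the lemma is essentially a structural observation about the standard resolution of a uni-colored positive crossing. The only points that need minor care are the collapse of the $k=0$ ladder to $I_{(i)}$ (which genuinely uses $i=j$) and matching the cone convention so that homological degrees line up with Equation \eqref{eq:posxDef}.
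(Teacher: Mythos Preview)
Your proof is correct and follows essentially the same approach as the paper: both read the decomposition directly from Equation \eqref{eq:posxDef} specialized to $i=j$, identify $\Xch$ as the shifted truncation $\h^{-1}\KCnonzero{\tau_{(i)}}$, and observe that the intermediate colorings $(i-k,i+k)$ have strictly smaller color size.
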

\begin{proof} This is a direct translation of Equation \eqref{eq:posxDef} in the case when $i=j$; the complex $\Xch$ is precisely $\h^{-1}\KCnonzero{\tau_{(i)}}$ and the intermediate colorings are of the form $(i-\hkc(\delta),i+\hkc(\delta))$.  Visually we see
\[
\begin{tikzpicture}[x=2em,y=-2.5em,baseline={([yshift=-1ex]current bounding box.center)}]
\Bsigma[0]{2}{1}
\smnode at (0,-.3){$i$};
\smnode at (1,-.3){$i$};
\smnode at (0,1.3){$i$};
\smnode at (1,1.3){$i$};
\end{tikzpicture}
\qq{=}
\begin{tikzpicture}[x=2em,y=-2.5em,baseline={([yshift=-1ex]current bounding box.center)}]
\Bsigma[0]{2}{0}
\smnode at (0,-.3){$i$};
\smnode at (1,-.3){$i$};
\smnode at (0,1.3){$i$};
\smnode at (1,1.3){$i$};
\end{tikzpicture}
\qq{\longrightarrow}
\begin{tikzpicture}[x=2em,y=-1.5em,baseline={([yshift=-2ex]current bounding box.center)}]
\smnode at (0,-.3){$i$};
\smnode at (1,-.3){$i$};
\smnode at (0,2.3){$i$};
\smnode at (1,2.3){$i$};
\node[right,scale=.7] at (1,1){$i+1$};
\node[left,scale=.7] at (0,1){$i-1$};
\ULrungBR[0]{2}{1}{}
\URrungBL[1]{2}{1}{$1$}
\end{tikzpicture}
\qq{\longrightarrow}
\cdots
\qq{\longrightarrow}
\begin{tikzpicture}[x=2em,y=-1.5em,baseline={([yshift=-2ex]current bounding box.center)}]
\smnode at (0,-.3){$i$};
\smnode at (1,-.3){$i$};
\smnode at (0,2.3){$i$};
\smnode at (1,2.3){$i$};
\node[right,scale=.7] at (1,1){$2i$};
\node[left,scale=.7] at (0,1){$0$};
\ULrungBR[0]{2}{1}{}
\URrungBL[1]{2}{1}{$i$}
\end{tikzpicture}
\]
where intermediate colorings can be seen at the vertical midpoints of ladder diagrams.  As usual, we are ignoring the $q$-degree shifts.
\end{proof}

\subsection{The complex for a two-colored clasp}
The author learned the following argument from conversations with Matt Hogancamp.  We begin with a simple lemma from homological algebra.

\begin{lemma}\label{lem:Matts lemma}
Consider a chain complex $\Ach$ of the form
\[\Ach = A^0_1\oplus A^0_2 \rightarrow \Ach^{\geq 1}\]
over some additive category, with no terms in negative homological grading.  Let $\pi^*:\Ach \rightarrow A^0_2$ be the projection map onto the one-term complex sitting in homological degree zero.  Suppose that $\pi^*$ is chain-homotopic to the zero map.  Then the complex $\Ach$ is chain homotopy equivalent to one of the form
\[\Ach \simeq A^0_1 \rightarrow \Xch\]
where the term $A^0_2$ has been removed, and the complex $\Xch$ is a direct summand of the original $\Ach^{\geq 1}$.
\end{lemma}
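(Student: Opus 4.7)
The strategy is a Gaussian elimination argument: the null-homotopy for $\pi^*$ produces a splitting that lets us cancel $A^0_2$ against a matching copy sitting inside $A^1$. First, I unpack what it means for $\pi^*$ to be null-homotopic.  Since the target complex is concentrated in degree zero and $\Ach$ has no negative terms, a null-homotopy is just a single map $h: A^1 \to A^0_2$ satisfying $h \circ d^0|_{A^0} = \pi^*|_{A^0}$.  Restricting to each summand of $A^0 = A^0_1 \oplus A^0_2$ gives the two identities $h \circ d^0|_{A^0_2} = \mathrm{id}_{A^0_2}$ and $h \circ d^0|_{A^0_1} = 0$.  The first says that $\iota := d^0|_{A^0_2}$ is a split monomorphism retracted by $h$, yielding a canonical direct sum decomposition $A^1 = \iota(A^0_2) \oplus \tilde{A}^1$ with $\tilde{A}^1 := \ker h$; the second says that $d^0|_{A^0_1}$ factors through $\tilde{A}^1$, and I call the resulting map $f: A^0_1 \to \tilde{A}^1$.

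Next, I use $d^1 \circ d^0 = 0$ to analyze $d^1$ in the splitting $A^1 = A^0_2 \oplus \tilde{A}^1$.  Writing $d^1 = (g, \tilde{d})$ and composing with the explicit matrix for $d^0$ forces $g = 0$ and $\tilde{d} \circ f = 0$.  In particular $d^1$ vanishes on the $A^0_2$-summand of $A^1$, so the data consisting of $A^0_1$ in degree zero and $\tilde{A}^1 \to A^2 \to A^3 \to \cdots$ in higher degrees (with differential $f$ in degree zero and the inherited differentials beyond) yields a well-defined complex $\Ach'$ of exactly the shape claimed.  Since $d^1|_{A^0_2} = 0$, the subcomplex $\Xch := \tilde{A}^1 \to A^2 \to \cdots$ of $\Ach^{\geq 1}$ has a complementary subcomplex consisting of the summand $A^0_2$ in degree $1$ alone, so $\Xch$ is indeed a direct summand of $\Ach^{\geq 1}$.

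Finally, I exhibit the equivalence $\Ach \simeq \Ach'$ explicitly.  Define $\iota_{\Ach'}: \Ach' \to \Ach$ using the canonical summand inclusions $A^0_1 \hookrightarrow A^0_1 \oplus A^0_2$ and $\tilde{A}^1 \hookrightarrow A^0_2 \oplus \tilde{A}^1$ (and the identity in higher degrees), and define $\pi_{\Ach'}: \Ach \to \Ach'$ by the matching projections.  Both are chain maps by the identities from the first step, and $\pi_{\Ach'} \circ \iota_{\Ach'} = \mathrm{id}_{\Ach'}$ on the nose.  The degree $-1$ homotopy $H$ whose only nonzero component is $H^1: A^1 \to A^0$ given by composing $h$ with the inclusion $A^0_2 \hookrightarrow A^0_1 \oplus A^0_2$ then satisfies $dH + Hd = \mathrm{id}_\Ach - \iota_{\Ach'} \circ \pi_{\Ach'}$.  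The main obstacle is purely bookkeeping: once the block forms of $d^0$ and $d^1$ in the splitting are in hand, the verification for $H$ is a short matrix calculation, but keeping the summand identifications and signs straight is where errors tend to creep in.
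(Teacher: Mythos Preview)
Your argument is correct and is exactly the Gaussian elimination the paper has in mind; the paper's own proof is a two-sentence sketch of precisely this strategy. One small caveat worth noting: when you write $A^1 = \iota(A^0_2) \oplus \tilde A^1$ with $\tilde A^1 := \ker h$, you are implicitly assuming the idempotent $\iota\circ h$ on $A^1$ splits, which need not hold in an arbitrary additive category---the paper handles this by remarking that one may need to pass to the Karoubi envelope.
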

\begin{proof}
In this setting, a chain homotopy between $\pi^*$ and $0$ is precisely a map $H:A^1\rightarrow A^0_2$ that inverts the corresponding component of the differential, which in turn allows for a Gaussian elimination argument (perhaps after passing to the Karoubi envelope of our category).  The details are left to the reader.
\end{proof}

In the statement of the following lemma, we use the notation $I_{(ij)}$ to indicate the two-strand identity diagram with strands colored $i$ and $j$.
\begin{lemma}\label{lem:2-color clasp complex}
Let $\tau^2_{(ij)}$ denote a pair of adjacent positive crossings (called a \emph{clasp}) between two strands colored by $\gamma=(i,j)$.  Then $\KC{\tau^2_{(ij)}}$ is chain homotopy equivalent to a cone
\[\KC{\tau^2_{(ij)}} \simeq \cone(I_{(ij)}\rightarrow \Xch)\]
where $\Xch$ is a direct summand of a complex $\Cch$ satisfying, for any diagram $\delta_x\in\Cch$,
\begin{itemize}
\item $\h_C(\delta_x)\geq 0$, and
\item $\delta_x$ is a ladder diagram containing an intermediate coloring $\gamma_x$ with $\cs{\gamma_x}<\cs{\gamma}$.
\end{itemize}
\end{lemma}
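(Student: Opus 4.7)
The approach is to expand $\KC{\tau^2_{(ij)}}$ as a multicone using Equation \eqref{eq:posxDef} on both crossings, isolate the homological degree-$0$ component, and then invoke Lemma \ref{lem:Matts lemma} to strip off any extraneous identity summands. Without loss of generality assume $i \leq j$, so each single-crossing complex has $i+1$ ladder terms and $\KC{\tau^2_{(ij)}}$ becomes a multicone indexed by pairs $(a,b) \in \{0,\ldots,i\}^2$ with homological grading $a+b$. The unique degree-$0$ summand is the $(0,0)$ term, which is a stack of the two ``outermost'' single-crossing ladders; tracing through the rung labels and directions dictated by Equation \eqref{eq:posxDef} shows that the two middle rungs of this composite form a digon subweb, and applying the first web relation of Figure \ref{fig:webRel} together with Proposition \ref{prop:gen multicone equiv} replaces the $(0,0)$ term by a direct sum of $\qbinom{j}{i}$ shifted copies of $I_{(ij)}$ in a chain homotopy equivalent multicone $\Mch'$.

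When $i=j$, the quantum binomial equals $1$ and $\Mch'$ is already of the required form $\cone(I_{(ij)} \to \Xch)$, with $\Xch$ built from the $(a,b) \neq (0,0)$ terms, each of which has middle-slice coloring $(i-k, i+k)$ for some $k \geq 1$ and hence color size strictly less than $\cs{\gamma}=i$ (as in Lemma \ref{lem:1-color crossing complex}). When $i < j$, the extraneous $\qbinom{j}{i}-1$ shifted copies of $I_{(ij)}$ in degree $0$ must be removed, which I would accomplish by iterated application of Lemma \ref{lem:Matts lemma}. The null-homotopies required by that lemma are produced from the explicit multicone differential on $\Mch'$: each extra identity copy was introduced by the digon expansion and therefore pairs with a component of the degree-$1$ differential admitting a foam-level partial inverse, i.e., a Gaussian-elimination step inside $\foamC$.

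After all such cancellations one obtains $\KC{\tau^2_{(ij)}} \simeq \cone(I_{(ij)} \to \Xch)$ with $\Xch$ a direct summand of the larger complex $\Cch$ built from the non-identity terms of $\Mch'$ (namely the original $(a,b) \neq (0,0)$ terms together with the foam-equivalent preimages of the cancelled identity copies obtained by reversing the digon relation). The main obstacle, and the step that demands the most care, is the combinatorial verification that every diagram $\delta_x \in \Cch$ contains an intermediate horizontal slice whose coloring $\gamma_x$ satisfies $\cs{\gamma_x} < \cs{\gamma}$. For the original $(a,b) \neq (0,0)$ terms this may require applying rung slides and rung twists (Figures \ref{fig:rungslide} and \ref{fig:rungtwist}) together with the second web relation of Figure \ref{fig:webRel} to produce a form in which such a slice is manifest, while for the preimages of the cancelled identity summands the reduction comes from the lower-labeled inner strands introduced when unwinding the digon.
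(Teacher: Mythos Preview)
Your identification of the degree-$0$ term is incorrect, and this derails the rest of the argument. When you stack the two degree-$0$ ladders from Equation~\eqref{eq:posxDef} (one of them reflected, since the two crossings carry the colors in opposite orders), the $0$-labelled rungs vanish and what remains is not a digon but a \emph{trapezoid}: two oppositely-slanted rungs each labelled $j-i$, with intermediate coloring $(j,i)$. The first relation of Figure~\ref{fig:webRel} does not apply here, and in any case that relation collapses a digon to a single strand, not to copies of the two-strand identity $I_{(ij)}$; so the claimed $\qbinom{j}{i}$ copies of $I_{(ij)}$ never appear. The correct simplification uses the \emph{second} relation of Figure~\ref{fig:webRel} (square-switch) on this trapezoid, producing exactly one copy of $I_{(ij)}$ together with further trapezoids $\epsilon_p$ (rungs labelled $p$, for $1\le p\le j-i$). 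It is these $\epsilon_p$, not surplus identities, that must be Gaussian-eliminated.

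This correction also repairs the two places you flag as delicate. First, with $\Cch:=\KCnonzero{\tau^2_{(ij)}}$ taken to be the positive-degree part of the \emph{original} clasp complex, every diagram in $\Cch$ visibly carries an intermediate coloring of strictly smaller color size (some rung label is positive), so no rung slides, twists, or auxiliary web relations are needed for that verification. Second, your justification of the null-homotopies (``foam-level partial inverse'') is only a hope; the paper's argument is the substantive step you are missing. For each $\epsilon_p$ one applies the adjunction of Equation~\eqref{eq:chain cx duality} to the projection $\pi^*:\KC{\tau^2_{(ij)}}\to\epsilon_p$, then pulls the rungs of $\epsilon_p$ through the two crossings via Proposition~\ref{prop:isotopyShifts}, picking up a homological shift of $2p>0$. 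This forces the relevant hom-complex to live in strictly positive degrees, so $\pi^*\sim 0$ and Lemma~\ref{lem:Matts lemma} applies.
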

\begin{proof}
We will consider the case where $j\geq i$ (the case of $j<i$ is similar).  We begin by expanding $\KC{\tau^2_{(ij)}}$ along both crossings into a complex of various web diagrams.  It is not hard to see from Equation \eqref{eq:posxDef} that the only term in homological degree zero is the trapezoid
\[\begin{tikzpicture}[x=2em,y=-2em,baseline={([yshift=-2ex]current bounding box.center)}]
\URrungBL[0]{2}{1}{$j-i$}
\ULrungBR[1]{2}{1}{$j-i$}
\smnode at (0,-.3){$i$};
\smnode at (1,-.3){$j$};
\smnode at (0,2.3){$i$};
\smnode at (1,2.3){$j$};
\end{tikzpicture}
\]
while every other diagram, sitting in $\KCnonzero{\tau^2_{(ij)}}$, contains intermediate colorings of lesser color-size.  We will declare $\Cch$ to be the complex $\KCnonzero{\tau^2_{(ij)}}$.  If $j=i$, we are done (letting $\Xch=\Cch$, trivially a direct summand as desired).  Otherwise, we use the 2-isomorphism which lifts the second equation of Figure \ref{fig:webRel} to replace this trapezoid with a large direct sum of terms (all still in homological degree zero).  A careful inspection of the right-hand side of Figure \ref{fig:webRel} indicates that precisely one of these summands will be the identity diagram $I_{(ij)}$, while the other summands will contain ($q$-shifted) trapezoids of the form
\[
\begin{tikzpicture}[x=2em,y=-2em,baseline={([yshift=-2ex]current bounding box.center)}]
\ULrungBR[0]{2}{1}{$p$}
\URrungBL[1]{2}{1}{$p$}
\smnode at (0,-.3){$i$};
\smnode at (1,-.3){$j$};
\smnode at (0,2.3){$i$};
\smnode at (1,2.3){$j$};
\end{tikzpicture}
\]
for $p>0$.

Now we fix any such trapezoidal diagram, calling it $\epsilon_p$.  As in Lemma \ref{lem:Matts lemma}, we consider the projection map
\[\pi^*:\KC{\tau^2_{(ij)}} \rightarrow \KC{\epsilon_p}\]
to the corresponding one-term complex in homological degree zero.  Using the duality equivalence of Figure \ref{fig:duality iso}, together with the fact that any such trapezoid $\epsilon_p$ is symmetric about its central horizontal axis, we deduce the following:
\[
\pi^*\in \hom\left(
\,
\begin{tikzpicture}[x=1.5em,y=-1.5em,baseline={([yshift=-2ex]current bounding box.center)}]
\Bsigma[0]{2}{1}
\Bsigma[1]{2}{1}
\smnode at (0,-.3){$i$};
\smnode at (1,-.3){$j$};
\smnode at (0,2.3){$i$};
\smnode at (1,2.3){$j$};
\end{tikzpicture}
\qq{,}
\begin{tikzpicture}[x=1.5em,y=-1.5em,baseline={([yshift=-2ex]current bounding box.center)}]
\ULrungBR[0]{2}{1}{$p$}
\URrungBL[1]{2}{1}{$p$}
\smnode at (0,-.3){$i$};
\smnode at (1,-.3){$j$};
\smnode at (0,2.3){$i$};
\smnode at (1,2.3){$j$};
\end{tikzpicture}
\,
\right)
\,\simeq\,
\hom\left(
\,
\begin{tikzpicture}[x=1.5em,y=-1.5em,baseline={([yshift=-2ex]current bounding box.center)}]
\Bsigma[0]{2}{1}
\Bsigma[1]{2}{1}
\ULrungBR[2]{2}{1}{$p$}
\URrungBL[3]{2}{1}{$p$}
\smnode at (0,-.3){$i$};
\smnode at (1,-.3){$j$};
\smnode at (0,4.3){$i$};
\smnode at (1,4.3){$j$};
\end{tikzpicture}
\qq{,}
\begin{tikzpicture}[x=1.5em,y=-1.5em,baseline={([yshift=-2ex]current bounding box.center)}]
\foreach \j in {0,1,2,3}
 { \Bsigma[\j]{2}{0} }
\smnode at (0,-.3){$i$};
\smnode at (1,-.3){$j$};
\smnode at (0,4.3){$i$};
\smnode at (1,4.3){$j$};
\end{tikzpicture}
\,
\right)
\,\simeq\,
\hom\left(
\,
\h^{2p}
\begin{tikzpicture}[x=1.5em,y=-1.5em,baseline={([yshift=-2ex]current bounding box.center)}]
\ULrungBR[0]{2}{1}{$p$}
\Bsigma[1]{2}{1}
\Bsigma[2]{2}{1}
\URrungBL[3]{2}{1}{$p$}
\smnode at (0,-.3){$i$};
\smnode at (1,-.3){$j$};
\smnode at (0,4.3){$i$};
\smnode at (1,4.3){$j$};
\end{tikzpicture}
\qq{,}
\begin{tikzpicture}[x=1.5em,y=-1.5em,baseline={([yshift=-2ex]current bounding box.center)}]
\foreach \j in {0,1,2,3}
 { \Bsigma[\j]{2}{0} }
\smnode at (0,-.3){$i$};
\smnode at (1,-.3){$j$};
\smnode at (0,4.3){$i$};
\smnode at (1,4.3){$j$};
\end{tikzpicture}
\,
\right).
\]
The positive homological shift in the first diagram of the last hom-space is derived from Proposition \ref{prop:isotopyShifts}.  This shift ensures that this hom-space is actually the zero-space, and thus we must have $\pi^*$ chain homotopic to the zero map.  Lemma \ref{lem:Matts lemma} allows us to conclude that there is a Gaussian elimination argument which removes any such diagram $\epsilon_p$ from the complex $\KC{\tau^2_{(ij)}}$, replacing $\Cch$ with some direct summand of itself.  Iterating this process to remove all such trapezoids from the homological degree zero term leaves us with a complex of the desired form.
\end{proof}


\section{Proving Theorem \ref{thm:imprecise main thm}}\label{sec:main pf}
Theorem \ref{thm:imprecise main thm} is about limiting complexes associated with certain semi-infinite braids.  Such braids will be represented by semi-infinite braid words up to finitely many Reidemeister moves.  In order to prove Theorem \ref{thm:imprecise main thm}, we begin by proving a version for semi-infinite braid words; this will take up the majority of this section.  Once we have this result (Section \ref{sec:proving main thm}), it will be relatively simple to lift the result to semi-infinite braids.

The overall strategy for the semi-infinite braid words is similar to that of \cite{MWGI,MAMW}, and we summarize it here.  We will construct inverse systems for the infinite twist, as well as for other semi-infinite words, where the system maps are quotient maps corresponding to resolving certain crossings as vertical identity braids (Proposition \ref{prop:colorpure inv sys}).  In the case of the infinite twist, it will be relatively simple to show that the system is Cauchy and thus has a limit categorifying a projector as in Cautis' results (Theorem \ref{thm:infFT with max purity seq}).  For the systems coming from other semi-infinite words, we will construct maps $F_\ell$ to the system for the infinite twist that are \emph{also} quotient maps corresponding to identity resolutions (Section \ref{sec:constructing Fell}); this will guarantee that these maps commute with all system maps.  We will then seek to apply Proposition \ref{prop:compSys} by estimating the homological orders $|F_\ell|_\h$ of these maps; this will be done by expanding $\cone(F_\ell)$ as a multicone, simplifying the multicone using Proposition \ref{prop:gen multicone equiv}, and estimating the resulting homological orders of elements using Propositions \ref{prop:isotopyShifts} and \ref{prop:FT colorsize} (Section \ref{sec:bounding Fell}).

Throughout this section, $I$ will denote the identity braid on $n$ strands.

\subsection{Color-pure braids}\label{sec:colorpure braids}
If $\beta\in\Bn$ is a braid on $n$ strands, we will use the notation $\beta^{(\gamma)}_{(\gamma')}$ to indicate that $\beta$ has been colored so that the sequence of colors at the top (respectively bottom) of $\beta$ is given by $\gamma$ (respectively $\gamma'$).  Of course, since $\beta$ is a braid, $\gamma'$ must be some permutation of $\gamma$ determined by $\beta$.

\begin{definition}\label{def:colorpure braids}
A colored braid $\beta^{(\gamma)}_{(\gamma')}$ is called \emph{color-pure (with respect to $\gamma$)} if $\gamma=\gamma'$.  In such a case we will omit the superscript and simply write $\beta_{(\gamma)}$ for the color-pure braid.
\end{definition}

Clearly all pure braids are color-pure with respect to any $\gamma$.  If $\gamma=(i,i,\ldots,i)$ denotes a uni-coloring, then all colored braids $\beta^{(\gamma)}_{(\gamma')}$ are color-pure.  Meanwhile, if $\gamma=(i,j,\ldots,\ell)$ denotes a coloring using all distinct colors, then a colored braid $\beta^{(\gamma)}_{(\gamma')}$ is color-pure if and only if it is pure.  Note that the full twist $\FT$ is pure, and so our earlier notation $\FT_{(\gamma)}$ agrees with the convention of omitting the superscript for Definition \ref{def:colorpure braids}.

Our first task is to combine Lemmas \ref{lem:1-color crossing complex} and \ref{lem:2-color clasp complex} into a more general statement about color-pure braids.  We begin with a helpful lemma that will help us find clasps.
\begin{lemma}\label{lem:clasps in colorpure}
Any non-trivial positive color-pure braid $\beta_{(\gamma)}$ that does not have any uni-colored crossings must be braid isotopic to another positive color-pure braid $\beta'_{(\gamma)}$ that contains some clasp $\tau^2$.
\end{lemma}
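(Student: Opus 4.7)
The plan has two main parts: first reduce to the case of a pure braid, then locate a clasp via a minimality argument combined with braid relations.

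For the reduction, observe that the hypothesis of no uni-colored crossings forces any two strands of the same color to never cross, so same-colored strands preserve their relative order from bottom to top. Combined with color-purity, which requires the color at each fixed position to be the same at the top and bottom, this forces the braid permutation to fix each strand individually; hence $\beta$ is a positive pure braid. Since $\beta$ is non-trivial and since each pair of strands in a pure braid must cross an even number of times (each crossing between them reverses their relative order, and purity requires the net order to be preserved), there exists at least one pair $(s_u, s_v)$ of strands crossing at least twice.

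For each such pair, consider pairs of consecutive crossings $(\tau_a, \tau_b)$ between $s_u$ and $s_v$ in the current braid word, where ``consecutive'' means no other crossing between $s_u$ and $s_v$ occurs strictly between the positions of $\tau_a$ and $\tau_b$. Over all such choices, select one minimizing the gap $g := b-a-1$. If $g=0$, then $\tau_a\tau_b$ is the desired clasp. Otherwise, write $\tau_a = \sigma_p$, so that $s_u, s_v$ sit at positions $p, p+1$ just after $\tau_a$, and analyze $\tau_{a+1}$. If $\tau_{a+1}$ involves neither $s_u$ nor $s_v$, then its generator index differs from $p$ by at least $2$, so the commutation $\sigma_i\sigma_j = \sigma_j\sigma_i$ pulls $\tau_{a+1}$ past $\tau_a$ and reduces the gap, contradicting minimality. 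If instead $\tau_{a+1}$ involves $s_u$ and a third strand $s_m$, then $s_m$ now sits between $s_u$ and $s_v$ and must be displaced before $\tau_b$ can occur. Either $s_m$ next crosses $s_u$, giving the pair $(s_u, s_m)$ a strictly smaller gap than $g$ (contradicting minimality), or $s_m$ crosses $s_v$, producing a local subword of the form $\sigma_p\sigma_{p-1}\sigma_p\sigma_{p-1}$ (possibly after interspersed commutations), which the braid relation $\sigma_p\sigma_{p-1}\sigma_p = \sigma_{p-1}\sigma_p\sigma_{p-1}$ rewrites as $\sigma_{p-1}\sigma_p\sigma_{p-1}^2$, exposing a clasp directly.

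The main obstacle is verifying this last sub-case cleanly when $g$ is large and $s_u, s_v$ may be pushed far apart by many intermediate strands before returning to adjacency. A topological reformulation should make the bookkeeping more transparent: the arcs of $s_u$ and $s_v$ between $\tau_a$ and $\tau_b$ bound a disk in the braid projection, and any other strand passing through this disk must enter and exit by crossing $s_u$ or $s_v$. Choosing an innermost such disk, one argues recursively that either the disk contains no other strand (in which case $\tau_a$ and $\tau_b$ can be commuted adjacent) or an inner pair of strands provides a strictly smaller gap, contradicting minimality. This innermost-disk argument sidesteps the detailed tracking of how $s_m$-type strands weave between $s_u$ and $s_v$, and it is the approach I would push on in a full write-up.
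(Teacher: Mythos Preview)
Your reduction to the pure-braid case is correct and matches the paper's first step, argued slightly differently (you use preservation of relative order among same-colored strands; the paper chains endpoints and invokes finiteness). For the second step, however, the paper takes a much shorter and genuinely different route: once $\beta$ is a non-trivial positive pure braid, its image under $\Phi:\Bn\to S_n$ is trivial, and since the presentation of $S_n$ differs from that of $\Bn$ only by the extra relations $\sigma_i^2=1$, any reduction of the word to the identity in $S_n$ must at some point apply such a relation. The word at that moment --- reached from $\beta$ by braid relations alone --- contains a clasp. This is essentially the word property for the Coxeter group $S_n$ (a non-reduced word is braid-equivalent to one containing a square), and it replaces your entire minimality analysis with a single observation.

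Your direct approach is a reasonable alternative, but the sketch has a genuine gap in the sub-case where $s_m$ enters by crossing $s_u$ and exits by crossing $s_v$. You claim the subword $\sigma_p\sigma_{p-1}\sigma_p\sigma_{p-1}$ appears ``after interspersed commutations,'' but this only holds when nothing between $\tau_{a+1}$ and the $s_m$--$s_v$ crossing involves any of $s_u, s_v, s_m$; in general further strands may enter and the three relevant strands may drift apart before reuniting. Your innermost-disk reformulation is the right instinct, but notice that a strand passing \emph{through} a bigon (entering on one side, exiting on the other) cuts it into two triangles rather than producing a smaller bigon, so ``innermost'' alone does not dispatch that case either. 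One can finish by tracking the first entry and first exit of such a strand and arguing that if they lie on the same boundary arc one gets a strictly interior bigon, while if they lie on opposite arcs one recurses on the resulting triangle; but this requires more care than your write-up suggests. The paper's algebraic shortcut avoids all of this bookkeeping.
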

\begin{proof}
We begin by noting that a color-pure braid $\beta$ that does not have any uni-colored crossings must in fact be pure.  Indeed if we have a color-pure braid that is not pure, there must be some strand that begins at one colored end-point $a$ and ends at a different end-point $b$ of the same color.  If this is the case, then the strand that begins at $b$ must end at another end-point $c$ of the same color without crossing the first strand.  It is not hard to see that this must eventually lead to a contradiction due to having only finitely many strands.

Therefore, if we let $\Phi$ denote the usual map from the braid group $\Bn$ to the symmetric group $S_n$, we must have $\Phi(\beta)=I$, the identity permutation.  It is well known that $\Bn$ and $S_n$ share the same generators (transpositions) and relations, with $S_n$ having the additional relation $\sigma_i^2=I$ for any generator.  Since $\beta$ was positive and non-trivial, but $\Phi(\beta)=I$, we can conclude that $\beta$ can be altered via braid relations into some isotopic (positive) braid $\beta'$ that contains a clasp $\tau^2$ as desired.
\end{proof}

We now state and prove the key proposition about the structure of the complex associated to a positive color-pure braid.

\begin{proposition}\label{prop:colorpure complex}
Given a positive color-pure braid $\beta_{(\gamma)}$ on $n$ strands, the complex $\KC{\beta_{(\gamma)}}$ is chain homotopy equivalent to a cone
\begin{equation}\label{eq:colorpure complex}
\KC{\beta_{(\gamma)}} \simeq \cone(I_{(\gamma)} \rightarrow \Xch)
\end{equation}
where $\Xch$ is a direct summand of a complex $\Cch$ satisfying, for any diagram $\delta_x\in\Cch$,
\begin{itemize}
\item $\h_C(\delta_x)\geq 0$, and
\item $\delta_x$ is a ladder diagram containing an intermediate coloring $\gamma_x$ with $\cs{\gamma_x}<\cs{\gamma}$.
\end{itemize}
The simplified complex written in this way will be denoted $\KCsimp{\beta_{(\gamma)}}$.
\end{proposition}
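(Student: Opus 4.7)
The proof will proceed by induction on the number of crossings in $\beta_{(\gamma)}$. The base case $\beta_{(\gamma)} = I_{(\gamma)}$ is immediate (taking $\Xch = 0$). For the inductive step, I will split on whether $\beta_{(\gamma)}$ contains a uni-colored crossing.

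If $\beta_{(\gamma)}$ contains a uni-colored crossing $\tau_{(i)}$, I write $\beta_{(\gamma)} = \beta_1 \cdot \tau_{(i)} \cdot \beta_2$ and apply Lemma \ref{lem:1-color crossing complex} to $\tau_{(i)}$. Distributing the tensor product over the inner cone yields
\[\KC{\beta_{(\gamma)}} \simeq \cone\bigl(\KC{\beta_1 \cdot \beta_2} \to \KC{\beta_1} \otimes \Xch_\tau \otimes \KC{\beta_2}\bigr),\]
where $\beta_1 \cdot \beta_2$ is color-pure with one fewer crossing (replacing a uni-colored crossing by the identity preserves all intermediate colorings). By induction, $\KC{\beta_1 \cdot \beta_2} \simeq \cone(I_{(\gamma)} \to \Xch')$ with $\Xch'$ a direct summand of some $\Cch'$ of the desired form. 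Substituting via Proposition \ref{prop:gen multicone equiv} and viewing the iterated cone as a three-term multicone, I then collapse the two terms above $I_{(\gamma)}$ into a single cone complex to put the whole thing in the form $\cone(I_{(\gamma)} \to \Xch)$. If $\beta_{(\gamma)}$ has no uni-colored crossings, Lemma \ref{lem:clasps in colorpure} provides a positive-braid-isotopic $\beta'_{(\gamma)}$ containing a clasp $\tau^2$, and I run the analogous argument using Lemma \ref{lem:2-color clasp complex} in place of Lemma \ref{lem:1-color crossing complex}, reducing to a color-pure braid with two fewer crossings.

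It remains to verify the two properties of $\Xch$. The $\h \geq 0$ condition follows from degree bookkeeping: both $\Xch'$ and $\KC{\beta_1} \otimes \Xch_\tau \otimes \KC{\beta_2}$ sit in $\h \geq 0$ and appear at outer homological degree $\geq 1$ in the iterated cone, so stripping off $I_{(\gamma)}$ and shifting leaves $\Xch$ at $\h \geq 0$. The intermediate coloring condition holds for diagrams from $\Xch'$ by induction, and for diagrams from $\KC{\beta_1} \otimes \Xch_\tau \otimes \KC{\beta_2}$ because the $\Xch_\tau$ factor contributes a ladder with local intermediate coloring $(i-h, i+h)$; using the permutation-invariance of $\cs{\cdot}$ together with a short computation showing that replacing an adjacent pair of equal entries $(i,i)$ by $(i-h, i+h)$ in any global coloring strictly lowers the color size, we obtain $\cs{\gamma_x} < \cs{\gamma}$ globally. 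To realize $\Xch$ as a direct summand of some $\Cch$ of the desired form, I build $\Cch$ as the analogous iterated cone using $\Cch'$ in place of $\Xch'$, extending all cross-maps by zero on the complement of $\Xch'$ in $\Cch'$; the desired properties transfer from $\Cch'$ and $\Xch_\tau$'s enveloping complex.

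The main obstacle I anticipate is careful degree bookkeeping in the iterated cone: specifically, confirming that the outer complex genuinely collapses to the form $\cone(I_{(\gamma)} \to \Xch)$ in the paper's grading conventions (the cross-component from $\Xch'$ to $\KC{\beta_1} \otimes \Xch_\tau \otimes \KC{\beta_2}$ naturally has a degree shift that needs to be tracked), and that the direct summand structure $\Xch \hookrightarrow \Cch$ survives the multicone rearrangements.
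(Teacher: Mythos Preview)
Your proposal is correct and follows essentially the same route as the paper: an induction (the paper phrases it as an explicit iteration) that peels off one uni-colored crossing at a time via Lemma~\ref{lem:1-color crossing complex}, then one clasp at a time via Lemmas~\ref{lem:clasps in colorpure} and~\ref{lem:2-color clasp complex}, collapsing the resulting iterated cone into $\cone(I_{(\gamma)}\to\Xch)$ with $\Xch$ a summand of the direct sum of the pieces. Your verification that the local $2$-strand intermediate-coloring bound lifts to a global $n$-strand bound on $\cs{\cdot}$ is a detail the paper leaves implicit, so you are if anything slightly more careful there.
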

\begin{proof}
Let $\beta_1:=\beta_{(\gamma)}$ denote our starting braid.  If there is a uni-colored crossing $\sigma_a$ in $\beta_1$, we apply Lemma \ref{lem:1-color crossing complex} to $\sigma_a$ allowing us to write $\KC{\beta_1}$ as a cone as in  a two-term version of Figure \ref{fig:braid as mcone ex}:
\[\KC{\beta_1} = \cone \left( \KC{\beta_2} \rightarrow \Xch_1 \right).\]
Here the braid $\beta_2$ is derived from $\beta_1$ by deleting $\sigma_a$, while the complex $\Xch_1$ is based upon concatenating $\h^{-1}\KCnonzero{\sigma_a}$ with the rest of the braid.  For the sake of notational convenience moving forward, we declare $\Cch_1:=\Xch_1$ so that $\Xch_1$ is trivially a direct summand of the complex $\Cch_1$.

Now since we have not changed any of the colors at the top or bottom of the braid, we see that $\beta_2$ is again color-pure but with one less crossing than $\beta_1$.  If $\beta_2$ contains another uni-colored crossing, we apply the same reasoning again to get
\[\KC{\beta_1} = \cone\big( \cone \left( \KC{\beta_3} \rightarrow \Xch_2 \right) \rightarrow \Xch_1 \big).\]
We can continue in this way until we have written $\KC{\beta_1}$ as a large iterated cone with initial term $\KC{\beta_\ell}$ where $\beta_\ell$ is a positive color-pure braid with \emph{no} uni-colored crossings (here $\ell-1$ is the number of uni-colored crossings that were in $\beta_1$, which have now been deleted).  Each of the complexes $\Xch_k$ satisfy the properties of Lemma \ref{lem:1-color crossing complex}, and we can view any $\Xch_k$ as a trivial direct summand of $\Cch_k:=\Xch_k$ satisfying these same properties.

According to Lemma \ref{lem:clasps in colorpure}, our color-pure $\beta_\ell$ with no uni-colored crossings must be braid isotopic to some $\beta_\ell'$ having a clasp $\tau^2$.  Thus we can apply Lemma \ref{lem:2-color clasp complex} to this clasp and write $\KC{\beta_\ell}$ as a cone again as in Figure \ref{fig:braid as mcone ex}:
\[\KC{\beta_\ell}\simeq \KC{\beta_\ell'} \simeq \cone\left( \KC{\beta_{\ell+1}} \rightarrow \Xch_\ell \right) \]
where the braid $\beta_{\ell+1}$ is obtained from $\beta_\ell'$ by deleting the clasp, while $\Xch_\ell$ is a direct summand of the complex $\Cch_\ell$ which is based upon concatenating $\h^{-1}\KCnonzero{\tau^2}$ with the rest of the braid as in the logic of Figure \ref{fig:braid as mcone ex}.  Since a cone is just a two-term multicone, Proposition \ref{prop:gen multicone equiv} allows us to plug this cone in place of the complex $\KC{\beta_\ell}$ in the iterated cone for $\KC{\beta_1}$, so we have
\[
\KC{\beta_1} \simeq \cone\bigg(\cone\Big(\hspsymbol{.1cm}{\cdots} \cone( \KC{\beta_{\ell+1}} \rightarrow \Xch_\ell ) \hspsymbol{.1cm}{\cdots} \rightarrow \Xch_2 \Big) \rightarrow \Xch_1 \bigg).
\]

Thus $\beta_{\ell+1}$ is again a color-pure braid with no uni-colored crossings, and so we can apply the same reasoning yet again.  We continue in this way until we have $\KC{\beta_1}$ written as a large iterated cone with initial term $I_{(\gamma)}$, the one-term complex associated to the identity braid colored by $\gamma$:
\[
\KC{\beta_1} \simeq \cone \bigg( \cone \Big( \cdots \cone \left( \cone( I_{(\gamma)} \rightarrow \Xch_p) \rightarrow \Xch_{p-1}\right) \cdots \rightarrow \Xch_2 \Big) \rightarrow \Xch_1 \bigg)
\]
Here $p$ describes the number of steps it took to delete all of the uni-colored crossings and two-colored clasps from $\beta_1$ to arrive at the identity braid.  Every complex $\Xch_k$ is a direct summand of some $\Cch_k$ whose terms satisfy the desired properties.  The cone operation simply takes direct sums of the objects in the complexes (with a positive homological shift), and so the statement of the proposition is satisfied by taking $\Xch:=\bigoplus_{k=1}^p \Xch_k$ and $\Cch:=\bigoplus_{k=1}^p \Cch_k$.


\end{proof}

\subsection{Color-pure semi-infinite braids}
We begin by giving a more precise definition for our main class of braids.  Let $\BnGen$ denote the standard set of multiplicative generators for the braid group $\Bn$, with $\BnGen^+\subset\BnGen$ denoting the subset consisting of only positive generators $\sigma_i$.

\begin{definition} \label{def:semi-inf braid word}
A \emph{semi-infinite braid word} $\B$ on $n$ strands is a map $\B:\N\rightarrow\BnGen$, written as an infinite word
\[\B = \sigma_{i_1}^{\epsilon_1}\sigma_{i_2}^{\epsilon_2}\sigma_{i_3}^{\epsilon_3}\cdots\]
on the generators $\sigma_i$ (where each $\epsilon_i\in\{-1,1\}$).  We call $\B$ \emph{positive} if $\im(\B)\subset\BnGen^+$ (equivalently each $\epsilon_i=1$, so we may ignore them from the notation).  
\end{definition}

In order to use the results of Section \ref{sec:InvSys}, we need to describe a semi-infinite braid word as a limit of finite braid words.  First we establish notation.

\begin{definition} \label{def:partial braid}
Let $\B = \sigma_{i_1}^{\epsilon_1}\sigma_{i_2}^{\epsilon_2}\sigma_{i_3}^{\epsilon_3}\cdots$ be a semi-infinite braid word and $\ell \in \N$. We define the \emph{$\ell$th partial braid} of $\B$, denoted by $\B_\ell$, as
\[\B_\ell = \sigma_{i_1}^{\epsilon_1}\sigma_{i_2}^{\epsilon_2}\cdots\sigma_{i_\ell}^{\epsilon_\ell}.\]
More generally, the \emph{partial sub-braid from $a$ to $b$}, denoted by $\B^a_b$, is defined to be the braid word 
\[\B^a_b:=\sigma_{i_{a+1}}^{\epsilon_{i_{a+1}}}\sigma_{i_{a+2}}^{\epsilon_{i_{a+2}}}\cdots\sigma_{i_b}^{\epsilon_{i_b}}.\]
In this way, the $\ell^{\text{th}}$ partial braid $\B_\ell$ is equivalent to $\B^0_\ell$, and in general $\B^a_b \cdot \B^b_c = \B^a_c$.  We will also use the notation $\B^a_\infty$ to denote the \emph{truncated} infinite braid word obtained from $\B$ by deleting the first $a$ crossings.
\end{definition}

In this text we are concerned with colored braids.  In our previous papers on the subject, when all strands in a braid were colored by $i$, the uni-coloring $(i,i,\dots,i)$ at the start and end of a semi-infinite braid word $\B$ could easily be identified with the coloring at the start and end of any partial braid $\B_\ell$.  When we allow arbitrary colorings however, more care is needed.

\begin{definition}\label{def:colorpure semi-inf braid word}
Let $\gamma=(\gamma_1,\dots,\gamma_n)\in\{0,1,\dots,N\}^n$ be a fixed coloring.  Then a semi-infinite braid word $\B:\N\rightarrow\BnGen$ gives rise to a sequence of colorings $\gamma(\cdot):\N\cup\{0\} \rightarrow \{0,1,\dots,N\}^n$ by defining $\gamma(0):=\gamma$, and then defining $\gamma(\ell)$ for $\ell\in\N$ as the coloring at the bottom of $\B_\ell$ determined by coloring the top by $\gamma$.  We call $\B$ \emph{color-pure with respect to $\gamma$} if $\gamma(\ell)=\gamma$ for infinitely many $\ell\in\N$, and denote it $\B_{(\gamma)}$.  The sequence $m_1 < m_2 < \cdots$ of all $m_i$ such that $\gamma(m_i)=\gamma$ is called the \emph{maximal purity sequence} for $\B_{(\gamma)}$.
\end{definition}
It is clear that, after fixed an `initial' coloring $\gamma$ for the `top' of $\B$, any partial sub-braid $\B^a_b$ within $\B$ is colored as $(\B^a_b)^{(\gamma(a))}_{(\gamma(b))}$.

With the notation set up, we have the following proposition allowing us to build complexes for certain semi-infinite braid words.

\begin{proposition}\label{prop:colorpure inv sys}
If $\B_{(\gamma)}$ is a positive semi-infinite color-pure braid word with maximal purity sequence $m_1<m_2<\cdots$, then there is a corresponding inverse system of complexes
\[\{\KC{\B_{m_k}},f_k\}=\KC{\B_{m_0}} \xleftarrow{f_0} \KC{\B_{m_1}} \xleftarrow{f_1} \KC{\B_{m_2}} \xleftarrow{f_2} \cdots\]
where $\B_{m_0}$ is defined to be the colored identity braid $I_{(\gamma)}$ and the maps $f_k$ are the quotient maps implied by Proposition \ref{prop:colorpure complex}.  Furthermore, if this inverse system is Cauchy with limit denoted $\KC{\B_\infty}$ and $\ell_1<\ell_2<\cdots$ is any subsequence of the maximal purity sequence, then there is a corresponding inverse system
\[\{\KC{\B_{\ell_j}},g_j\}=\KC{\B_{\ell_0}} \xleftarrow{g_0} \KC{\B_{\ell_1}} \xleftarrow{g_1} \KC{\B_{\ell_2}} \xleftarrow{g_2} \cdots\]
which is also Cauchy with a chain homotopy equivalent limit
\[\lim \KC{\B_{\ell_j}} \simeq \KC{\B_\infty}.\]
\end{proposition}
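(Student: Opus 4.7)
My plan is to reduce the construction of the inverse system to Proposition \ref{prop:colorpure complex} by partitioning the partial braid words at the maximal purity indices, and then to deduce the subsequence claim from Proposition \ref{prop:compSys} using identity comparison maps.

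First I would observe that since $\gamma(m_k)=\gamma$ for every index $m_k$ in the maximal purity sequence, each partial sub-braid $\B^{m_k}_{m_{k+1}}$ is itself a positive color-pure braid with respect to $\gamma$. Applying Proposition \ref{prop:colorpure complex} to these finite sub-braids yields chain homotopy equivalences $\KC{\B^{m_k}_{m_{k+1}}} \simeq \cone(I_{(\gamma)} \to \Xch_k)$ for some $\Xch_k$ of the form described there. Tensoring on the left with $\KC{\B_{m_k}}$ then gives
\[\KC{\B_{m_{k+1}}} \;=\; \KC{\B_{m_k}} \otimes \KC{\B^{m_k}_{m_{k+1}}} \;\simeq\; \cone\bigl(\KC{\B_{m_k}} \to \KC{\B_{m_k}} \otimes \Xch_k\bigr),\]
using the identification $\KC{\B_{m_k}} \otimes I_{(\gamma)} = \KC{\B_{m_k}}$. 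I define $f_k$ to be the projection onto $\KC{\B_{m_k}}$ coming from this cone presentation. Since each map is built from the cone structure of a distinct partial sub-braid and tensor concatenation is associative, compatibility of these projections into a genuine inverse system is automatic.

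For the second claim, given a subsequence $\ell_1<\ell_2<\cdots$ of the maximal purity sequence, I define the system maps $g_j:\KC{\B_{\ell_{j+1}}}\to\KC{\B_{\ell_j}}$ as the appropriate composition of those original $f_k$ whose source index falls between $\ell_j$ and $\ell_{j+1}$. To bound $|g_j|_\h$ I would invoke the standard fact that for any composition $\Ach\xrightarrow{g}\Bch\xrightarrow{f}\Cch$ of chain maps, the octahedral axiom in the homotopy category produces a distinguished triangle relating $\cone(g)$, $\cone(f\circ g)$ and $\cone(f)$, so that whenever both $\cone(f)$ and $\cone(g)$ are contractible below degree $d$, so too is $\cone(f\circ g)$; consequently $|f\circ g|_\h\geq\min(|f|_\h,|g|_\h)$. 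Iterating, $|g_j|_\h$ is bounded below by the minimum of $|f_k|_\h$ over the indices $k$ involved, which tends to infinity as $j\to\infty$ since the original system is Cauchy. Hence the subsequence system is Cauchy and admits a limit by Theorem \ref{thm:Cauchy}.

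To conclude that $\lim \KC{\B_{\ell_j}} \simeq \KC{\B_\infty}$, I apply Proposition \ref{prop:compSys} with comparison maps $F_j=\mathrm{id}:\KC{\B_{\ell_j}}\to\KC{\B_{m_{z(j)}}}$, where $z(j)$ is the unique index satisfying $m_{z(j)}=\ell_j$. These identity maps commute with both inverse systems by the very construction of $g_j$ as a composition of the corresponding $f_k$, and $|F_j|_\h=\infty$ trivially, so Proposition \ref{prop:compSys} immediately yields the desired equivalence. The main obstacle I anticipate is the bookkeeping required to verify that the cone-based projections $f_k$ compose as strict chain maps (rather than merely up to homotopy) and that the commuting diagram required by Proposition \ref{prop:compSys} holds on the nose; the octahedral estimate for homological orders, though standard, is the one genuinely non-elementary piece of homological algebra invoked.
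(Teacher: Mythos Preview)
Your argument is correct and follows essentially the same route as the paper: both construct $f_k$ as the quotient map coming from applying Proposition \ref{prop:colorpure complex} to the color-pure sub-braid $\B^{m_k}_{m_{k+1}}$, and both compare the subsequence system to the original via identity maps and invoke Proposition \ref{prop:compSys}. The only difference is that the paper leaves to the reader the verification that the subsequence system is Cauchy, whereas you supply that detail via the octahedral-axiom bound $|f\circ g|_\h\geq\min(|f|_\h,|g|_\h)$.
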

\begin{proof}
First we describe the maps $f_k$ in slightly more detail.  Fixing $k$, we let $\beta$ denote the partial sub-braid $\B^{m_k}_{m_{k+1}}$ of crossings in $\B_{m_{k+1}}$ that are not in $\B_{m_k}$; by assumption $\beta$ is color-pure with respect to $\gamma$.  If we use Proposition \ref{prop:colorpure complex} to write the simplified complex $\KCsimp{\beta}$ for $\beta$, and then expand $\KC{\B_{m_{k+1}}}$ along this cone as in Figure \ref{fig:braid as mcone ex}, we see (with a slight abuse of notation)
\[
\KC{\B_{m_{k+1}}}
\qq{\simeq}
\cone\left(
\begin{tikzpicture}[x=2em,y=-2em,baseline={([yshift=-2ex]current bounding box.center)}]
\Bbox[0]{2}{1}{2}{$\B_{m_k}$}
\Bbox[1]{2}{1}{2}{$I$}
\end{tikzpicture}
\qq{\longrightarrow}
\begin{tikzpicture}[x=2em,y=-2em,baseline={([yshift=-2ex]current bounding box.center)}]
\Bbox[0]{2}{1}{2}{$\B_{m_k}$}
\Bbox[1]{2}{1}{2}{$\Xch$}
\end{tikzpicture}
\right).
\]

The map $f_k$ is the quotient map from this cone to the complex $\KC{\B_{m_k}}$.  From this it is clear that a subsequence $\ell_1<\ell_2<\cdots$ has its own set of quotient maps $g_j$ and we can construct a commuting diagram of inverse systems as in Figure \ref{fig:compare inv sys} where all of the horizontal maps are identity maps.  Being a subsequence ensures that our new inverse system is also Cauchy (details here are left to the reader), and since identity maps have infinite homological order, Proposition \ref{prop:compSys} gives us our desired chain homotopy equivalence.
\end{proof}

We can now present the main definitions for semi-infinite braids.

\begin{definition}\label{def:semi-inf braid}
A \emph{semi-infinite braid} $\BB$ is an equivalence class of semi-infinite braid words $\B$, where $\B$ is equivalent to $\B'$ if and only if $\B'$ can be arrived at from $\B$ via a finite set of braid moves.  We call $\BB$ \emph{positive} if some choice of representative word is positive.  We call $\BB$ \emph{color-pure with respect to $\gamma$}, and denote it $\BB_{(\gamma)}$, if any (and thus every) word representing $\BB$ is color-pure (note that this condition is not affected by finitely many braid moves).
\end{definition}

\begin{remark}\label{rmk:fin many braid moves only}
The restriction to allowing only finitely many braid moves was neglected in our earlier papers \cite{MWGI,MAMW}, but it is clearly necessary to avoid situations where a sequence of braid moves starting from the word $\B$ `limits' to a new word $\B'$ that does not share the properties of $\B$.  For instance consider the following sequence of braid moves in $\Bn$ for $n=4$:
\begin{align*}\B &= \sigma_3\sigma_1\sigma_1\sigma_1\cdots\\
& \cong \sigma_1\sigma_3\sigma_1\sigma_1\cdots\\
& \cong \sigma_1\sigma_1\sigma_3\sigma_1\sigma_1\cdots\\
& \quad\vdots
\end{align*}
which would show `in the limit' that $\B$ is equivalent to the infinite twist on the first two strands with \emph{no} occurrence of $\sigma_3$.  We wish to disallow such limiting statements.
\end{remark}

\begin{proposition}\label{prop:colorpure cx well defined}
Let $\BB_{(\gamma)}$ be a positive color-pure semi-infinite braid, and let $\B$ and $\B'$ be two positive semi-infinite words representing $\BB$ with corresponding inverse systems $\{\KC{\B_{m_k}},f_k\}$ and $\left\{\KC{\B'_{m'_\ell}},f'_\ell\right\}$ via Proposition \ref{prop:colorpure inv sys}.  Then there are maps $F_\ell:\KC{\B'_{m'_\ell}}\rightarrow\KC{\B_{m_k}}$ commuting with the system maps such that $|F_\ell|_\h\rightarrow\infty$ as $\ell\rightarrow\infty$.  In particular, if $\{\KC{\B_{m_k}},f_k\}$ is Cauchy, then so is $\left\{\KC{\B'_{m'_\ell}},f'_\ell\right\}$ and their limits are chain homotopy equivalent.
\end{proposition}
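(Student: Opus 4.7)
The plan is to exploit the constraint from Remark \ref{rmk:fin many braid moves only}: since $\B$ and $\B'$ represent the same semi-infinite braid $\BB$, they differ by only \emph{finitely many} braid moves, each of which is a local replacement (commutation or positive braid relation) affecting only a bounded window of positions. Consequently there exists some $N \in \N$ such that $\B$ and $\B'$ agree verbatim past position $N$, i.e.\ $\B^N_\infty = \B'^N_\infty$, while the finite prefixes $\B_N$ and $\B'_N$ are braid-isotopic elements of $\Bn$.

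The next step is to reconcile the two maximal purity sequences. Since $\B_N$ and $\B'_N$ induce the same element of $\Bn$ and hence the same permutation, the coloring at position $N$ satisfies $\gamma_\B(N)=\gamma_{\B'}(N)$, and because the tail crossings agree past $N$, we obtain $\gamma_\B(\ell) = \gamma_{\B'}(\ell)$ for every $\ell \geq N$. Therefore the maximal purity sequences of $\B$ and $\B'$ coincide past $N$, up to a fixed offset of indexing. Using the subsequence statement in Proposition \ref{prop:colorpure inv sys}, I would pass to tail subsequences starting past $N$ and reindex so that the two resulting purity sequences are literally equal; the resulting inverse systems have equivalent limits to the original systems.

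With this synchronization in hand, for each common position $m = m_k = m'_\ell > N$ we have factorizations $\B_{m_k} = \B_N \cdot \B^N_{m}$ and $\B'_{m'_\ell} = \B'_N \cdot \B^N_{m}$ with a shared suffix. The braid isotopy $\B_N \sim \B'_N$ furnishes, via invariance of $\KC{\cdot}$ under braid-like Reidemeister moves, a chain homotopy equivalence on the prefix factor, which under planar concatenation extends to a chain homotopy equivalence $F_\ell : \KC{\B'_{m'_\ell}} \xrightarrow{\simeq} \KC{\B_{m_k}}$ acting as the identity on the shared suffix $\B^N_{m}$. The system maps $f_k$ and $f'_\ell$ are themselves quotient maps coming from the cone decomposition (Proposition \ref{prop:colorpure complex}) of the shared sub-braid $\B^{m_k}_{m_{k+1}} = \B^{m'_\ell}_{m'_{\ell+1}}$, which lives entirely inside the suffix where $F_\ell$ is the identity. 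The commutation $F_\ell \circ f'_\ell = f_k \circ F_{\ell+1}$ therefore holds on the nose.

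Finally, each $F_\ell$ being a chain homotopy equivalence means $|F_\ell|_\h = \infty$, so Proposition \ref{prop:compSys} immediately delivers the equivalence of limits once one system is Cauchy. The Cauchy condition transfers between the two systems because the commutative square with homotopy-equivalent vertical arrows $F_\ell, F_{\ell+1}$ forces $\cone(f_k) \simeq \cone(f'_\ell)$, hence $|f_k|_\h = |f'_\ell|_\h$. The only genuinely subtle point is the construction of the prefix equivalence: one must verify that the finite sequence of positive braid moves relating $\B_N$ to $\B'_N$ lifts to an honest chain map (rather than merely a zig-zag through intermediate braid words) so that the strict commutation above is meaningful. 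This is guaranteed by the functoriality of $\KC{\cdot}$ on the category of positive braids up to chain homotopy, so composing the standard Reidemeister equivalence chain maps for each move in the sequence yields the desired $F_\ell$.
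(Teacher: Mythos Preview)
Your argument is correct and follows essentially the same route as the paper's proof: both exploit the finiteness of braid moves to find a position past which the two words literally agree, build $F_\ell$ from the induced braid-isotopy equivalence on the finite prefix (acting as the identity on the shared tail), and conclude via Proposition~\ref{prop:compSys} since each $F_\ell$ is a homotopy equivalence with $|F_\ell|_\h=\infty$. The only cosmetic differences are that the paper selects its cutoff directly along the maximal purity sequences (indices $\kappa,\lambda$ with $\B_{m_\kappa}\cong\B'_{m'_\lambda}$ and $\B^{m_\kappa}_\infty=\B'^{m'_\lambda}_\infty$) rather than at a raw position $N$ followed by a subsequence argument, and that the paper explicitly disposes of the early maps $F_\ell$ for $\ell\leq\lambda$ by sending them to the identity via Proposition~\ref{prop:colorpure complex}; you should note that your appeal to the subsequence clause of Proposition~\ref{prop:colorpure inv sys} technically presupposes the Cauchy hypothesis, so it belongs to the ``in particular'' half of the statement rather than to the construction of the $F_\ell$ themselves.
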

\begin{proof}
If $\B$ and $\B'$ are related by finitely many Reidemeister moves, then there exists some $\kappa$ and $\lambda$ such that the partial braid words $\B_{m_\kappa}$ and $\B'_{m'_\lambda}$ are braid isotopic, and the truncated semi-infinite braid words $\B^{m_\kappa}_\infty$ and $\B'^{m'_\lambda}_\infty$ are equal.
\[\B_{m_\kappa} \cong \B'_{m'_\lambda} \qq{,}  \B^{m_\kappa}_\infty = \B'^{m'_\lambda}_\infty\]
The maps $F_\ell$, for $\ell>\lambda$, are the chain homotopy equivalences induced by this braid isotopy while keeping the `later' crossings $\B'^{m'_\lambda}_{m_\ell}$ fixed.  Since the system maps of Proposition \ref{prop:colorpure inv sys} beyond this point rely only on resolutions of color-pure braids within $\B'^{m'_\lambda}_{m'_\ell}$ and $\B^{m_\kappa}_{m_k}$, our maps $F_\ell$ trivially commute with the system maps.  

Furthermore, because each of these $F_\ell$ are chain homotopy equivalences, we have $|F_\ell|_\h = \infty$ and $f'_\ell \sim F_{\ell-1}^{-1} f_k F_\ell$ so that $\left\{\KC{\B'_{m'_\ell}},f'_\ell\right\}$ is Cauchy and we are done via Proposition \ref{prop:compSys}.  (The maps $F_\ell$ for $\ell\leq \lambda$ are irrelevant and can be taken to be projection maps to the identity $\B_0$ via Proposition \ref{prop:colorpure complex}, trivially commuting with all system maps.)  
\end{proof}

Thus for a positive semi-infinite color-pure braid $\BB_{(\gamma)}$, we have a well-defined inverse system up to maps $F_\ell$ connecting any two such systems arising from different (positive) words representing $\BB_{(\gamma)}$.  If any such word provides a Cauchy inverse system, we have a well-defined limiting system $\KC{\BB_{(\gamma)}}$ up to chain homotopy equivalence.  The requirement that we consider only positive representative words is only a crutch for the moment; we will see in Section \ref{sec:neg crossings} that allowing words with negative crossings produces degree shifts (as is to be expected; we've already remarked upon this fact for Reidemeister II moves in our normalization) but does not change the nature of our results.

\subsection{The infinite full twist}\label{sec:Inf Twist}
Ultimately, we will use Proposition \ref{prop:compSys} to compare our Cauchy inverse systems to a well understood Cauchy inverse system coming from studying the infinite full twist which we now discuss.

Fix $n\in\N$.  We begin by noting that a single full twist $\FT$ is a positive pure braid (and thus is color pure with respect to any coloring $\gamma$), so that the semi-infinite braid word $\FT^\infty:=\FT\FT\FT\cdots$ is color-pure with respect to any $\gamma$ via the sequence of partial braid words defined by $(\FT^\infty)_{m_k}:=\FT^k$.  We can therefore speak of the colored infinite full twist $\FT_{(\gamma)}^\infty$.  In \cite{RozInf}, Rozansky proved that the standard (uncolored, $\mathfrak{sl}_2$) Khovanov complex of the infinite full twist was a well-defined complex and that it categorified the Jones-Wenzl projector.  We now state the analogous result in colored Khovanov-Rozansky homology due to Cautis.

\begin{theorem}[Cautis \cite{Cau12}]\label{thm:infFT}
Let $\FT_{(\gamma)} \in \Bn$ denote the positive full twist braid on $n$ strands colored by $\gamma$.  Then, taking all complexes over the ground ring $\mathbb{C}$,
\[\lim_{k \to \infty} \KC{\FT^k_{(\gamma)}}\]
is a well-defined idempotent chain complex categorifying a highest weight projector 
\[p_{n,(\gamma)}: \bigotimes_{i=1}^n\left(\bigwedge^{\gamma_i} (\CC_q^N)\right) \rightarrow \bigotimes_{i=1}^n\left(\bigwedge^{\gamma_i} (\CC_q^N)\right)\]
factoring through the unique highest weight subrepresentation of $\otimes_{i=1}^n(\bigwedge^{\gamma_i} (\CC_q^N))$. 
\end{theorem}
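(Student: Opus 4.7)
The plan is to place Cautis's theorem inside the paper's own framework via Proposition \ref{prop:colorpure inv sys}. Since $\FT^\infty_{(\gamma)}$ is positive and color-pure with respect to any $\gamma$ (with maximal purity sequence $m_k = k$), that proposition produces an inverse system $\{\KC{\FT^k_{(\gamma)}}, f_k\}$ in which each $f_k \colon \KC{\FT^{k+1}_{(\gamma)}} \to \KC{\FT^k_{(\gamma)}}$ is the quotient onto the identity summand guaranteed by Proposition \ref{prop:colorpure complex}. If I can show this system is Cauchy, then Theorem \ref{thm:Cauchy} gives the well-defined limit complex; idempotency follows from Lemma \ref{lem:Lim of product} together with the obvious equivalence $\KC{\FT_{(\gamma)}} \otimes \KC{\FT^k_{(\gamma)}} \simeq \KC{\FT^{k+1}_{(\gamma)}}$, so that the limit absorbs one further full twist up to homotopy. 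Identification of the resulting idempotent with the highest-weight projector $p_{n,(\gamma)}$ then follows by decategorification, using uniqueness of the projector onto the top weight subspace of $\bigotimes_i \bigwedge^{\gamma_i}\CC_q^N$ in the relevant Hecke-algebra completion.

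The technical heart is the Cauchy estimate, which I would prove by induction on $\cs{\gamma}$. In the base case, $\cs{\gamma} = 0$ means $\gamma$ has at most one nonzero entry, so $\FT_{(\gamma)}$ acts essentially as a shifted identity and $f_k$ is an isomorphism. For the inductive step, Proposition \ref{prop:colorpure complex} applied to one copy of $\FT_{(\gamma)}$ gives $\KC{\FT_{(\gamma)}} \simeq \cone(I_{(\gamma)} \to \Xch_\FT)$, where each term $\delta_x \in \Xch_\FT$ is a ladder with an intermediate coloring $\gamma_x$ satisfying $\cs{\gamma_x} < \cs{\gamma}$. Propagating this cone through the tensor product with $\KC{\FT^k_{(\gamma)}}$ via Proposition \ref{prop:gen multicone equiv}, the cone of $f_k$ becomes (up to a positive homological shift) a multicone whose terms have the form $\KC{\FT^k_{(\gamma)}} \otimes \delta_x$.

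For each such term I would pull the opening and closing rungs of $\delta_x$ upward through all $k$ copies of $\FT_{(\gamma)}$ by repeated rung slides and rung twists (Figures \ref{fig:rungslide}, \ref{fig:rungtwist}), re-sandwiching the $k$ full twists between the rungs so that their central strands are now colored by $\gamma_x$ instead of $\gamma$. Propositions \ref{prop:isotopyShifts} and \ref{prop:FT colorsize} record this web-braid isotopy as a homological shift of
\[t \;=\; 2k\bigl(\cs{\gamma} - \cs{\gamma_x}\bigr) \;\geq\; 2k,\]
and the rearranged diagram is still a positive web-braid whose complex lives in non-negative homological degree. Re-assembling the simplified multicone via Proposition \ref{prop:gen multicone equiv} shows $\cone(f_k)$ is chain equivalent to a complex concentrated in homological degrees at least $2k$, so that $|f_k|_\h \geq 2k \to \infty$, proving Cauchy.

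The main obstacle I anticipate is the uniformity of the rung-sliding argument across all terms $\delta_x$ of $\Xch_\FT$: each has a different rung geometry and a different intermediate $\gamma_x$, so the rearranged diagrams are not individually identical, and combining the resulting term-by-term chain equivalences into a single global equivalence on the whole multicone requires careful reapplication of Proposition \ref{prop:gen multicone equiv}. A secondary subtlety is that the rearranged diagram contains an inner tower $\KC{\FT^k_{(\gamma_x)}}$ at strictly smaller color size, on which one could in principle invoke the inductive hypothesis for sharper information; for the Cauchy bound itself, however, the bare positivity of homological degree on this inner tower is enough.
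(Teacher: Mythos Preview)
The paper does not prove Theorem \ref{thm:infFT}; it is stated as Cautis's result and simply cited to \cite{Cau12} (see also Remark \ref{rmk-Rickard}).  What the paper \emph{does} prove is the closely related Theorem \ref{thm:infFT with max purity seq}, which establishes that the inverse system for $\FT^\infty_{(\gamma)}$ is Cauchy over any ground ring and that its limit recovers Cautis's $P_{n,(\gamma)}$ over $\CC$.  Your Cauchy argument---apply Proposition \ref{prop:colorpure complex} to one color-pure piece, expand $\cone(f_k)$ as a multicone, commute the intermediate rungs past the remaining full twists via Figure \ref{fig:pull rung thru twist}, and invoke Propositions \ref{prop:isotopyShifts} and \ref{prop:FT colorsize} to get the shift $t_x = 2\ell(\cs{\gamma}-\cs{\gamma_x})\geq 2\ell$---is essentially identical to the paper's proof of that theorem.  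So the core of your proposal matches the paper.

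A few corrections and comments.  First, the maximal purity sequence for $\FT^\infty_{(\gamma)}$ is not $m_k=k$: a single full twist already has $n(n-1)$ crossings, and depending on $\gamma$ there may be further intermediate purity points.  The paper works with the genuine maximal purity sequence $p_1<p_2<\cdots$ and then notes that Cautis's sequence of complete full twists is a subsequence, so Proposition \ref{prop:colorpure inv sys} identifies the two limits.  Second, your induction on $\cs{\gamma}$ is set up but never used: as you yourself observe at the end, the bound $|f_k|_\h\geq 2k$ follows directly from positivity of the rearranged web-braid complex, with no appeal to the inductive hypothesis.  The paper likewise does not induct.  Third, the portions of Theorem \ref{thm:infFT} that go beyond the Cauchy estimate---idempotency of the limit and identification of its decategorification with the highest-weight projector $p_{n,(\gamma)}$---are precisely the parts the paper defers to Cautis.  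Your sketch for these (absorb one more twist via Lemma \ref{lem:Lim of product}; decategorify and invoke uniqueness) is reasonable but is not carried out in the paper, which simply cites \cite{Cau12} for them.
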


In this text, we will write $P_{n,(\gamma)}:= \lim_{k \to \infty} \KC{\FT^k_{(\gamma)}}$ for Cautis' limiting complex for the sake of brevity.

\begin{remark} \label{rmk-Rickard}
Strictly speaking, Theorem \ref{thm:infFT} is proven for Rickard complexes in the categorification of $\qsln$. However this category is equivalent to $\foamC$ and thus can be viewed as a result for complexes associated to full twist braids in $\foamC$ (See \cite{Cau12} and \cite{QR} for more details).
\end{remark} 

As indicated above, Cautis' proof assumes that the ground ring is actually the field $\mathbb{C}$.  Furthermore, Cautis' complex is based on taking the inverse system built from the sequence of complete full twists, which may not necessarily be the maximal purity sequence for the semi-infinite colored braid $\FT_{(\gamma)}^\infty$.  In the theorem below we will prove the existence of this limiting complex over any ground ring using the maximal purity sequence for the given coloring $\gamma$, and then Proposition \ref{prop:colorpure inv sys} will show that our maximal purity sequence version recovers $P_{n,(\gamma)}$ in the case that the ground ring was $\mathbb{C}$.  The proof of this theorem provides a good warm-up for the proof of Theorem \ref{thm:imprecise main thm} to come afterwards.

\begin{theorem}\label{thm:infFT with max purity seq}
Fix a coloring $\gamma$ and let $p_1<p_2<\cdots$ denote the maximal purity sequence for the colored infinite full twist $\infFT:=\FT_{(\gamma)}^\infty$.  Then the corresponding inverse system $\{\KC{\infFT_{p_k}},f_k\}$ is Cauchy with limiting complex denoted
\[ P_{n,(\gamma)}:=\KC{\infFT_\infty}.\]
In particular, when the ground ring is $\mathbb{C}$, this limiting complex is chain homotopy equivalent to the complex of Cautis in Theorem \ref{thm:infFT}, justifying the notation.
\end{theorem}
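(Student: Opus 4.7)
The plan is to bound the homological order $|f_k|_\h$ of each system map from below and show it tends to infinity, by directly analysing $\cone(f_k)$ via Proposition \ref{prop:colorpure complex} and the rung-sliding shifts of Propositions \ref{prop:isotopyShifts} and \ref{prop:FT colorsize}. The key numerical observation is that because $\FT$ is pure, every positive multiple of $n(n-1)$ lies in the maximal purity sequence, so the partial braid $\infFT_{p_k}$ contains at least $r_k := \lfloor p_k / n(n-1) \rfloor$ complete full twists, with $r_k \to \infty$ as $k \to \infty$.

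Writing $\beta_k := \infFT^{p_k}_{p_{k+1}}$ for the intermediate color-pure sub-braid, Proposition \ref{prop:colorpure complex} provides $\KC{\beta_k} \simeq \cone(I_{(\gamma)} \to \Xch_k)$ with $\Xch_k$ a direct summand of a complex whose terms sit in homological degree $\geq 0$ and each carry an intermediate coloring of strictly smaller color size than $\gamma$. Concatenating with $\KC{\infFT_{p_k}}$ yields
\[
\KC{\infFT_{p_{k+1}}} \simeq \cone\bigl(\KC{\infFT_{p_k}} \to \KC{\infFT_{p_k}} \otimes \Xch_k\bigr),
\]
with $f_k$ the quotient map to the first summand. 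A standard cone manipulation (Gaussian elimination along the identity component) identifies $\cone(f_k)$ with $\h^1(\KC{\infFT_{p_k}} \otimes \Xch_k)$, which via Proposition \ref{prop:gen multicone equiv} I view as a multicone indexed by terms $\delta_x \in \Xch_k$, with blocks $\KC{T_x}$ for the web-braid $T_x := \infFT_{p_k} \cdot \delta_x$. For each such block, letting $\gamma_x$ denote the intermediate coloring of $\delta_x$, I will slide the top rungs of $\delta_x$ upward through every crossing of the $r_k$ complete full twists in $\infFT_{p_k}$ using the elementary moves of Figures \ref{fig:rungslide} and \ref{fig:rungtwist}, producing an isotopic web-braid $T_x'$ in which those $r_k$ full twists are now colored by $\gamma_x$ rather than $\gamma$. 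Propositions \ref{prop:isotopyShifts} and \ref{prop:FT colorsize} combined then give
\[
\KC{T_x} \simeq \h^{t_x}\q^{s_x}\KC{T_x'}, \qquad t_x \,\geq\, 2 r_k (\cs{\gamma} - \cs{\gamma_x}) \,\geq\, 2 r_k.
\]
Replacing each block by its shifted equivalent via Proposition \ref{prop:gen multicone equiv} presents $\cone(f_k)$ as a complex concentrated in homological degrees $\geq 2 r_k$, so $|f_k|_\h \geq 2 r_k \to \infty$ and the inverse system is Cauchy with limit $\KC{\infFT_\infty}$ by Theorem \ref{thm:Cauchy}.

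To conclude, when the ground ring is $\CC$, the subsequence $\ell_j := j \cdot n(n-1)$ lies in the maximal purity sequence and realises exactly Cautis' inverse system $\{\KC{\FT^j_{(\gamma)}}\}$ from Theorem \ref{thm:infFT}. Proposition \ref{prop:colorpure inv sys} forces the limit of this subsystem to be chain homotopy equivalent to $\KC{\infFT_\infty}$, so the two limits coincide up to chain homotopy, justifying the notation $P_{n,(\gamma)}$. The main technical obstacle is the rung-slide step: one must carefully verify that the top rungs of each $\delta_x$ can be pushed through all the crossings of $r_k$ consecutive full twists via a legitimate sequence of rung slides and twists, and that the cumulative change in $\sum \min(\tau)$ tallied by Proposition \ref{prop:isotopyShifts} agrees with $r_k$ applications of the color-size computation of Proposition \ref{prop:FT colorsize}. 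This bookkeeping mirrors the uni-colored strategy of \cite{MAMW}, with the positive integer $\cs{\gamma}-\cs{\gamma_x}$ taking over the role played there by the raw strand count.
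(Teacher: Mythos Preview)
Your argument is correct and follows essentially the same strategy as the paper. Two minor refinements are worth noting: the paper expands the multicone along the larger complex $\Cch_k$ rather than $\Xch_k$ (since Proposition \ref{prop:colorpure complex} guarantees the intermediate-coloring property only for terms of $\Cch_k$, after which the degree bound transfers to the direct summand $\cone(f_k)\simeq\KC{\infFT_{p_k}}\otimes\Xch_k$), and it replaces your crossing-by-crossing rung slides with a single appeal to centrality of $\FT$ via Figure \ref{fig:pull rung thru twist}, commuting the whole web-braid $\alpha\cdot\delta_x^{top}$ past $\FT^{r_k}$ so that only the full-twist crossings change color and the shift $2r_k(\cs{\gamma}-\cs{\gamma_x})$ falls out without the bookkeeping you flag.
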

\begin{proof}
By definition, we must show that the homological orders of the maps $|f_k|_\h$ grow infinite as $k\rightarrow\infty$.

Fix $k>0$.  As in the proof of Proposition \ref{prop:colorpure inv sys}, we apply Proposition \ref{prop:colorpure complex} to the (color-pure) partial sub-braid $\infFT^{p_k}_{p_{k+1}}$.  Then $f_k$ is defined as a quotient map on the corresponding cone (we will omit the notations $\KC{\cdot}$ as usual in the visual presentations):
\[
\cone\left(
\begin{tikzpicture}[x=2em,y=-2em,baseline={([yshift=-2ex]current bounding box.center)}]
\Bbox[0]{2}{1}{2}{$\infFT_{p_k}$}
\Bbox[1]{2}{1}{2}{$I$}
\end{tikzpicture}
\qq{\longrightarrow}
\begin{tikzpicture}[x=2em,y=-2em,baseline={([yshift=-2ex]current bounding box.center)}]
\Bbox[0]{2}{1}{2}{$\infFT_{p_k}$}
\Bbox[1]{2}{1}{2}{$\Xch$}
\end{tikzpicture}
\right) \qq{\xrightarrow{f_k}}
\begin{tikzpicture}[x=2em,y=-2em,baseline={([yshift=-2ex]current bounding box.center)}]
\Bbox[0]{2}{1}{2}{$\infFT_{p_k}$}
\Bbox[1]{2}{1}{2}{$I$}
\end{tikzpicture}
\qq{=} \begin{tikzpicture}[x=2em,y=-2em,baseline={([yshift=-2ex]current bounding box.center)}]
\Bbox[0]{2}{1}{2}{$\infFT_{p_k}$}
\end{tikzpicture}.
\]
As such, we have $\cone(f_k)\simeq \KC{ \infFT_{p_k} }\otimes \Xch$
and since $\Xch$ is a direct summand of a complex $\Cch$ whose terms satisfy the properties of Proposition \ref{prop:colorpure complex}, $\cone(f_k)$ is likewise chain homotopy equivalent to a direct summand of the complex $\KC{\infFT_{p_k}} \otimes \Cch$.

We now expand the complex $\KC{\infFT_{p_k}} \otimes \Cch$ as a multicone along $\Cch$ as expressed visually below:
\[
\begin{tikzpicture}[x=2em,y=-2em,baseline={([yshift=-2ex]current bounding box.center)}]
\Bbox[0]{2}{1}{2}{$\infFT_{p_k}$}
\Bbox[1]{2}{1}{2}{$\Cch$}
\end{tikzpicture}
 \qq{\simeq}
\underset{\delta_x,\delta_y\in\Cch}{\Mcone}\left( 
\begin{tikzpicture}[x=2em,y=-2em,baseline={([yshift=-2ex]current bounding box.center)}]
\Bbox[0]{2}{1}{2}{$\infFT_{p_k}$}
\Bbox[1]{2}{1}{2}{$\delta_x$}
\end{tikzpicture}
\qq{\longrightarrow}
\begin{tikzpicture}[x=2em,y=-2em,baseline={([yshift=-2ex]current bounding box.center)}]
\Bbox[0]{2}{1}{2}{$\infFT_{p_k}$}
\Bbox[1]{2}{1}{2}{$\delta_y$}
\end{tikzpicture}
\right).
\]

From here, we consider terms $\infFT_{p_k} \cdot \delta_x$ within this multicone.  First of all, since $\infFT=\FT\FT\cdots$ we must have
\[
\begin{tikzpicture}[x=2.5em,y=-2em,baseline={([yshift=-2ex]current bounding box.center)}]
\Bbox[0]{2}{1}{2}{$\infFT_{p_{k}}$}
\end{tikzpicture}
\qq{=}
\begin{tikzpicture}[x=2.5em,y=-2em,baseline={([yshift=-2ex]current bounding box.center)}]
\Bbox[0]{2}{1}{2}{$\FT^\ell$}
\Bbox[1]{2}{1}{2}{$\alpha$}
\end{tikzpicture}
\]
for some $\ell\geq 0$ and some color-pure braid $\alpha$.  Note that we must have $\ell\rightarrow\infty$ as $k\rightarrow\infty$.  Meanwhile, Proposition \ref{prop:colorpure complex} shows that $\delta_x$ has an intermediate coloring $\gamma_x$ with $\cs{\gamma_x}\leq\cs{\gamma}-1$.  We use this fact to write $\delta_x$ as a concatenation at this intermediate coloring:
\[
\begin{tikzpicture}[x=2em,y=-2em,baseline={([yshift=-2ex]current bounding box.center)}]
\Bbox[0]{2}{1}{2}{$\delta_x$}
\smnode at (-.3,0){$\gamma$};
\smnode at (-.3,1){$\gamma$};
\end{tikzpicture}
\qq{=}
\begin{tikzpicture}[x=2em,y=-2em,baseline={([yshift=-2ex]current bounding box.center)}]
\Bbox[0]{2}{1}{2}{$\delta_x^{top}$}
\Bbox[1]{2}{1}{2}{$\delta_x^{bot}$}
\smnode at (-.3,0){$\gamma$};
\smnode at (-.3,1){$\gamma_x$};
\smnode at (-.3,2){$\gamma$};
\end{tikzpicture}
\]
where the colorings $\gamma$ and $\gamma_x$ are indicated at various points of the diagram.  Thus, our multicone for $\KC{\infFT_{p_k}} \otimes \Cch$ is comprised entirely of complexes of the form
\[
\begin{tikzpicture}[x=2em,y=-2em,baseline={([yshift=-2ex]current bounding box.center)}]
\Bbox[0]{2}{1}{2}{$\FT^\ell$}
\Bbox[1]{2}{1}{2}{$\alpha$}
\Bbox[2]{2}{1}{2}{$\delta_x^{top}$}
\Bbox[3]{2}{1}{2}{$\delta_x^{bot}$}
\smnode at (-.3,0){$\gamma$};
\smnode at (-.3,2){$\gamma$};
\smnode at (-.3,3){$\gamma_x$};
\smnode at (-.3,4){$\gamma$};
\end{tikzpicture}\quad .
\]

Now we use the fact, described in Remark 3.17 of \cite{MAMW}, that the web-braid diagram $\alpha\cdot \delta_x^{top}$ commutes with the full twists via a braid-like isotopy that `pulls rungs through the interior of the torus' as illustrated for a single rung in Figure \ref{fig:pull rung thru twist} (pulling through $\alpha$ is merely restating the well-known fact that the full twist is central in the braid group).  Thus we have
\[
\begin{tikzpicture}[x=2em,y=-2em,baseline={([yshift=-2ex]current bounding box.center)}]
\Bbox[0]{2}{1}{2}{$\FT^\ell$}
\Bbox[1]{2}{1}{2}{$\alpha$}
\Bbox[2]{2}{1}{2}{$\delta_x^{top}$}
\Bbox[3]{2}{1}{2}{$\delta_x^{bot}$}
\smnode at (-.3,0){$\gamma$};
\smnode at (-.3,2){$\gamma$};
\smnode at (-.3,3){$\gamma_x$};
\smnode at (-.3,4){$\gamma$};
\end{tikzpicture}
\qq{\cong}
\begin{tikzpicture}[x=2em,y=-2em,baseline={([yshift=-2ex]current bounding box.center)}]
\Bbox[0]{2}{1}{2}{$\alpha$}
\Bbox[1]{2}{1}{2}{$\delta_x^{top}$}
\Bbox[2]{2}{1}{2}{$\FT^\ell$}
\Bbox[3]{2}{1}{2}{$\delta_x^{bot}$}
\smnode at (-.3,0){$\gamma$};
\smnode at (-.3,1){$\gamma$};
\smnode at (-.3,2){$\gamma_x$};
\smnode at (-.3,3){$\gamma_x$};
\smnode at (-.3,4){$\gamma$};
\end{tikzpicture}
\]

\begin{figure}
\centering
\scalebox{-1}[-1]{\includegraphics[scale=.4]{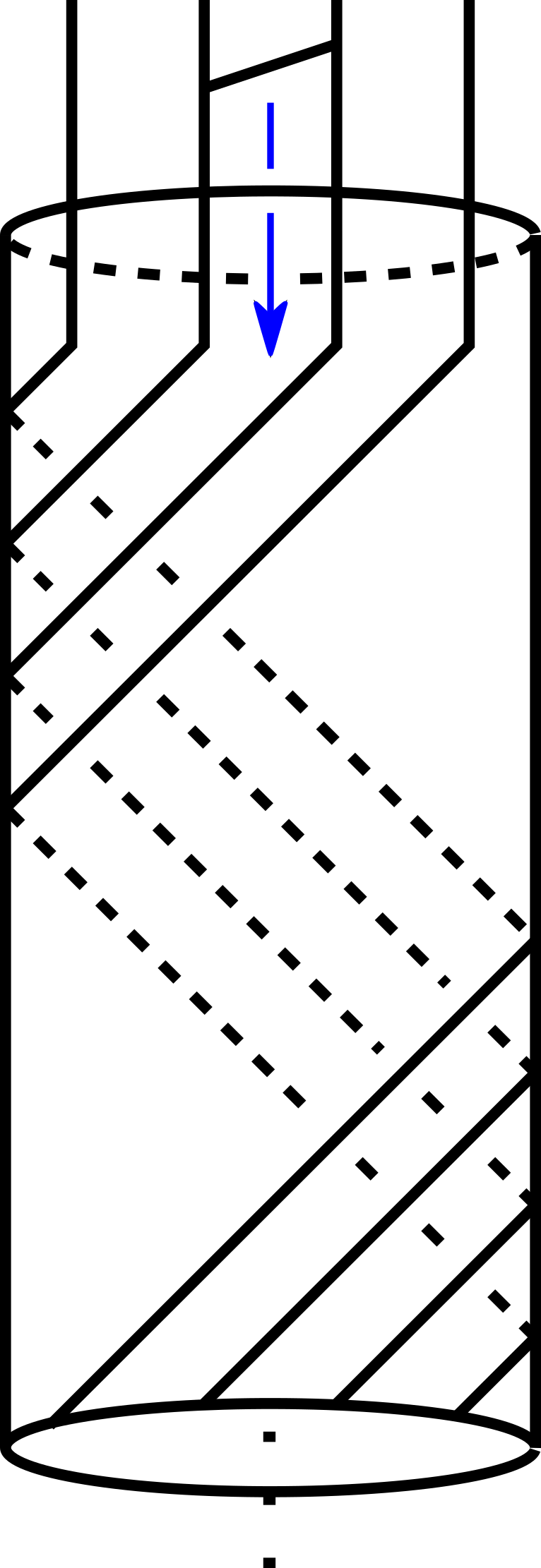}}
\caption{An illustration of pulling a rung through a full twist $\FT$ via passing through the interior of the torus defining $\FT$.  Since the full twist is central in the braid group, we conclude that any web-braid diagram commutes with $\FT$.}
\label{fig:pull rung thru twist}
\end{figure} 

This isotopy produces a homological shift $t_x$ according to Proposition \ref{prop:isotopyShifts} that can be estimated with the help of Proposition \ref{prop:FT colorsize}.  Since all of the crossings in the web-braid remain colored the same except those within the full twist, we have
\begin{align*}
t_x &= \sum_{\tau\in\FT^\ell_{(\gamma)}} \min(\tau) - \sum_{\chi\in\FT^\ell_{(\gamma_x)}} \min(\chi)\\
&= 2\ell(\cs{\gamma}-\cs{\gamma_x})\\
&\geq 2\ell.
\end{align*}

As such, every complex within the multicone expansion for $\KC{\infFT_{p_k}} \otimes \Cch$ can be replaced (up to chain homotopy equivalence, via Proposition \ref{prop:gen multicone equiv}) by one of the form $\h^{t_x}\KC{\alpha\cdot\delta_x^{top}\cdot\FT^\ell\cdot\delta_x^{bot}}$, with minimum non-zero homological degree bounded below by $t_x$ (all crossings in the web-braid diagram are positive).  Thus any diagram $\epsilon$ coming from such a complex within our multicone for $\KC{\infFT_{p_k}} \otimes \Cch$ has overall homological degree estimated with the help of Equation \eqref{eq:gen multicone hom deg}:
\[\h(\epsilon) \geq \h_C(\delta_x) + t_x \geq 2\ell,\]
where we have used Proposition \ref{prop:colorpure complex} to ensure that $\h_C(\delta_x)\geq 0$.  As such, our complex $\KC{\infFT_{p_k}} \otimes \Cch$ is chain homotopy equivalent to one supported in homological degrees greater than or equal to $2\ell$.  Then since $\cone(f_k)\simeq\KC{\infFT_{p_k}} \otimes \Xch$ is a direct summand of this complex, we can conclude that
\[|f_k|_\h \geq 2\ell\]
and as mentioned above, $\ell\rightarrow\infty$ as $k\rightarrow\infty$.  This completes the proof that the inverse system $\{\KC{\infFT_{p_k}},f_k\}$ is Cauchy.

In the case when the ground ring is $\mathbb{C}$, Cautis' inverse system amounts to taking the limit using a subsequence of the maximal purity sequence for the infinite twist, and so Proposition \ref{prop:colorpure inv sys} shows that our limiting complex is chain homotopy equivalent to his.
\end{proof}



\subsection{Color-complete semi-infinite braid words}\label{sec:color complete}

Informally, a color-pure semi-infinite braid word $\B$ is called color-complete if one could arrive at the infinite full twist $\FT^\infty$ by deleting some color-pure braid words from the infinite word $\B$.  In order to make this notion more precise, we need to establish some notation.

\begin{definition}\label{def:sub-braid colorings, deletions}
Let $\B=\sigma_{i_1}\sigma_{i_2}\cdots$ denote a positive semi-infinite braid word that is color-pure with respect to $\gamma$.  Given $a<b$, the notation $\Bminus{\B} a b$ will indicate the positive semi-infinite braid word
\[\Bminus{\B}ab:= \sigma_{i_1}\sigma_{i_2}\cdots \sigma_{i_a} \sigma_{i_{b+1}}\sigma_{i_{b+2}}\cdots\]
derived from $\B$ by deleting the partial sub-braid $\B^a_b$.  Similarly, we will use the notation $\Bminusss{\B}{a_1}{b_1}{a_2}{b_2}$ to denote the word derived from $\B$ by deleting multiple (possibly infinitely many) partial sub-braids.  See Figure \ref{fig:partial sub-braids, colorings, deletions} for a visual illustration.
\end{definition}

\begin{lemma}\label{lem:delete colorpure sub-braid}
If $\B$ is a positive semi-infinite braid word that is color-pure with respect to $\gamma$, and the partial sub-braid $\B^a_b$ is color-pure with respect to $\gamma(a)$, then $\Bminus\B ab$ is also a positive semi-infinite braid word that is color-pure with respect to $\gamma$.
\end{lemma}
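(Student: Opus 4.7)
The plan is to verify the two required properties of $\Bminus{\B}{a}{b}$ directly from the definitions, with the only nontrivial input being the color-purity hypothesis on the sub-braid $\B^a_b$.

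First I would dispense with positivity: $\Bminus{\B}{a}{b}$ is obtained from $\B$ by deleting the letters in positions $a+1,\ldots,b$, and since $\B$ was positive, every remaining letter is still a positive generator $\sigma_i$. So positivity is immediate.

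The main content is computing the coloring sequence $\gamma'(\cdot)$ of $\B' := \Bminus{\B}{a}{b}$ and comparing it to $\gamma(\cdot)$. For $\ell \leq a$, the first $\ell$ letters of $\B'$ coincide with those of $\B$, so $\gamma'(\ell) = \gamma(\ell)$. For $\ell > a$, I would show by induction on $\ell$ that $\gamma'(\ell) = \gamma(\ell + (b-a))$. The base case is $\ell = a$: by hypothesis $\B^a_b$ is color-pure with respect to $\gamma(a)$, which by Definition \ref{def:colorpure braids} forces $\gamma(b) = \gamma(a)$, i.e., $\gamma'(a) = \gamma(a) = \gamma(a + (b-a))$. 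For the inductive step, note that the $(\ell+1)$-th letter of $\B'$ is precisely $\sigma_{i_{\ell+1+(b-a)}}$, which is the $(\ell+1+(b-a))$-th letter of $\B$; since the coloring just above this letter matches in $\B$ and $\B'$ by induction, the resulting coloring below agrees as well. This is the only place where the color-purity of $\B^a_b$ gets used, so this is the step that really needs care, but it is elementary once written out.

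Finally, color-purity of $\B'$ with respect to $\gamma$ follows by a shift argument: since $\B$ is color-pure with respect to $\gamma$, there are infinitely many $k \geq b$ with $\gamma(k) = \gamma$; setting $\ell = k - (b-a) \geq a$ gives $\gamma'(\ell) = \gamma(k) = \gamma$, producing infinitely many such $\ell$. The main obstacle, if there is one, is just keeping the index bookkeeping straight, since the deletion changes the absolute positions of the letters past position $a$; but there are no technical difficulties beyond that.
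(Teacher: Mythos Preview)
Your proposal is correct and is precisely the argument the paper has in mind: the paper's proof merely observes that deleting a color-pure sub-braid leaves the colorings before and after it unchanged (with a shift in the maximal purity sequence past the deletion) and explicitly leaves the details to the reader. Your write-up supplies exactly those details, and the index bookkeeping you carry out is right.
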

\begin{proof} Deleting a color-pure sub-braid within an infinite braid word does not affect the colorings either before or after it.  However, the maximal purity sequence may be shifted beyond the point where $\B^a_b$ was deleted.  See Figure \ref{fig:partial sub-braids, colorings, deletions}.  Details are left to the reader.
\end{proof}

\begin{figure}
\[
\begin{tikzpicture}[x=2em,y=-2em,baseline={([yshift=-2ex]current bounding box.center)}]
\Bbox[0]{2}{1}{2}{$\B_\ell$}
\smnode at (-.4,0){$\gamma$};
\smnode at (-.4,1){$\gamma(\ell)$};
\end{tikzpicture}
\qq{=}
\begin{tikzpicture}[x=2em,y=-2em,baseline={([yshift=-2ex]current bounding box.center)}]
\Bbox[0]{2}{1}{2}{$\B^0_a$}
\Bbox[1]{2}{1}{2}{$\B^a_b$}
\Bbox[2]{2}{1}{2}{$\B^b_c$}
\Bsigma[3]{2}{0}
\Bbox[4]{2}{1}{2}{$\B^e_\ell$}
\smnode at (.5,3.5){$\vdots$};
\smnode at (-.4,0){$\gamma(0)$};
\smnode at (-.4,1){$\gamma(a)$};
\smnode at (-.4,2){$\gamma(b)$};
\smnode at (-.4,3){$\gamma(c)$};
\smnode at (-.4,5){$\gamma(\ell)$};
\end{tikzpicture}
\qquad
\qquad
\qquad
\Bminus{\B}ab \qq{:=}
\begin{tikzpicture}[x=2em,y=-2em,baseline={([yshift=-2ex]current bounding box.center)}]
\Bbox[0]{2}{1}{2}{$\B^0_a$}
\Bbox[1]{2}{1}{2}{$\B^b_c$}
\Bsigma[2]{2}{0}
\smnode at (.5,2.5){$\vdots$};
\smnode at (1.4,0){$\gamma$};
\smnode at (2,1){$\gamma(a)=\gamma(b)$};
\smnode at (1.4,2){$\gamma(c)$};
\end{tikzpicture}
\]
\caption{A partial braid $\B_\ell$ expanded as the product of several partial sub-braids along a sequence $0<a<b<\cdots<e<\ell$.  If it so happens that $\B^a_b$ is color-pure with respect to $\gamma(a)$, so that $\gamma(a)=\gamma(b)$, then we can also define the (still color-pure) semi-infinite braid word $\Bminus{\B}{a}{b}$ by deleting $\B^a_b$ from $\B$.}
\label{fig:partial sub-braids, colorings, deletions}
\end{figure}
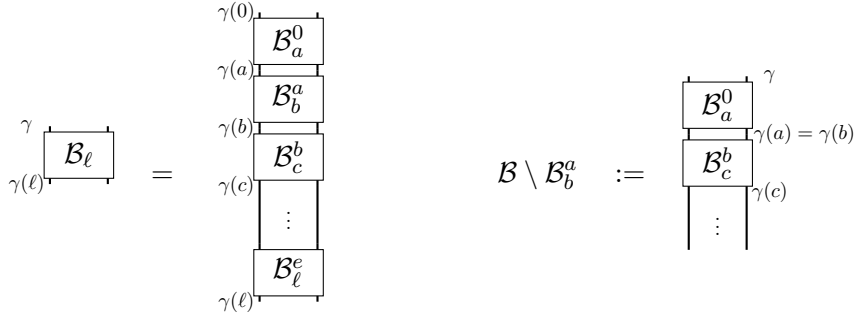

\begin{definition}\label{def:color complete}
A positive color-pure semi-infinite braid word $\B_{(\gamma)}$ is called \emph{color-complete} if there exists a (finite or infinite) sequence of natural numbers (occurring in pairs $a_i,b_i$)
\[a_1<b_1<a_2<b_2<\cdots\]
such that the following conditions hold.
\begin{itemize}
\item Each partial sub-braid $\B^{a_i}_{b_i}$ is color-pure with respect to $\gamma(a_i)$ (that is, $\gamma(a_i)=\gamma(b_i)$).
\item The semi-infinite braid word $\Bminusss{\B}{a_1}{b_1}{a_2}{b_2}$ is (braid move equivalent to) the infinite full twist $\FT^\infty$.
\end{itemize}
This definition is preserved under finitely many allowable braid moves, and so we can consider a positive color-pure semi-infinite braid $\BB_{(\gamma)}$ to be \emph{color-complete} if any positive word representing it is color-complete.
\end{definition}

\subsection{The main result for braid words}\label{sec:proving main thm}

With the language of the preceding sections in place, we can state and prove our main result on the level of braid words.

\begin{theorem}\label{thm:main thm precise}
Fix $n\in\N$, and let $\gamma=(\gamma_1,\dots,\gamma_n)$ be any coloring in $\{0,1,\dots,N\}^n$.  Let $\B_{(\gamma)}$ be any color-complete positive semi-infinite braid word with maximal purity sequence $m_1<m_2<\cdots$.  Then the corresponding inverse system $\{\KC{\B_{m_\ell}},g_\ell\}$ coming from Proposition \ref{prop:colorpure inv sys} is Cauchy, with limiting complex $\KC{\B_{(\gamma)}}$ satisfying
\[\KC{\B_{(\gamma)}} \simeq P_{n,(\gamma)}\]
where $P_{n,(\gamma)}$ is the complex for the infinite full twist defined in Theorem \ref{thm:infFT with max purity seq}.
\end{theorem}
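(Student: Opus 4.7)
The plan is to apply Proposition \ref{prop:compSys} to compare the inverse system $\{\KC{\B_{m_\ell}},g_\ell\}$ with the Cauchy inverse system $\{\KC{\infFT_{p_k}},f_k\}$ coming from Theorem \ref{thm:infFT with max purity seq}, where $\infFT = \FT_{(\gamma)}^\infty$. To do this, I would construct compatible comparison maps $F_\ell:\KC{\B_{m_\ell}}\to\KC{\infFT_{p_{z(\ell)}}}$ for a nondecreasing function $z(\ell)\to\infty$, and then show $|F_\ell|_\h\to\infty$. The Cauchyness of $\{\KC{\B_{m_\ell}},g_\ell\}$ will then follow from combining the Cauchyness of the target system with the high homological order of $F_\ell$, and the limit identification $\KC{\B_{(\gamma)}}\simeq P_{n,(\gamma)}$ is exactly the conclusion of Proposition \ref{prop:compSys}.

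To construct $F_\ell$, I would use the color-completeness data $a_1<b_1<a_2<b_2<\cdots$. After passing to a suitable subsequence of the maximal purity sequence if needed (so that each chosen $m_\ell$ aligns with a $b_i$), I define $z(\ell)$ to be the number of completed deletion intervals $[a_i,b_i]\subset[0,m_\ell]$ whose removal leaves a clean partial braid word of $\FT^\infty$. Applying Proposition \ref{prop:colorpure complex} to each color-pure sub-braid $\B^{a_i}_{b_i}$ expresses $\KC{\B^{a_i}_{b_i}}\simeq\cone(I\to\Xch_i)$; iteratively projecting to the identity factor on each such sub-braid gives a chain map from $\KC{\B_{m_\ell}}$ to the complex of the residual braid, which by color-completeness is braid-isotopic to $\FT^{z(\ell)}_{(\gamma)}$. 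Composition with that braid isotopy defines $F_\ell$. Compatibility $F_\ell\circ g_\ell \sim (\text{system maps})\circ F_{\ell+1}$ holds because both sides resolve exactly the same collection of color-pure sub-braids within $\B_{m_{\ell+1}}$ to identities.

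To bound $|F_\ell|_\h$, I would analyze $\cone(F_\ell)$ using the multicone machinery of Section \ref{sec:multicones}. Up to chain homotopy equivalence, $\cone(F_\ell)$ is a direct summand of a multicone $\Mch_\ell$ indexed by the non-identity choices when resolving the sub-braids $\B^{a_i}_{b_i}$; each term of $\Mch_\ell$ sits in homological degree $\geq 1$ and, by Proposition \ref{prop:colorpure complex}, takes the form of a web-braid diagram containing at least one ladder with intermediate coloring $\gamma_x$ satisfying $\cs{\gamma_x}<\cs{\gamma}$. By Proposition \ref{prop:gen multicone equiv} I may simplify each term individually via braid-like isotopies: use the centrality of the full twist (Remark 3.17 of \cite{MAMW}, illustrated in Figure \ref{fig:pull rung thru twist}) to pull the ladder rungs through each of the $z(\ell)$ full twists appearing in the residual braid structure. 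By Propositions \ref{prop:isotopyShifts} and \ref{prop:FT colorsize}, each passage through a full twist produces a homological shift of $2(\cs{\gamma}-\cs{\gamma_x})\geq 2$, so the total shift for each term of $\Mch_\ell$ is bounded below by $2z(\ell)$. Combining this with the bound $\h_C(\delta_x)\geq 0$ and Equation \eqref{eq:gen multicone hom deg} yields $|F_\ell|_\h\geq 2z(\ell)\to\infty$.

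Finally, Cauchyness of $\{g_\ell\}$ follows from a standard cone argument on the commuting square formed by $F_\ell$, $F_{\ell+1}$, $g_\ell$, and the relevant composition of $\FT$-system maps: the large homological orders of the vertical maps together with the Cauchyness of the $\FT$-system force $|g_\ell|_\h\to\infty$. Proposition \ref{prop:compSys} then identifies the limit with $P_{n,(\gamma)}$. The main obstacle I anticipate is not the shift computation itself but the bookkeeping needed to align the finite partial braid $\B_{m_\ell}$ with a clean truncation of the color-complete factorization into full twists plus deletable sub-braids, and the verification that the resulting quotient maps $F_\ell$ commute on the nose with both families of system maps. Once that alignment is set up properly, the shift estimate proceeds in direct analogy with the proof of Theorem \ref{thm:infFT with max purity seq}.
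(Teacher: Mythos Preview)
Your overall architecture is exactly that of the paper: use color-completeness to build quotient maps $F_\ell$ from $\KC{\B_{m_\ell}}$ to $\KC{\infFT_{p_{z(\ell)}}}$ and then invoke Proposition~\ref{prop:compSys}.  The construction of $F_\ell$ and the commutation with system maps are fine.  The substantive gap is in your estimate $|F_\ell|_\h\geq 2z(\ell)$.

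The problem is this.  A general term of your multicone $\Mch_\ell$ is a complex associated to a web-braid of the form
\[
\infFT^0_{q_1}\cdot \delta_1\cdot \infFT^{q_1}_{q_2}\cdot \delta_2\cdots \delta_r\cdot \infFT^{q_r}_{p_k},
\]
where at least one $\delta_j$ is a non-identity ladder but \emph{several} of the $\delta_j$ may be non-identity.  You propose to pull the rungs of one such $\delta_i$ through ``each of the $z(\ell)$ full twists,'' but those full twists live in the concatenation $\infFT^0_{p_k}$ only after the pieces $\infFT^{q_j}_{q_{j+1}}$ are merged; with the other $\delta_j$'s (or the remaining $\beta_j$'s) interposed, the partial full-twist words are fragmented and need not individually contain any complete $\FT$.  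Centrality lets you commute a \emph{complete} $\FT$ past a web, but it does not let you assemble a full twist from two partial pieces separated by a ladder, and individual crossings do not commute past webs.  So the shift you can actually justify is $2c_2(\delta)$, where $c_2(\delta)$ is the largest number of \emph{uninterrupted} full twists appearing in the diagram, not $2z(\ell)$.

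The paper repairs this by tracking a second quantity $c_1(\delta)$, the number of $\delta_j$ that are non-identity, and proving that every term satisfies $\h\geq c_1(\delta)+2c_2(\delta)$ (the $c_1$ contribution comes from Equation~\eqref{eq:hom deg of delta in beta}).  One then argues by pigeonhole that $c_1(\delta)+2c_2(\delta)\to\infty$: the growing supply of full twists is partitioned into at most $c_1(\delta)+2$ blocks, so either $c_1$ is large or some block (hence $c_2$) is large.  Your sketch needs exactly this extra bookkeeping; once you add it, the rest of your outline goes through.  (A minor secondary point: the paper verifies Cauchyness of $\{g_\ell\}$ by a direct analogue of the same shift argument rather than by the commuting-square reasoning you suggest, which does not obviously yield a lower bound on $|g_\ell|_\h$ from bounds on $|F_\ell|_\h$ alone.)
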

We will prove this theorem using Proposition \ref{prop:compSys}.  Having fixed $\gamma$, the infinite twist $\infFT:=\FT^\infty_{(\gamma)}$ has some maximal purity sequence $p_1<p_2<\cdots$ giving rise to a Cauchy inverse system $\{\KC{\infFT_{p_k}},f_k\}$ such that the limiting complex $\KC{\infFT}$ satisfies $\KC{\infFT}\simeq P_{n,(\gamma)}$ (Theorem \ref{thm:infFT with max purity seq}).  In order to use Proposition \ref{prop:compSys}, we must first construct `horizontal' maps $F_\ell:\KC{\B_{m_\ell}}\rightarrow\KC{\infFT_{p_k}}$ which commute with the system maps, and then we will need to show that $|F_\ell|_\h\rightarrow\infty$ as $\ell\rightarrow\infty$.  We split these tasks into subsections for organizational clarity below.

\subsubsection{Constructing the maps $F_\ell:\KC{\B_{m_\ell}}\rightarrow\KC{\infFT_{p_k}}$}\label{sec:constructing Fell}

The color-completeness of $\B$ implies that, if we fix some $\ell\in\N$, the partial braid word $\B_{m_{\ell}}$ contains some color-pure partial sub-braids $\B^{a_1}_{b_1},\B^{a_2}_{b_2},\dots \B^{a_r}_{b_r}$ such that 
\[\Bminuss{(\B_{m_\ell})}{a_1}{b_1}{a_r}{b_r} \cong \infFT_{p_k},\]
where $k=z(\ell)$ for some non-decreasing function $z:\N\rightarrow\N$ such that $z(\ell)\rightarrow \infty$ as $\ell\rightarrow\infty$ (this will be precisely the indexing function required by Proposition \ref{prop:compSys}).  The index $r$ depends on $\ell$ as well, but this dependence will be irrelevant moving forward.  In an attempt to mimic the proof of Proposition \ref{prop:colorpure complex}, we will use the notation
\[\beta_1:=\B^{a_1}_{b_1}, \beta_2:=\B^{a_2}_{b_2}, \dots \beta_r:=\B^{a_r}_{b_r}\]
which allows the following visual presentation of $\B_{m_\ell}$:
\[
\begin{tikzpicture}[x=2em,y=-2em,baseline={([yshift=-2ex]current bounding box.center)}]
\Bboxst[0]{$\B_{m_\ell}$}
\end{tikzpicture}
\qq{\cong}
\begin{tikzpicture}[x=2em,y=-2em,baseline={([yshift=-2ex]current bounding box.center)}]
\Bboxst[0]{$\infFT^0_{q_1}$}
\Bboxst[1]{$\beta_1$}
\Bboxst[2]{$\infFT^{q_1}_{q_2}$}
\Bboxst[3]{$\beta_2$}
\Bdots[4]{2}{1}
\Bboxst[5]{$\beta_r$}
\Bboxst[6]{$\infFT^{q_r}_{p_k}$}
\end{tikzpicture}
\]
where the indices $q_i$ depend on the exact word $\B$, but will not concern us.  The important point is that $\B_{m_\ell}$ can be presented in a piecewise fashion as indicated.

Now Proposition \ref{prop:colorpure complex} ensures that, for each $i=1,\dots,r$, we have
\[\KC{\beta_i}\simeq \cone(I\rightarrow\Xch_i)\]
where $\Xch_i$ is a direct summand of a complex $\Cch_i$ with specified properties.  We begin by applying this fact to each of the $\beta_i$ in $\B_{m_\ell}$ in an iterative fashion.  If we follow along the reasoning presented in the proof of Proposition \ref{prop:colorpure complex}, we arrive at the following iterated cone presentation for $\KC{\B_{m_\ell}}$ (we have omitted the word `cone' from the notation to avoid clutter):
\[
\begin{tikzpicture}[x=2em,y=-2em,baseline={([yshift=-2ex]current bounding box.center)}]
\Bboxst[0]{$\B_{m_\ell}$}
\end{tikzpicture}
\simeq
\left( 
\left( \cdots 
\left( 
\left( 
\begin{tikzpicture}[x=2.2em,y=-2em,baseline={([yshift=-2ex]current bounding box.center)}]
\Bboxst[0]{$\infFT^0_{p_k}$}
\end{tikzpicture}
\rightarrow 
\begin{tikzpicture}[x=2.2em,y=-2em,baseline={([yshift=-2ex]current bounding box.center)}]
\Bboxst[0]{$\infFT^0_{q_{r}}$}
\Bboxst[1]{$\Xch_r$}
\Bboxst[2]{$\infFT^{q_r}_{p_k}$}
\end{tikzpicture}
\right)
\rightarrow
\begin{tikzpicture}[x=2.2em,y=-2em,baseline={([yshift=-2ex]current bounding box.center)}]
\Bboxst[0]{$\infFT^0_{q_{r-1}}$}
\Bboxst[1]{$\Xch_{r-1}$}
\Bboxst[2]{$\infFT^{q_{r-1}}_{q_r}$}
\Bboxst[3]{$\beta_r$}
\Bboxst[4]{$\infFT^{q_r}_{p_k}$}
\end{tikzpicture}
\right)
\cdots \rightarrow
\begin{tikzpicture}[x=2.2em,y=-2em,baseline={([yshift=-2ex]current bounding box.center)}]
\Bboxst[0]{$\infFT^0_{q_2}$}
\Bboxst[1]{$\Xch_2$}
\Bboxst[2]{$\infFT^{q_2}_{q_3}$}
\Bboxst[3]{$\beta_3$}
\Bdots[4]{2}{1}
\Bboxst[5]{$\beta_r$}
\Bboxst[6]{$\infFT^{q_r}_{p_k}$}
\end{tikzpicture}
\right)
\rightarrow
\begin{tikzpicture}[x=2.2em,y=-2em,baseline={([yshift=-2ex]current bounding box.center)}]
\Bboxst[0]{$\infFT^0_{q_1}$}
\Bboxst[1]{$\Xch_1$}
\Bboxst[2]{$\infFT^{q_1}_{q_2}$}
\Bboxst[3]{$\beta_2$}
\Bdots[4]{2}{1}
\Bboxst[5]{$\beta_r$}
\Bboxst[6]{$\infFT^{q_r}_{p_k}$}
\end{tikzpicture}
\right).
\]

We again collapse this into a single cone
\begin{equation}\label{eq:basic KC(Bml) cone}
\KC{\B_{m_\ell}} \simeq \cone( \KC{\infFT_{p_k}} \rightarrow \td{\Xch} )
\end{equation}
where $\td{\Xch}:=\bigoplus_{i=1}^r \td{\Xch}_i$ is a direct summand of the complex $\td{\Cch}:=\bigoplus_{i=1}^r \td{\Cch}_i$ with the definitions of $\td{\Xch}_i$ and $\td{\Cch}_i$ illustrated below:
\[
\td{\Xch}_i:=
\begin{tikzpicture}[x=2.2em,y=-2em,baseline={([yshift=-2ex]current bounding box.center)}]
\Bboxst[0]{$\infFT^0_{q_i}$}
\Bboxst[1]{$\Xch_i$}
\Bboxst[2]{$\infFT^{q_i}_{q_{i+1}}$}
\Bboxst[3]{$\beta_{i+1}$}
\Bdots[4]{2}{1}
\Bboxst[5]{$\beta_r$}
\Bboxst[6]{$\infFT^{q_r}_{p_k}$}
\end{tikzpicture}
\qq{,}\qq{}
\td{\Cch}_i:=
\begin{tikzpicture}[x=2.2em,y=-2em,baseline={([yshift=-2ex]current bounding box.center)}]
\Bboxst[0]{$\infFT^0_{q_i}$}
\Bboxst[1]{$\Cch_i$}
\Bboxst[2]{$\infFT^{q_i}_{q_{i+1}}$}
\Bboxst[3]{$\beta_{i+1}$}
\Bdots[4]{2}{1}
\Bboxst[5]{$\beta_r$}
\Bboxst[6]{$\infFT^{q_r}_{p_k}$}
\end{tikzpicture}.
\]
This allows us to define $F_\ell:\KC{\B_{m_\ell}} \rightarrow \KC{\infFT_{p_k}}$ via the quotient map implied by Equation \eqref{eq:basic KC(Bml) cone}.  Since all of the system maps are built as similar quotient maps corresponding to deleting various partial sub-braids, it is clear that $F_\ell$ commutes with the system maps $f_k,g_\ell$ as required.

\subsubsection{Estimating $|F_\ell|_\h$ with a lower bound $b(\ell)$}
\label{sec:bounding Fell}

In order to estimate $|F_\ell|_\h$, we need to understand $\cone(F_\ell)\simeq\td{\Xch}$.  Mimicking the proof of Theorem \ref{thm:infFT with max purity seq}, we consider the complex $\td{\Cch}$ instead.  We examine a single summand $\td{\Cch}_i$ of $\td{\Cch}$ by expanding it as a large iterated multicone along each $\KCsimp{\beta_j}$ for $i<j\leq r$ (recall the notation $\KCsimp{\cdot}$ for the simplified cone complex of Proposition \ref{prop:colorpure complex}).  We illustrate the first two steps in Figure \ref{fig:iterating the mcone}.

\begin{figure}
\begin{align*}
\td{\Cch}_i &\simeq
\underset{\delta_{x},\delta_{y}\in\KCsimp{\beta_{i+1}}}{\Mcone}\left( 
\begin{tikzpicture}[x=2.2em,y=-2em,baseline={([yshift=-2ex]current bounding box.center)}]
\Bboxst[0]{$\infFT^0_{q_i}$}
\Bboxst[1]{$\Cch_i$}
\Bboxst[2]{$\infFT^{q_i}_{q_{i+1}}$}
\Bboxst[3]{$\delta_{x}$}
\Bdots[4]{2}{1}
\Bboxst[5]{$\infFT^{q_r}_{p_k}$}
\end{tikzpicture}
\qq{\rightarrow}
\begin{tikzpicture}[x=2.2em,y=-2em,baseline={([yshift=-2ex]current bounding box.center)}]
\Bboxst[0]{$\infFT^0_{q_i}$}
\Bboxst[1]{$\Cch_i$}
\Bboxst[2]{$\infFT^{q_i}_{q_{i+1}}$}
\Bboxst[3]{$\delta_{y}$}
\Bdots[4]{2}{1}
\Bboxst[5]{$\infFT^{q_r}_{p_k}$}
\end{tikzpicture}
\right)
\\
&\simeq
\underset{\delta_{x},\delta_{y}\in\KCsimp{\beta_{i+1}}}{\Mcone}\left( 
\underset{\delta'_{x},\delta'_{y}\in\KCsimp{\beta_{i+2}}}{\Mcone}\left( 
\begin{tikzpicture}[x=2.2em,y=-2em,baseline={([yshift=-2ex]current bounding box.center)}]
\Bboxst[0]{$\infFT^0_{q_i}$}
\Bboxst[1]{$\Cch_i$}
\Bboxst[2]{$\infFT^{q_i}_{q_{i+1}}$}
\Bboxst[3]{$\delta_{x}$}
\Bboxst[4]{$\infFT^{q_{i+1}}_{q_{i+2}}$}
\Bboxst[5]{$\delta'_{x}$}
\Bdots[6]{2}{1}
\Bboxst[7]{$\infFT^{q_r}_{p_k}$}
\end{tikzpicture}
{\rightarrow}
\begin{tikzpicture}[x=2.2em,y=-2em,baseline={([yshift=-2ex]current bounding box.center)}]
\Bboxst[0]{$\infFT^0_{q_i}$}
\Bboxst[1]{$\Cch_i$}
\Bboxst[2]{$\infFT^{q_i}_{q_{i+1}}$}
\Bboxst[3]{$\delta_{x}$}
\Bboxst[4]{$\infFT^{q_{i+1}}_{q_{i+2}}$}
\Bboxst[5]{$\delta'_{y}$}
\Bdots[6]{2}{1}
\Bboxst[7]{$\infFT^{q_r}_{p_k}$}
\end{tikzpicture}
\right)
{\rightarrow}
\underset{\delta'_{x},\delta'_{y}\in\KCsimp{\beta_{i+2}}}{\Mcone}\left( 
\begin{tikzpicture}[x=2.2em,y=-2em,baseline={([yshift=-2ex]current bounding box.center)}]
\Bboxst[0]{$\infFT^0_{q_i}$}
\Bboxst[1]{$\Cch_i$}
\Bboxst[2]{$\infFT^{q_i}_{q_{i+1}}$}
\Bboxst[3]{$\delta_{y}$}
\Bboxst[4]{$\infFT^{q_{i+1}}_{q_{i+2}}$}
\Bboxst[5]{$\delta'_{x}$}
\Bdots[6]{2}{1}
\Bboxst[7]{$\infFT^{q_r}_{p_k}$}
\end{tikzpicture}
{\rightarrow}
\begin{tikzpicture}[x=2.2em,y=-2em,baseline={([yshift=-2ex]current bounding box.center)}]
\Bboxst[0]{$\infFT^0_{q_i}$}
\Bboxst[1]{$\Cch_i$}
\Bboxst[2]{$\infFT^{q_i}_{q_{i+1}}$}
\Bboxst[3]{$\delta_{y}$}
\Bboxst[4]{$\infFT^{q_{i+1}}_{q_{i+2}}$}
\Bboxst[5]{$\delta'_{y}$}
\Bdots[6]{2}{1}
\Bboxst[7]{$\infFT^{q_r}_{p_k}$}
\end{tikzpicture}
\right)
\right)\\
&\simeq \cdots
\end{align*}
\caption{The first two steps expanding $\Cch_i$ as an iterated multicone.  The vertical ellipses here are meant to indicate that the remainder of the diagram below the lowest $\delta$ matches the diagram defining $\td{\Cch}_i$ below the corresponding point.  The next step would be to expand all four of the terms visualized in the bottom row along $\KCsimp{\beta_{i+3}}$, and we would continue in this fashion until there are no $\beta_j$ left.}
\label{fig:iterating the mcone}
\end{figure}

In the end we may collapse the resulting iterated multicone into one large multicone incorporating all of the possible complexes.  In order to write this cleanly, we introduce some notation.

\begin{definition}
We will write $\KCsimp{\beta}$ as a shorthand to indicate the formal tensor product $\bigotimes_{j=i+1}^r \KCsimp{\beta_j}$, with objects $\delta:=\delta_{i+1}\otimes\cdots\otimes\delta_r$.  To the formal object $\delta$ we associate a diagram also denoted $\delta$ formed by placing each $\delta_j$ in place of $\beta_j$ in the diagram for $\td{\Cch}_i$ as illustrated below
\[
\delta=\delta_{i+1}\otimes\cdots\otimes\delta_r \qq{\Longrightarrow}
\delta:=
\begin{tikzpicture}[x=2.2em,y=-2em,baseline={([yshift=-2ex]current bounding box.center)}]
\Bboxst[0]{$\infFT^0_{q_i}$}
\Bboxst[1]{$\Cch_i$}
\Bboxst[2]{$\infFT^{q_i}_{q_{i+1}}$}
\Bboxst[3]{$\delta_{i+1}$}
\Bboxst[4]{$\infFT^{q_{i+1}}_{q_{i+2}}$}
\Bdots[5]{2}{1}
\Bboxst[6]{$\delta_r$}
\Bboxst[7]{$\infFT^{q_r}_{p_k}$}
\end{tikzpicture}
\qq{,} \text{for diagrams $\delta_j\in\KCsimp{\beta_j}$ for $i<j\leq r$}.
\]
Maps between two formal objects $\delta_x,\delta_y\in\KCsimp{\beta}$ then induce maps between the diagram complexes $\KC{\delta_x},\KC{\delta_y}$ formed by stitching together cobordisms in the usual way.
\end{definition}

With this notation we can collapse our iterated multicone into a single multicone 
\begin{equation}\label{eq:Ci first mcone}
\td{\Cch}_i \simeq
\underset{\delta_x,\delta_y\in\KCsimp{\beta}}{\Mcone}\left(
\KC{\delta_x} \longrightarrow \KC{\delta_y} \right),
\end{equation}
for which we have the formula
\begin{equation}\label{eq:hom deg of delta in beta}
\h_{\KCsimp{\beta}}(\delta) = \sum_{j=i+1}^r \h_{\KCsimp{\beta_j}}(\delta_j).
\end{equation}

From here, we analyze the single complexes within this multicone \eqref{eq:Ci first mcone} in a manner very similar to the proof of Theorem 3.3 in \cite{MAMW}.  We begin by fixing such a complex $\KC{\delta}$, and we define two quantities based on $\delta$.  First, we have
\begin{equation}
c_1(\delta) := \big|\{j\in\{i+1,\dots,r\} \,|\, \delta_j\in\Xch_j\} \big|.
\end{equation}
Since any $\delta_j\in\Xch_j$ must have $\h_{\KCsimp{\beta_j}}(\delta_j)>0$, Equation \eqref{eq:hom deg of delta in beta} quickly produces the bound
\begin{equation}\label{eq:c1 bound}
\h_{\KCsimp{\beta}}(\delta) \geq c_1(\delta).
\end{equation}
Second, we define
\begin{equation}
c_2(\delta) := \text{largest number of \emph{uninterrupted} full twists in the diagram $\delta$},
\end{equation}
and we make the following claim.

\begin{lemma}\label{lem:KC simeq Y}
Any complex $\KC{\delta}$ of the form indicated above is chain homotopy equivalent to a complex $\Ych(\delta)$ having no terms in homological degree less than $2c_2(\delta)$.
\end{lemma}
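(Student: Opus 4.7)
The plan is to generalize the rung-pulling step of Theorem \ref{thm:infFT with max purity seq} to handle a run of full twists occurring inside $\delta$. By definition of $c_2(\delta)$, there is an uninterrupted stretch of $c_2(\delta)$ consecutive complete copies of $\FT$ in $\delta$. This stretch is necessarily adjacent to at least one interrupter $\Delta$, where $\Delta$ is either $\Cch_i$ or some $\delta_j \in \Xch_j$. After possibly expanding $\Cch_i$ as a multicone along its own diagrammatic terms (and re-assembling via Proposition \ref{prop:gen multicone equiv} at the end), we may assume $\Delta$ is a single ladder diagram, which by Proposition \ref{prop:colorpure complex} admits a splitting $\Delta = \Delta^{top} \cdot \Delta^{bot}$ at an intermediate coloring $\mu'$ of strictly smaller color size than the top-and-bottom coloring $\mu$ of $\Delta$.

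The argument proceeds via a two-step braid-like isotopy using centrality of $\FT$ in the braid group, lifted to web-braids via the rung-pulling of Figure \ref{fig:pull rung thru twist}. First, commute the $c_2(\delta)$ complete twists past any partial-$\FT$ crossings separating them from $\Delta$, so the complete-twist block becomes adjacent to $\Delta$; by permutation-invariance of $\cs{\cdot}$, this sub-isotopy has zero net shift. Second, pull the half of $\Delta$ facing the stretch (say $\Delta^{bot}$) through the $c_2(\delta)$ complete twists; this recolors those twists from $\mu$ to $\mu'$, and by Propositions \ref{prop:isotopyShifts} and \ref{prop:FT colorsize} contributes a net shift
\[ t \;=\; 2c_2(\delta)\bigl(\cs{\mu} - \cs{\mu'}\bigr) \;\geq\; 2c_2(\delta). \]

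I would then define $\Ych(\delta) := \h^t \q^s \KC{\delta'}$, where $\delta'$ is the resulting isotopic positive web-braid diagram and $s$ absorbs the attendant $q$-degree shift. Proposition \ref{prop:isotopyShifts} provides $\KC{\delta} \simeq \Ych(\delta)$, and positivity of $\delta'$ ensures $\KC{\delta'}$ is supported in non-negative homological degrees, so $\Ych(\delta)$ is supported in degrees $\geq t \geq 2c_2(\delta)$, as required.

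The principal subtlety is bookkeeping around the choice of interrupter $\Delta$ (in particular when the longest stretch sits at the very top or bottom of $\delta$, adjacent to only one interrupter -- potentially only to $\Cch_i$ -- requiring the multicone expansion noted above) and around partial full twists at the boundary of the stretch. None of this introduces phenomena beyond what already appears in the proof of Theorem \ref{thm:infFT with max purity seq}: only centrality of each complete $\FT$ factor is used, and all other diagrammatic pieces (partial twists, the other interrupters, and $\Delta^{top}$) retain their colorings throughout, so no unintended shifts enter the calculation.
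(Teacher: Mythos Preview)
Your argument is essentially correct and follows the same rung-pulling idea as the paper, but your choice of interrupter introduces a subtlety you do not fully address. The paper makes a cleaner choice that avoids it.

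Specifically: when you take the nearest interrupter $\Delta$ to be some $\delta_j\in\Xch_j$ and assert that it ``is a single ladder diagram'' admitting a splitting $\Delta^{top}\cdot\Delta^{bot}$ at an intermediate coloring of smaller color size, you are implicitly invoking the conclusion of Proposition~\ref{prop:colorpure complex} for the terms of $\Xch_j$. But that proposition only guarantees this for the terms of $\Cch_j$; the complex $\Xch_j$ is merely a direct summand of $\Cch_j$, possibly in the Karoubi envelope (see the proof of Lemma~\ref{lem:Matts lemma}). So a term $\delta_j\in\Xch_j$ need only be a summand of a ladder diagram, and the ``splitting'' you perform on it is not immediately justified as a diagrammatic operation.

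The paper sidesteps this entirely by \emph{always} expanding $\KC{\delta}$ as a multicone along $\Cch_i$ and splitting only at a term $\delta_{i,x}\in\Cch_i$, which is a genuine ladder diagram by construction. Since $\Cch_i$ is always present in $\delta$ and full twists are central, the block $\FT^{c_2(\delta)}$ can be commuted past \emph{all} intervening pieces $\delta_{i,x}^{bot}\cdot\alpha$ (including any $\delta_j$'s) in one stroke, with exactly the same shift computation $t_x=2c_2(\delta)(\cs{\hat\gamma}-\cs{\gamma_x})\geq 2c_2(\delta)$ you wrote down. There is no advantage to stopping at the nearest interrupter: commuting all the way to $\Cch_i$ costs nothing extra, gives a uniform argument with no case split, and keeps the splitting at an honest ladder diagram. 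Your bookkeeping worries about partial twists and endpoint cases then evaporate as well.
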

\begin{proof}
Indeed if we expand $\KC{\delta}$ as its own multicone along the complex $\Cch_i$, we see (denoting the terms in $\Cch_i$ by $\delta_i$ parallel to the notation for $\delta_j\in\KCsimp{\beta_j}$)
\[
\KC{\delta} \simeq
\underset{\delta_{i,x},\delta_{i,y}\in\Cch_i}{\Mcone}\left( 
\begin{tikzpicture}[x=2.2em,y=-2em,baseline={([yshift=-2ex]current bounding box.center)}]
\Bboxst[0]{$\infFT^0_{q_i}$}
\Bboxst[1]{$\delta_{i,x}$}
\Bboxst[2]{$\infFT^{q_i}_{q_{i+1}}$}
\Bboxst[3]{$\delta_{i+1}$}
\Bdots[4]{2}{1}
\Bboxst[5]{$\infFT^{q_r}_{p_k}$}
\end{tikzpicture}
\qq{\rightarrow}
\begin{tikzpicture}[x=2.2em,y=-2em,baseline={([yshift=-2ex]current bounding box.center)}]
\Bboxst[0]{$\infFT^0_{q_i}$}
\Bboxst[1]{$\delta_{i,y}$}
\Bboxst[2]{$\infFT^{q_i}_{q_{i+1}}$}
\Bboxst[3]{$\delta_{i+1}$}
\Bdots[4]{2}{1}
\Bboxst[5]{$\infFT^{q_r}_{p_k}$}
\end{tikzpicture}
\right).
\]
Recall from Proposition \ref{prop:colorpure complex} that any $\delta_{i,x}\in\Cch_i$ is a ladder diagram containing an intermediate coloring $\gamma_x$ with $\cs{\gamma_x}<\cs{\gamma(q_i)}$, so that we may split $\delta_{i,x}$ just as in the proof of Theorem \ref{thm:infFT with max purity seq}.  We also have $c_2(\delta)$ full twists available somewhere in the diagram (independently of the choice of $\delta_{i,x}$) which commute with any and all web-braids (recall Figure \ref{fig:pull rung thru twist}), and so we can perform the following moves on any single complex within the multicone:
\begin{equation}\label{eq:each diagram in delta multicone}
\KC{
\begin{tikzpicture}[x=2.2em,y=-2em,baseline={([yshift=-2ex]current bounding box.center)}]
\Bboxst[0]{$\infFT^0_{q_i}$}
\Bboxst[1]{$\delta_{i,x}^{top}$}
\Bboxst[2]{$\delta_{i,x}^{bot}$}
\Bboxst[3]{$\alpha$}
\Bboxst[4]{$\FT^{c_2(\delta)}$}
\Bboxst[5]{$\alpha'$}
\smnode at (-.5,0){$\gamma$};
\smnode at (-.5,1){$\gamma(q_i)$};
\smnode at (-.5,2){$\gamma_x$};
\smnode at (-.5,3){$\gamma(q_i)$};
\smnode at (-.5,4){$\hat{\gamma}$};
\smnode at (-.5,5){$\hat{\gamma}$};
\smnode at (-.5,6){$\gamma$};
\end{tikzpicture}
}
\qq{\simeq}
\h^{t_x}\KC{
\begin{tikzpicture}[x=2.2em,y=-2em,baseline={([yshift=-2ex]current bounding box.center)}]
\Bboxst[0]{$\infFT^0_{q_i}$}
\Bboxst[1]{$\delta_{i,x}^{top}$}
\Bboxst[2]{$\FT^{c_2(\delta)}$}
\Bboxst[3]{$\delta_{i,x}^{bot}$}
\Bboxst[4]{$\alpha$}
\Bboxst[5]{$\alpha'$}
\smnode at (-.5,0){$\gamma$};
\smnode at (-.5,1){$\gamma(q_i)$};
\smnode at (-.5,2){$\gamma_x$};
\smnode at (-.5,3){$\gamma_x$};
\smnode at (-.5,4){$\gamma(q_i)$};
\smnode at (-.5,5){$\hat{\gamma}$};
\smnode at (-.5,6){$\gamma$};
\end{tikzpicture}
}.
\end{equation}
In these diagrams, $\alpha$ and $\alpha'$ are web-braid diagrams consisting of various partial sub-braids $\infFT^a_b$ together with various $\delta_j$ depending on the precise format of our original diagram $\delta$.  The various intermediate colorings are also indicated.  Since all of the $\delta_j$ came from color-pure diagrams $\beta_j$, and sub-braids can permute colors but not change them, we have some intermediate coloring $\hat{\gamma}$ between $\alpha$ and $\alpha'$ that is a permutation of $\gamma$.  In particular, $\cs{\hat{\gamma}}=\cs{\gamma}>\cs{\gamma_x}$.  Now when we commute the full twists past the other parts of the diagram, we leave all colors on all crossings unchanged except for the crossings of the full twists, and so we have an overall homological shift of $t_x\geq 2c_2(\delta)$ again just as in the proof of Theorem \ref{thm:infFT with max purity seq}.

From here, Proposition \ref{prop:gen multicone equiv} ensures that our multicone for $\KC{\delta}$ is equivalent to another multicone $\Ych(\delta)$ made up entirely of complexes of the form on the right-hand side of Equation \eqref{eq:each diagram in delta multicone}.  Using the formula \eqref{eq:gen multicone hom deg} for homological grading of terms in a multicone, together with the fact that $\h_{\Cch_i}(\delta_{i})\geq 0$ for any $\delta_i\in\Cch_i$ (Proposition \ref{prop:colorpure complex}), we can conclude that $\Ych(\delta)$ indeed has no terms in homological degree below $2c_2(\delta)$.
\end{proof}

We now apply Proposition \ref{prop:gen multicone equiv} to the multicone \eqref{eq:Ci first mcone} to replace the complexes $\KC{\delta}$ with the corresponding complexes $\Ych(\delta)$ from Lemma \ref{lem:KC simeq Y}:
\[\td{\Cch}_i \simeq 
\underset{\delta_x,\delta_y \in \KCsimp{\beta}}{\Mcone}\left( \Ych(\delta_x) \longrightarrow \Ych(\delta_y)\right).\]

Any non-zero term in our multicone for $\td{\Cch}_i$ must be a diagram of the form $\epsilon_\delta$ coming from some $\Ych(\delta)$ in the multicone.  The homological grading of $\epsilon_\delta$ is computed via Equation \eqref{eq:gen multicone hom deg}:
\[\h_{\td{\Cch}_i}(\epsilon_\delta) = \h_{\KCsimp{\beta}}(\delta) +\h_{\Ych(\delta)}(\epsilon_\delta).\]
We then invoke the bound of Equation \eqref{eq:c1 bound} together with our homological condition on the complex $\Ych(\delta)$ (Lemma \ref{lem:KC simeq Y}) to conclude that
\[\h_{\td{\Cch_i}}(\epsilon_\delta) \geq c_1(\delta)+2c_2(\delta)\]
and in turn that the complex $\td{\Cch}=\bigoplus_{i=1}^r \Cch_i$ is supported in homological gradings greater than or equal to the bound $b(\ell)$ defined as
\begin{equation}\label{eq:coneF global bound}
b(\ell):=\min_{i\in\{1,\dots,r\}} \left( \min_{\delta\in\KCsimp{\beta} \text{ in $\Cch_i$}} \left( c_1(\delta)+2c_2(\delta) \right) \right).
\end{equation}
Since $\Xch\simeq\cone(F_\ell)$ is a direct summand of $\Cch$, we can conclude the same about $\cone(F_\ell)$ giving us $|F_\ell|_\h\geq b(\ell)$.




\subsubsection{Finishing the argument}
\begin{proof}[Proof of Theorem \ref{thm:main thm precise}]

We have two inverse systems $\{\KC{\infFT_{p_k}},f_k\}$ and $\{\KC{\B_{m_\ell}},g_\ell\}$ together with maps $F_\ell$ between them creating the commuting diagram of Figure \ref{fig:compare inv sys} (Section \ref{sec:constructing Fell}).  We have also produced a lower bound $b(\ell)$ \eqref{eq:coneF global bound} on the homological orders $|F_\ell|_\h$ (Section \ref{sec:bounding Fell}).

Just as in the proof of Theorem 3.3 in \cite{MAMW}, the reader can quickly verify that the color-completeness of $\B$ ensures that this bound must grow infinite as $\ell\rightarrow\infty$.  Roughly speaking, as $\ell$ grows, so too does $k$ (this is color-completeness), and thus we have an ever-growing number of full twists `available' in any diagram $\delta$ involved in any $\td{\Cch_i}$.  If the full twists are largely uninterrupted, $c_2$ must be large by definition; otherwise we have many non-identity $\delta_j$ diagrams `in the way', and so $c_1$ must be large.  In either case, their sum which defines $b(\ell)$ is growing without bound as $\ell$ grows.

A similar (and simpler) argument shows that the system $\{\KC{\B_{m_\ell}},g_\ell\}$ is Cauchy, and so by Proposition \ref{prop:compSys} we are done.
\end{proof}

\begin{corollary}\label{cor:main result pos braids}
If $\BB_{(\gamma)}$ is a positive color-complete semi-infinite braid, then there is a well-defined limiting system $\KC{\BB_{(\gamma)}}\simeq P_{n,(\gamma)}$ up to chain homotopy equivalence built from the inverse system arising from any positive word representing $\BB_{(\gamma)}$.
\end{corollary}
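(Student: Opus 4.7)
The plan is to reduce the statement to Theorem \ref{thm:main thm precise} and Proposition \ref{prop:colorpure cx well defined}, both of which have already done the heavy lifting. First I would fix any positive semi-infinite word $\B$ representing $\BB_{(\gamma)}$. Since $\BB_{(\gamma)}$ is color-complete by hypothesis and color-completeness is preserved by finitely many braid moves (Definition \ref{def:color complete}), $\B$ itself is color-complete. Let $m_1 < m_2 < \cdots$ denote its maximal purity sequence. Theorem \ref{thm:main thm precise} then applies and yields that the inverse system $\{\KC{\B_{m_\ell}}, g_\ell\}$ produced by Proposition \ref{prop:colorpure inv sys} is Cauchy, with limiting complex $\KC{\B_\infty} \simeq P_{n,(\gamma)}$.

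Next, to verify that this limit depends only on the braid $\BB_{(\gamma)}$ and not on the choice of representative word, I would take a second positive word $\B'$ representing $\BB_{(\gamma)}$. Since $\B$ and $\B'$ differ by finitely many braid moves, the same argument as above shows that $\B'$ is color-complete, so Theorem \ref{thm:main thm precise} again provides a Cauchy inverse system $\left\{\KC{\B'_{m'_k}}, f'_k\right\}$ with some limit. Now Proposition \ref{prop:colorpure cx well defined} produces horizontal comparison maps $F_\ell$ between the two systems with $|F_\ell|_\h \to \infty$, and the ``in particular'' clause of that proposition guarantees that the two limits are chain homotopy equivalent.

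Combining these two observations, we may define $\KC{\BB_{(\gamma)}}$ (up to chain homotopy equivalence) as the limiting complex coming from any positive word representing $\BB_{(\gamma)}$, and this complex is chain homotopy equivalent to $P_{n,(\gamma)}$. There is no real obstacle here beyond unpacking the definitions: the whole point of Proposition \ref{prop:colorpure cx well defined} was to make precisely this passage from words to braids automatic, and the color-completeness hypothesis on $\BB_{(\gamma)}$ is exactly what makes Theorem \ref{thm:main thm precise} available for every positive word representing it.
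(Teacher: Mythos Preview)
Your proposal is correct and follows exactly the paper's approach: the paper's proof is the single line ``Combine Theorem \ref{thm:main thm precise} and Proposition \ref{prop:colorpure cx well defined},'' and you have simply unpacked that combination in detail.
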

\begin{proof}
Combine Theorem \ref{thm:main thm precise} and Proposition \ref{prop:colorpure cx well defined}.
\end{proof}

\section{Further corollaries and general results}\label{sec:corollaries}
In this section we record some corollaries of our work that explore a variety of situations.  We begin with a quick corollary that shows how our limiting complexes behave similarly to categorified highest weight projectors.

\begin{corollary}\label{cor:P absorbs colorpure}
If $\beta$ is a positive braid color-pure with respect to some coloring $\gamma$, then
\[\KC{\beta}\otimes P_{n,(\gamma)} \simeq P_{n,(\gamma)} \otimes \KC{\beta} \simeq P_{n,(\gamma)}.\]
\end{corollary}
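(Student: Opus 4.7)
The plan is to realize each of the two tensor products as the limiting complex of an appropriate inverse system of finite braid complexes, and then identify that limit with $P_{n,(\gamma)}$ using results already established.

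For the first equivalence $\KC{\beta}\otimes P_{n,(\gamma)}\simeq P_{n,(\gamma)}$, I would consider the positive semi-infinite braid word
\[\B := \beta \cdot \FT^\infty\]
obtained by placing a representative of $\beta$ on top of the infinite full twist, and let $L$ denote the length of $\beta$. Since both $\beta$ and every power of $\FT$ are color-pure with respect to $\gamma$, the word $\B$ is color-pure with respect to $\gamma$; moreover the single deletion pair $(a_1,b_1)=(0,L)$ witnesses color-completeness in the sense of Definition~\ref{def:color complete}, since $\B^{0}_{L}=\beta$ is color-pure with respect to $\gamma=\gamma(0)$ and $\Bminus{\B}{0}{L}=\FT^\infty$. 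Theorem~\ref{thm:main thm precise} then yields $\KC{\B_{(\gamma)}}\simeq P_{n,(\gamma)}$. Writing $p_1<p_2<\cdots$ for the maximal purity sequence of $\FT^\infty_{(\gamma)}$, the sequence $\{L+p_k\}$ is a subsequence of the maximal purity sequence of $\B$ along which the partial complexes are $\KC{\beta\cdot\FT^{p_k}}\cong\KC{\beta}\otimes\KC{\FT^{p_k}_{(\gamma)}}$ with system maps of the form $I_{\KC{\beta}}\otimes f_k$. Proposition~\ref{prop:colorpure inv sys} identifies the limit along this subsequence with $\KC{\B_{(\gamma)}}$, and Lemma~\ref{lem:Lim of product} identifies it with $\KC{\beta}\otimes P_{n,(\gamma)}$. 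Combining these gives the first equivalence.

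For the second equivalence $P_{n,(\gamma)}\otimes\KC{\beta}\simeq P_{n,(\gamma)}$, the paper's convention places the finite portion of a semi-infinite word at the top, so this case does not follow directly from Theorem~\ref{thm:main thm precise}. Instead I would apply Proposition~\ref{prop:compSys} to the two inverse systems $\{\KC{\FT^{p_k}_{(\gamma)}}\otimes\KC{\beta}\}_k$ (with limit $P_{n,(\gamma)}\otimes\KC{\beta}$ by the right-hand analog of Lemma~\ref{lem:Lim of product}) and $\{\KC{\FT^{p_k}_{(\gamma)}}\}_k$ (with limit $P_{n,(\gamma)}$ by Theorem~\ref{thm:infFT with max purity seq}). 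Proposition~\ref{prop:colorpure complex} applied to $\beta$ gives $\KC{\beta}\simeq\cone(I_{(\gamma)}\to\Xch)$, and tensoring the resulting projection $\KC{\beta}\to I_{(\gamma)}$ with the identity on $\KC{\FT^{p_k}_{(\gamma)}}$ produces quotient maps
\[G_k:\KC{\FT^{p_k}_{(\gamma)}}\otimes\KC{\beta} \longrightarrow \KC{\FT^{p_k}_{(\gamma)}}\]
that commute with the respective system maps, since both are quotient maps acting independently on the two tensor factors. Because $\cone(G_k)\simeq\KC{\FT^{p_k}_{(\gamma)}}\otimes\Xch$ is a direct summand of $\KC{\FT^{p_k}_{(\gamma)}}\otimes\Cch$, the bound $|G_k|_\h\to\infty$ follows from the same rung-pulling estimate used on $f_k$ in the proof of Theorem~\ref{thm:infFT with max purity seq}: expand as a multicone along $\Cch$, commute each ladder diagram $\delta_x\in\Cch$ through the growing number of complete full twists inside $\KC{\FT^{p_k}_{(\gamma)}}$ (Figure~\ref{fig:pull rung thru twist}), and apply Propositions~\ref{prop:isotopyShifts} and~\ref{prop:FT colorsize} to pick up a homological shift of size at least twice the number of such twists. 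Proposition~\ref{prop:compSys} then delivers the desired chain homotopy equivalence. The only real difficulty is cosmetic rather than technical: the asymmetry of the paper's framework forces the second equivalence to proceed by adapting a piece of the argument underlying Theorem~\ref{thm:infFT with max purity seq} rather than by directly invoking the main theorem, but no new technical input is needed.
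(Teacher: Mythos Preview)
Your argument is correct. For the equivalence $\KC{\beta}\otimes P_{n,(\gamma)}\simeq P_{n,(\gamma)}$ you do exactly what the paper does: view $\beta\cdot\FT^\infty$ as a color-complete semi-infinite word, invoke Theorem~\ref{thm:main thm precise}, and identify the limit via Lemma~\ref{lem:Lim of product}. Your extra care with the subsequence $\{L+p_k\}$ and Proposition~\ref{prop:colorpure inv sys} is fine but not strictly necessary.

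For $P_{n,(\gamma)}\otimes\KC{\beta}\simeq P_{n,(\gamma)}$ the paper's one-line proof leaves this case essentially implicit, so there is nothing to match against. Your route via Proposition~\ref{prop:compSys}, building the quotient maps $G_k$ from Proposition~\ref{prop:colorpure complex} and bounding $|G_k|_\h$ by rerunning the rung-pulling estimate of Theorem~\ref{thm:infFT with max purity seq}, is valid and self-contained. A shorter alternative, perhaps closer to the paper's spirit, is to use centrality of the full twist: the braid isotopies $\FT^k\cdot\beta\cong\beta\cdot\FT^k$ induce chain homotopy equivalences that intertwine the system maps $f_k\otimes I_{\KC{\beta}}$ and $I_{\KC{\beta}}\otimes f_k$, so the two inverse systems $\{\KC{\FT^k}\otimes\KC{\beta}\}$ and $\{\KC{\beta}\otimes\KC{\FT^k}\}$ have homotopy-equivalent limits, reducing the second case to the first. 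Either way works; your version has the virtue of not needing to check compatibility of the centrality isotopies with the system maps.
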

\begin{proof}
Since $\beta$ is positive and color-pure with respect to $\gamma$, we can view the concatenation $\beta\cdot\FT^\infty$ as a single positive semi-infinite color-complete braid word with limiting complex $P_{n,(\gamma)}$ by Theorem \ref{thm:main thm precise}.  The statement of the corollary then follows from Lemma \ref{lem:Lim of product}.
\end{proof}

\begin{corollary}\label{cor:indep of starting point}
If $\B_{(\gamma)}$ is a positive semi-infinite color-complete braid word with maximal purity sequence $m_1<m_2<\cdots$, then $\KC{\B}\simeq\KC{\B_{m_i}}\otimes\KC{\B^{m_i}_\infty}\simeq\KC{\B^{m_i}_\infty}$ for any $i$.  That is to say, the complex for $\B_{(\gamma)}$ is independent of the `starting point' for the braid word, provided that this starting point is colored by $\gamma$.
\end{corollary}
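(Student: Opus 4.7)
The plan is to reduce both equivalences to the identification $\KC{\B^{m_i}_\infty}\simeq P_{n,(\gamma)}$. First I will address the factorization: for $\ell>i$ we have $\B_{m_\ell}=\B_{m_i}\cdot\B^{m_i}_{m_\ell}$, which factorizes the corresponding chain complex as a tensor product, and the system maps $g_\ell$ from Proposition \ref{prop:colorpure inv sys} take the form $\mathrm{id}_{\KC{\B_{m_i}}}\otimes h_\ell$, where $h_\ell$ is the quotient map associated to the color-pure sub-braid $\B^{m_\ell}_{m_{\ell+1}}$ via Proposition \ref{prop:colorpure complex}. Once the sub-system $\{\KC{\B^{m_i}_{m_\ell}},h_\ell\}_{\ell>i}$ is shown to be Cauchy, Lemma \ref{lem:Lim of product} immediately yields $\KC{\B}\simeq\KC{\B_{m_i}}\otimes\KC{\B^{m_i}_\infty}$, establishing the first equivalence.

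The substantive work is then to show this sub-system is Cauchy with limit $\simeq P_{n,(\gamma)}$. My plan is to rerun the proof of Theorem \ref{thm:main thm precise} with $\B^{m_i}_\infty$ in place of $\B$: comparison maps $F_\ell:\KC{\B^{m_i}_{m_\ell}}\to\KC{\infFT_{p_{k'(\ell)}}}$ would be constructed as in Section \ref{sec:constructing Fell} using those pairs $(a_j,b_j)$ from the color-completeness witness for $\B$ that satisfy $a_j\geq m_i$ (after shifting indices by $m_i$), and the lower bound on $|F_\ell|_\h$ from Section \ref{sec:bounding Fell} would be recovered by the same $c_1+2c_2$ count, which grows without bound in $\ell$ because color-completeness of $\B$ supplies arbitrarily many color-pure sub-braids and uninterrupted full twists past position $m_i$. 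Proposition \ref{prop:compSys} then concludes $\KC{\B^{m_i}_\infty}\simeq P_{n,(\gamma)}$, and Corollary \ref{cor:P absorbs colorpure} applied to the positive color-pure braid $\B_{m_i}$ gives $\KC{\B_{m_i}}\otimes\KC{\B^{m_i}_\infty}\simeq\KC{\B_{m_i}}\otimes P_{n,(\gamma)}\simeq P_{n,(\gamma)}\simeq\KC{\B^{m_i}_\infty}$, establishing the second equivalence.

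The main obstacle is a potential ``straddling'' of $m_i$ by some color-completeness pair $(a_J,b_J)$ with $a_J<m_i<b_J$: in that case the leading crossings $\B^{m_i}_{b_J}$ of $\B^{m_i}_\infty$ form a non-color-pure tail of $\B^{a_J}_{b_J}$ (its endpoints being the distinct colorings $\gamma$ and $\gamma(a_J)$), so $\B^{m_i}_\infty$ will not strictly satisfy Definition \ref{def:color complete}. I would handle this by treating $\B^{m_i}_{b_J}$ as a fixed finite prefix that is carried along in both sides of the comparison: the homological-order estimate in Section \ref{sec:bounding Fell} is asymptotic, so a bounded prefix shifts the bound $b(\ell)$ only by a constant and the conclusion $|F_\ell|_\h\to\infty$ survives. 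All other aspects of the argument, including the rung-pulling isotopies of Figure \ref{fig:pull rung thru twist} and the color-size bounds of Proposition \ref{prop:isotopyShifts}, go through verbatim.
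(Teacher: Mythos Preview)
Your approach is correct and essentially matches the paper's intended argument (the paper offers no explicit proof, placing the corollary immediately after Corollary~\ref{cor:P absorbs colorpure} as an evident consequence). The factorization via Lemma~\ref{lem:Lim of product}, the identification $\KC{\B^{m_i}_\infty}\simeq P_{n,(\gamma)}$, and the absorption via Corollary~\ref{cor:P absorbs colorpure} are exactly the ingredients the paper has in mind.

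Your handling of the straddling pair $(a_J,b_J)$ is more laborious than necessary, however. Rather than rerunning the proof of Theorem~\ref{thm:main thm precise} and carrying a non-color-pure prefix through the estimates, you can simply observe that $\B^{m_i}_\infty$ is \emph{itself} color-complete in the sense of Definition~\ref{def:color complete}, with a different witness. Choose any position $c>m_i$ in $\B$ lying at a full-twist boundary of the deleted word (so $\gamma(c)=\gamma$) and beyond all finitely many braid moves used in the original witness; then $\B^{m_i}_c$ is color-pure with respect to $\gamma$, and deleting it together with the shifted pairs $(a_j-m_i,b_j-m_i)$ for $a_j\geq c$ leaves exactly $\FT^\infty$. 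Thus Theorem~\ref{thm:main thm precise} applies directly to $\B^{m_i}_\infty$ without modification, and there is no need to revisit the $c_1+2c_2$ bound.
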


\subsection{Negative crossings}\label{sec:neg crossings}
In order to deal with semi-infinite braid words involving negative crossings, we require the following proposition that can be seen as a weak generalization of Proposition \ref{prop:colorpure complex}.

\begin{proposition}\label{prop:colorpure posneg complex}
Given a color-pure braid $\beta_{(\gamma)}$ on $n$ strands, let
\begin{equation}\label{eq:neg shift}
t^- := \sum_{\tau^-\in T_1} \min(\tau^-)
\end{equation}
denote the sum of the minimum colors at each \emph{negative} crossing $\tau^-$ in $\beta$.  Then the complex $\Ach:=\KC{\beta_{(\gamma)}}$ is chain homotopy equivalent to a multicone satisfying the following properties.
\begin{itemize}
\item There is a single term corresponding to the identity diagram $I$, and $\h_\Ach(I)=t^-$.
\item Every other term $\delta_x\in\Ach$ contains some intermediate coloring $\gamma_x$ with $\cs{\gamma_x}<\cs{\gamma}$.
\end{itemize}
\end{proposition}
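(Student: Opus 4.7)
The plan is to induct on the total number of crossings in $\beta$, generalizing the argument of Proposition \ref{prop:colorpure complex} to accommodate negative crossings.  The base case (the identity braid) is immediate: $t^-=0$ and $\KC{\beta}=I$ sits in homological degree $0$.

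For the inductive step, first suppose $\beta$ contains a uni-colored crossing.  If it is positive, say $\tau_{(i)}$, we apply Lemma \ref{lem:1-color crossing complex} to obtain $\KC{\tau_{(i)}}=\cone(I_{(i)}\to\Xch)$ and expand $\KC{\beta}\simeq\cone(\KC{\beta^\natural}\to\KC{\beta^\natural}\otimes\Xch)$ exactly as in the proof of Proposition \ref{prop:colorpure complex}, where $\beta^\natural$ denotes $\beta$ with that crossing deleted; since $t^-_{\beta^\natural}=t^-_{\beta}$, the induction hypothesis on $\beta^\natural$ combines with this cone to give the desired multicone with identity at homological degree $t^-$.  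If instead the uni-colored crossing is negative, $\tau^-_{(i)}$, then Equation \eqref{eq:negxDef} with $j=i$ shows that $\KC{\tau^-_{(i)}}$ is a complex supported in homological degrees $0$ through $i$ whose unique rung-$0$ term is the identity $I_{(i)}$ sitting in the top degree $i$, while every other term is a ladder with positive rung thickness $k$ and thus has intermediate coloring $(i-k,i+k)$ of smaller color size than $\cs{(i,i)}=i$.  Tensoring with $\KC{\beta^\natural}$ (now with $t^-_{\beta^\natural}=t^-_{\beta}-i$) and applying induction yields a multicone for $\KC{\beta}$ whose identity sits at homological degree $t^-_{\beta^\natural}+i=t^-$, as required.

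If $\beta$ has no uni-colored crossings, then the argument of Lemma \ref{lem:clasps in colorpure} (which does not actually use positivity) still forces $\beta$ to be pure.  Writing $\beta$ as a word in the pure-braid-group generators $A_{ij}^{\pm 1}$ and applying braid moves, either $\beta$ can be reduced to fewer crossings via Reidemeister II cancellations --- each of which shifts homological degrees by the minimum of the two colors involved, exactly matching the corresponding change in $t^-$ --- or $\beta$ contains either a positive or a negative clasp.  The positive-clasp subcase is handled directly by Lemma \ref{lem:2-color clasp complex}.  The main obstacle is the negative-clasp subcase, for which we will need to prove a direct analogue of Lemma \ref{lem:2-color clasp complex}: that $\KC{(\tau^-)^2_{(ij)}}$ is chain homotopy equivalent to a multicone containing $I_{(ij)}$ in homological degree $2\min(i,j)$, with every other term in strictly lower degree and containing an intermediate coloring of smaller color size.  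The proof should mirror the positive-clasp argument but from the \emph{top} of the complex rather than the bottom: expand $\KC{(\tau^-)^2_{(ij)}}$ along both negative crossings, identify the unique top-homological-degree term as the same trapezoid with rung $|j-i|$ (arising from Equation \eqref{eq:negxDef}), apply the web relation of Figure \ref{fig:webRel} to decompose this trapezoid into $I_{(ij)}$ plus trapezoids with rung $p>0$, and then invoke the duality of Equation \eqref{eq:chain cx duality} together with the isotopy shifts of Proposition \ref{prop:isotopyShifts} to show the inclusion maps from these unwanted trapezoids into the rest of the complex are null-homotopic, permitting a dual version of the Gaussian elimination of Lemma \ref{lem:Matts lemma} to remove them.
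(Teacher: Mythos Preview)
Your approach closely parallels the paper's, and the handling of uni-colored crossings of either sign is essentially identical. The chief difference lies in the clasp cases, where you do more work than necessary.

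The paper's proof explicitly avoids Gaussian elimination here, observing that Proposition~\ref{prop:colorpure posneg complex} asks only for a \emph{multicone} with a single identity term and all other terms of smaller color size---not a cone $I\to\Xch$ as in Proposition~\ref{prop:colorpure complex}. For a clasp of either sign, the extreme homological-degree term is the trapezoid with rung $|j-i|$; applying the web relation of Figure~\ref{fig:webRel} decomposes it as $I_{(ij)}$ plus trapezoids $\epsilon_p$ with $p>0$. But those $\epsilon_p$ already carry intermediate colorings of strictly smaller color size, so they may simply remain in the multicone. Your proposed negative-clasp analogue of Lemma~\ref{lem:2-color clasp complex} (with its dualized version of Lemma~\ref{lem:Matts lemma}) is therefore unnecessary, and indeed you need not invoke Lemma~\ref{lem:2-color clasp complex} for the positive-clasp case either. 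This is what the paper means when it says ``this time we do not bother with Gaussian eliminations.''

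One small caution on your inductive framework: rewriting $\beta$ as a word in the pure-braid generators $A_{ij}^{\pm1}$ and then expanding in $\sigma$'s can increase the total crossing count, so an induction on that count is not obviously well-founded along that route. The paper instead asserts (without detailed proof) a mixed-sign version of Lemma~\ref{lem:clasps in colorpure}---that a non-trivial color-pure braid with no uni-colored crossings is braid-isotopic to one containing a positive clasp, a negative clasp, or a Reidemeister~II pair---each of which strictly decreases crossing count after processing.
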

\begin{proof}
This proposition is proved in the same general fashion as Proposition \ref{prop:colorpure complex}, but is much simpler because we are not attempting to isolate the identity diagram.  As such we will be brief.

We begin by expanding $\Ach$ as an iterated multicone along each of the uni-colored crossings as in the logic of Figure \ref{fig:braid as mcone ex}.  The identity resolutions of such crossings sit in right-most homological degree for negative crossings, and so we have the uni-colored negative crossings contributing their part to $t^-$ (and all other resolutions contain intermediate colorings as required).

We then have a corresponding version of Lemma \ref{lem:clasps in colorpure} stating that a color-pure braid with no uni-colored crossings is braid isotopic to one having a positive clasp, a negative clasp, or a Reidemeister II move available.  A Reidemeister II move can be applied, creating a shift by the minimum of the two colors as necessary.  Clasps can be expanded as a tensor product of two copies of Equation \eqref{eq:posxDef} or \eqref{eq:negxDef}; in either case, we have a trapezoid (in either far left or far right homological degree) that can be replaced by a sum of terms including the identity diagram just as in the proof of Lemma \ref{lem:2-color clasp complex}, except this time we do not bother with Gaussian eliminations.  Instead, we simply note that all of the non-identity diagrams, whether in the same homological degree or not, have intermediate colorings as required.  The details of this expansion and the resulting homological placement of the identity diagram are left to the reader.
\end{proof}

Now given a color-pure semi-infinite braid word $\B_{(\gamma)}$ with only finitely many negative crossings, there is some minimal $r$ such that $\B_r$ is color-pure and $\B^r_\infty$ is both color-pure and positive.  Thus $\B^r_\infty$ has a maximal purity sequence $m_0<m_1<\cdots$, which we can use to define an inverse system $\{\KC{\B_{m_k+r}},f_k\}$ for $\B$.

\begin{corollary}\label{cor:fin many negs}
If $\B_{(\gamma)}$ is a color-complete semi-infinite braid with only finitely many crossings, then the inverse system assigned to $\B$ has an inverse limit $\KC{\B_{(\gamma)}}$ satisfying
\[\KC{\B_{(\gamma)}} \simeq \h^{t^-}\q^s P_{n,(\gamma)}\]
where $t^-$ is defined as in Equation \eqref{eq:neg shift} and $s$ is some other shift depending on the negative crossings present in $\B$.
\end{corollary}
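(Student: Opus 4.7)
The plan is to reduce to the positive color-complete case by decomposing $\B = \B_r \cdot \B^r_\infty$, where $r$ is chosen so that $\B^r_\infty$ contains no negative crossings and is color-pure. Because deleting finitely many crossings from a color-complete semi-infinite word still leaves a color-complete word (the color-completeness condition concerns the asymptotic structure), $\B^r_\infty$ is itself color-complete. Since $\B_r$ is color-pure with respect to $\gamma$, the coloring at the top of $\B^r_\infty$ is also $\gamma(r) = \gamma$, and Theorem \ref{thm:main thm precise} gives a Cauchy inverse system for $\B^r_\infty$ with limit $P_{n,(\gamma)}$.

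First, I identify the inverse system for $\B$. The system maps in the definition of $\{\KC{\B_{m_k+r}}, f_k\}$ come from resolving color-pure sub-braids of $\B^r_\infty$ via Proposition \ref{prop:colorpure complex}, and thus factor as $I_{\B_r} \otimes f_k'$ where $f_k'$ are the system maps for $\B^r_\infty$. By Lemma \ref{lem:Lim of product}, this system is Cauchy with inverse limit $\KC{\B_r} \otimes P_{n,(\gamma)}$.

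Next, I compute this tensor product. Proposition \ref{prop:colorpure posneg complex} writes $\KC{\B_r}$ as a multicone whose only identity term $I$ sits in homological degree $t^-$ (with some accompanying $q$-shift $s$ tracked through the negative crossings), while every other term $\delta_x$ contains an intermediate coloring $\gamma_x$ with $\cs{\gamma_x} < \cs{\gamma}$. Proposition \ref{prop:gen multicone equiv} allows me to tensor the whole multicone termwise with $P_{n,(\gamma)}$. The identity term contributes $\h^{t^-} \q^s P_{n,(\gamma)}$, so it remains only to show that $\delta_x \otimes P_{n,(\gamma)} \simeq 0$ for each non-identity term.

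This last absorption statement is the main obstacle, and it mirrors the technical core of Theorem \ref{thm:infFT with max purity seq}. By Lemma \ref{lem:Lim of product}, $\delta_x \otimes P_{n,(\gamma)} \simeq \lim_k \delta_x \otimes \KC{\FT^k_{(\gamma)}}$. Splitting $\delta_x = \delta_x^{top} \cdot \delta_x^{bot}$ at the intermediate coloring $\gamma_x$ and commuting the $k$ full twists between the two halves via Figure \ref{fig:pull rung thru twist} recolors each twist from $\gamma$ to $\gamma_x$, producing a positive homological shift of $2k(\cs{\gamma}-\cs{\gamma_x}) \geq 2k$ by Propositions \ref{prop:isotopyShifts} and \ref{prop:FT colorsize}. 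Hence $\delta_x \otimes \KC{\FT^k_{(\gamma)}}$ is chain homotopy equivalent to a complex supported in homological degrees at least $2k - C_x$ for a constant $C_x$ depending only on $\delta_x$. Comparing this inverse system with the trivial system $\{0, 0\}$ via Proposition \ref{prop:compSys} (the zero maps have homological order bounded below by the support of their cones, which grows linearly in $k$) yields $\delta_x \otimes P_{n,(\gamma)} \simeq 0$, and assembling the surviving identity contribution finishes the proof.
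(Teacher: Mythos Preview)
Your proof is correct and uses the same ingredients as the paper's: the decomposition $\B=\B_r\cdot\B^r_\infty$, Lemma~\ref{lem:Lim of product}, Proposition~\ref{prop:colorpure posneg complex}, and the homological shift from pulling rungs through full twists. The organization differs slightly: the paper works at each finite stage, expanding $\KC{\B_r\cdot\FT^k}$ as a multicone and building explicit quotient maps $F_\ell$ to the infinite-twist system, then invoking Proposition~\ref{prop:compSys}; you instead pass to the limit first, tensor the multicone for $\KC{\B_r}$ termwise with $P_{n,(\gamma)}$, and kill each non-identity summand $\delta_x\otimes P_{n,(\gamma)}$ by a separate limit argument. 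Both routes rest on the same shift estimate $t_x\geq 2k(\cs{\gamma}-\cs{\gamma_x})$, and your version is arguably cleaner since it isolates the vanishing of $\delta_x\otimes P_{n,(\gamma)}$ as a reusable statement.
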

\begin{proof}
Let $r$ be as above; $\B_{(\gamma)}$ being color-complete ensures that $(\B^r_\infty)_{(\gamma)}$ is color-complete and positive.  Combining Lemma \ref{lem:Lim of product} and Theorem \ref{thm:main thm precise} we have
\[\KC{\B} \simeq \KC{\B_r}\otimes\KC{\B^r_\infty} \simeq \KC{\B_r}\otimes P_{n,(\gamma)} \simeq \lim \KC{\B_r\cdot\FT^k}\]
where we've used the subsequence of complete full twists rather than the maximal purity sequence for simplicity.

Now we use Proposition \ref{prop:colorpure posneg complex} to expand each $\KC{\B_r\cdot\FT^k}$ as a multicone along $\KC{\B_r}\simeq\Ach$.  Every non-identity term in $\Ach$ has intermediate colorings of lesser color-size than $\gamma$, which means that each such term gets shifted in homological degree by some amount that grows with $k$ just as in the proof of Theorem \ref{thm:infFT with max purity seq}.  Thus each $\KC{\B_r\cdot\FT^k}$ is chain homotopy equivalent to a complex with $\FT^k$ in homological degree $t^-$, and other terms in higher homological degrees (once $k$ is large enough).  This enables one to build quotient maps $F_\ell$ from our inverse system to the system for $\FT^\infty$, whose cones live in larger and larger homological orders, allowing Proposition \ref{prop:compSys} to finish the argument as usual.  The details are left to the reader.
\end{proof}

\begin{corollary}\label{cor:main thm posneg}
Given a positive semi-infinite color-complete braid $\BB_{(\gamma)}$ on $n$ strands, the corresponding limiting complex $\KC{\BB_{(\gamma)}}$ is well-defined up to a degree shift that depends on the negative crossings included in the choice of word representing $\BB$.  Equivalently, a Reidemeister II move on a semi-infinite braid induces the same degree shift on the limiting complex that it would induce on a finite braid (this shift depends on the colorings of the strands).
\end{corollary}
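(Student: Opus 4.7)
The plan is to combine Corollary \ref{cor:fin many negs} with a generalization of Proposition \ref{prop:colorpure cx well defined} that allows word representatives carrying finitely many negative crossings. Let $\B$ be any semi-infinite word representing $\BB_{(\gamma)}$. Since $\BB$ is positive, some positive word $\B_0$ represents $\BB$, and $\B_0$ is converted into $\B$ via a finite sequence of braid moves; in particular $\B$ has only finitely many negative crossings. Color-completeness is a property of $\BB$ and is therefore inherited by $\B$, so Corollary \ref{cor:fin many negs} produces a well-defined limiting complex $\KC{\B} \simeq \h^{t^-(\B)}\q^{s(\B)} P_{n,(\gamma)}$.

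Next, suppose $\B$ and $\B'$ are two word representatives of $\BB$. As in Proposition \ref{prop:colorpure cx well defined}, there exist indices $\kappa$ and $\lambda$ such that the finite partial braids $\B_{m_\kappa}$ and $\B'_{m'_\lambda}$ are related by a finite sequence of braid-like Reidemeister moves (including Reidemeister II moves that create or cancel the finitely many negative crossings), while the truncated tails $\B^{m_\kappa}_\infty$ and $\B'^{m'_\lambda}_\infty$ are equal as positive semi-infinite words. By the convention fixed for $\KC{\cdot}$ on finite braids, such a Reidemeister II move on strands colored $i,j$ contributes a homological shift of $\h^{\min(i,j)}$ and an analogous quantum shift. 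Composing all the moves gives a chain homotopy equivalence $\KC{\B_{m_\kappa}} \simeq \h^\delta \q^\epsilon \KC{\B'_{m'_\lambda}}$, where $\delta,\epsilon$ are precisely the RII shifts accumulated by the finite transformation.

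I would then extend this equivalence to one between inverse systems: for $\ell$ past $\lambda$, define $F_\ell:\KC{\B'_{m'_\ell}} \to \h^\delta\q^\epsilon \KC{\B_{m_\ell}}$ by applying the finite-braid equivalence on the initial segment and the identity map on the common tail $\B^{m_\kappa}_{m_\ell}=\B'^{m'_\lambda}_{m'_\ell}$. Exactly as in Proposition \ref{prop:colorpure cx well defined}, these $F_\ell$ commute with the system maps (since the latter arise from color-pure sub-braid resolutions within the common tail) and are chain homotopy equivalences, hence have infinite homological order. Proposition \ref{prop:compSys} then yields $\KC{\B} \simeq \h^\delta \q^\epsilon \KC{\B'}$, which matches $\h^{t^-(\B)-t^-(\B')}\q^{s(\B)-s(\B')}$ from Corollary \ref{cor:fin many negs}; this proves both claims of the corollary.

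The main obstacle is purely one of bookkeeping: one must verify that the cumulative RII shift on the finite partial braid $\B_{m_\kappa}$ agrees with the difference of the global shifts recorded by Corollary \ref{cor:fin many negs} for $\B$ and $\B'$, and that no additional shift is introduced by the infinite tail. The latter is immediate because the tails are identical as positive words and the tail contribution to the limiting complex is $P_{n,(\gamma)}$ for both, absorbing any positive color-pure segment by Corollary \ref{cor:P absorbs colorpure}. The former is verified by noting that $t^-$ as defined in \eqref{eq:neg shift} depends only on the colors of the negative crossings present in each word, and that Reidemeister II moves are precisely the local moves that alter this count, each contributing the same $\min(i,j)$ shift both locally and globally.
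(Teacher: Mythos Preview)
Your proof is correct and rests on the same core observation as the paper's: any word representing the positive braid $\BB$ differs from a positive word by finitely many braid moves, hence carries only finitely many negative crossings, and Corollary~\ref{cor:fin many negs} then pins down the limiting complex as $\h^{t^-}\q^s P_{n,(\gamma)}$ with shifts determined entirely by those negative crossings. The paper's own proof is literally one sentence to this effect.

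Where you differ is in the level of detail: you go on to redo the comparison of inverse systems in the style of Proposition~\ref{prop:colorpure cx well defined}, building explicit maps $F_\ell$ between the systems for $\B$ and $\B'$ and invoking Proposition~\ref{prop:compSys}. This is not wrong, but it is redundant once Corollary~\ref{cor:fin many negs} is in hand, since that corollary already identifies \emph{each} limiting complex with a shifted copy of $P_{n,(\gamma)}$; the comparison between the two is then just a comparison of the shifts $t^-(\B)$ and $t^-(\B')$, which you correctly identify with the accumulated Reidemeister~II contributions. Your direct inverse-system comparison is a nice sanity check, and it does make explicit that the tails contribute nothing extra, but the paper is content to let Corollary~\ref{cor:fin many negs} do all the work.
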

\begin{proof}
This follows from Corollary \ref{cor:fin many negs} since we allow only finitely many Reidemeister moves.
\end{proof}

\subsection{Horizontal splittings}
Corollaries \ref{cor:indep of starting point} and \ref{cor:fin many negs} use finite `vertical' composition of color-pure braids.  We also have a result utilizing `horizontal' composition.  Rather than set up special notation just for this case, we present a simplistic visual version of the relevant result.  The reader can consult Corollary 3.8 in \cite{MAMW} for a more detailed statement in the uni-colored case.

\begin{corollary}\label{cor:horizontal splitting}
Suppose $\B_{(\gamma)}$ is a positive color-pure semi-infinite braid word that can be decomposed as
\[
\B_{(\gamma)} = 
\begin{tikzpicture}[x=2em,y=-4em,baseline={([yshift=-2ex]current bounding box.center)}]
\smnode at (-.3,0){$\gamma$};
\Bbox[0]{7}{1}{7}{$\B^0_r$}
\smnode at (-.3,1){$\gamma$};
\smnode at (.5,1){$\widehat{\gamma_1}$};
\BboxOnly[1]{1}{2}{$\widehat{\B_1}$}
\smnode at (2.5,1){$\widehat{\gamma_2}$};
\BboxOnly[1]{3}{4}{$\widehat{\B_2}$}
\node at (4,1.5){$\cdots$};
\smnode at (5.5,1){$\widehat{\gamma_k}$};
\BboxOnly[1]{6}{7}{$\widehat{\B_k}$}
\end{tikzpicture}
\]
for some finite $r\geq 0$ such that
\begin{itemize}
\item $\B^0_r$ is color-pure with respect to $\gamma$, and
\item each semi-infinite $\widehat{\B_i}$ (on $n_i<n$ strands) is color-complete with respect to the coloring $\widehat{\gamma_i}$ obtained from partitioning $\gamma$ as indicated.
\end{itemize}
Then there is a well-defined limiting complex $\KC{\B_{(\gamma)}}$ satisfying
\[
\KC{\B_{(\gamma)}} \simeq
\begin{tikzpicture}[x=2.75em,y=-2em,baseline={([yshift=-2ex]current bounding box.center)}]
\Bbox[0]{7}{1}{7}{$\KC{\B^0_r}$}
\BboxOnly[1]{1}{2}{$P_{n_1,(\widehat{\gamma_1})}$}
\BboxOnly[1]{3}{4}{$P_{n_2,(\widehat{\gamma_2})}$}
\node at (4,1.5){$\cdots$};
\BboxOnly[1]{6}{7}{$P_{n_k,(\widehat{\gamma_k})}$}
\end{tikzpicture}.
\]
\end{corollary}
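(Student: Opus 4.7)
The strategy is to exploit the horizontal independence of the pieces $\widehat{\B_i}$, so that the limiting process decouples into a separate limit in each slot together with a finite tensor factor for $\KC{\B^0_r}$. First I would note that each $\widehat{\B_i}$ is in particular color-pure with respect to $\widehat{\gamma_i}$, since color-completeness is only defined for color-pure braids (Definition \ref{def:color complete}). Because the $\widehat{\B_i}$ live in disjoint vertical strips inside $\B$, any partial braid $\B_\ell$ for $\ell\geq r$ consists of $\B^0_r$ stacked on top of the horizontal juxtaposition of partial braids $(\widehat{\B_1})_{\ell_1},\ldots,(\widehat{\B_k})_{\ell_k}$, where $\ell_i$ counts the crossings of $\B_\ell$ lying in the $i$th slot. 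The coloring at the bottom of $\B_\ell$ is then the concatenation of the bottom colorings of the $(\widehat{\B_i})_{\ell_i}$, so $\B_\ell$ is color-pure with respect to $\gamma$ if and only if each $(\widehat{\B_i})_{\ell_i}$ is color-pure with respect to $\widehat{\gamma_i}$.

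Next, since each $\widehat{\B_i}$ is color-complete, its maximal purity sequence is cofinal in $\N$, and a direct interleaving argument produces an increasing sequence $\ell^{(1)}<\ell^{(2)}<\cdots$ such that at each $\ell^{(j)}$ every index $\ell_i^{(j)}$ lies in the maximal purity sequence of $\widehat{\B_i}$, and $\ell_i^{(j)}\to\infty$ as $j\to\infty$. This is in particular a cofinal subsequence of the maximal purity sequence of $\B$, so by the second half of Proposition \ref{prop:colorpure inv sys} it suffices to identify the limit along this subsequence. For each such $\ell^{(j)}$ the planar algebra structure on $\foamC$ supplies the factorization
\[
\KC{\B_{\ell^{(j)}}}\;\simeq\;\KC{\B^0_r}\otimes\bigotimes_{i=1}^{k}\KC{(\widehat{\B_i})_{\ell_i^{(j)}}},
\]
where the outer tensor is the vertical one and the inner tensor is the horizontal one, and the system maps arising from Proposition \ref{prop:colorpure inv sys} for $\B$ decompose as the identity on $\KC{\B^0_r}$ tensored with the individual system map for one of the $\widehat{\B_i}$ (extended by identities on the other slots).

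I would then apply Lemma \ref{lem:Lim of product} repeatedly, once to pull $\KC{\B^0_r}$ outside the limit and once per slot to pull the limit inside the horizontal tensor; the horizontal version of Lemma \ref{lem:Lim of product} follows from the same argument as the vertical one, since tensoring with a fixed complex preserves homological orders of chain maps regardless of the direction of the planar tensor. Each inner limit equals $P_{n_i,(\widehat{\gamma_i})}$ by Corollary \ref{cor:main result pos braids}, producing the claimed description and, in particular, showing that the limit exists, so that the system for $\B_{(\gamma)}$ is Cauchy. The main obstacle I anticipate is the combinatorial bookkeeping around the interleaving: one must verify carefully that the sequence $\ell^{(j)}$ is simultaneously a subsequence of the maximal purity sequence of each $\widehat{\B_i}$ and of $\B$ itself, and that the system maps of Proposition \ref{prop:colorpure inv sys} for $\B$ genuinely factor as advertised. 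Both facts are routine once one unpacks the Proposition \ref{prop:colorpure complex} resolution of the color-pure sub-braid $\B^{m_j}_{m_{j+1}}$ into its horizontal pieces, but they do need to be spelled out.
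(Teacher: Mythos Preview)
Your approach is essentially the same as the paper's: split off the finite prefix $\B^0_r$ vertically via Lemma~\ref{lem:Lim of product}, then use the horizontal monoidal structure of $\foamC$ together with Theorem~\ref{thm:main thm precise} on each $\widehat{\B_i}$ to identify the remaining limit with $\bigotimes_i P_{n_i,(\widehat{\gamma_i})}$. The paper's proof is considerably terser (it simply \emph{defines} $\KC{\B_{(\gamma)}}:=\KC{\B_r}\otimes\KC{\B^r_\infty}$ and defers the rest to \cite{MAMW}), so your added bookkeeping about purity subsequences is a reasonable expansion rather than a different route.

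Two small remarks. First, your ``interleaving argument'' is unnecessary: the hypothesis already says $\B_{(\gamma)}$ is color-pure, and you correctly observe that $\B_\ell$ is color-pure iff every $(\widehat{\B_i})_{\ell_i}$ is, so the maximal purity sequence of $\B$ itself automatically lands each $\ell_i$ at a purity point of $\widehat{\B_i}$; cofinality in each slot follows because each $\widehat{\B_i}$ is infinite. Second, the phrase ``apply Lemma~\ref{lem:Lim of product} once per slot'' is slightly imprecise: Lemma~\ref{lem:Lim of product} tensors a single inverse system with a \emph{fixed} complex, whereas here all $k$ slots vary simultaneously. What you actually need is the observation that $\bigotimes_i P_{n_i,(\widehat{\gamma_i})}$ satisfies the defining property of the limit directly, via the tensor product of the structure maps $\tilde f_{i}$ and the fact that $|\tilde f_1\otimes\cdots\otimes\tilde f_k|_\h\geq\min_i|\tilde f_i|_\h$. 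This is the same one-line argument as the proof of Lemma~\ref{lem:Lim of product}, just not literally an iteration of it; the paper glosses over this point as well.
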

\begin{proof}
As in Corollary \ref{cor:fin many negs} we use our value $r$ to define $\KC{\B_{(\gamma)}}:=\KC{\B_r}\otimes\KC{\B^r_\infty}$.  We then appeal to Lemma \ref{lem:Lim of product} and the `horizontal' concatenation properties of $\foamC$ to complete the proof.  See the proof of Corollary 3.8 in \cite{MAMW} for slightly more detail.
\end{proof}

\subsection{Bi-infinite braids}
For semi-infinite braids it was easy to consider a given coloring $\gamma$ as applying to the strands at the `start' of the braid $\B$.  In order to write down a well-defined limiting complex $\KC{\B}$ then, it was required that the coloring at the `end' of the braid $\B$ was also fixed.  Demanding that the `start' and `end' match naturally leads to the definition of color-purity for $\B$, as motivated by the purity (and hence color-purity) of the powers of the full twist.

If we wish to generalize to bi-infinite braids, we will need to alter our approach slightly.

\begin{definition}\label{def:bi-inf braid word}
A \emph{bi-infinite braid word} is a map $\B:\Z\rightarrow\BnGen$; the word is called \emph{positive} if $\im(\B)\subset\BnGen^+$.  Partial words $\B^a_b$ and truncated words $\B^{-\infty}_a,\B^b_\infty$ are defined in the obvious way.  We say $\B$ is \emph{colored from $\gamma$ to $\gamma'$}, and denote it $\Btwocol[\gamma]$, if there exist two integers $\ell_0\leq m_0$, called \emph{starting points}, such that the following properties hold.
\begin{itemize}
\item The partial word $\B^{\ell_0}_{m_0}$ can be colored as $(\B^{\ell_0}_{m_0})^{(\gamma)}_{(\gamma')}$, and neither $\gamma$ nor $\gamma'$ exist as intermediate colorings in $\B^{\ell_0}_{m_0}$.
\item The truncated semi-infinite words $\B^{-\infty}_{\ell_0}$ and $\B^{m_0}_\infty$ are color-pure with respect to $\gamma$ and $\gamma'$, respectively.  (Here color-purity of $\B^{-\infty}_{\ell_0}$ is defined in the obvious way.)
\end{itemize}
In this case we have a function $\gamma(\cdot):\Z\rightarrow\{1,\dots,N\}^n$ of induced colorings between crossings as before, with two maximal purity sequences $\ell_0>\ell_1>\cdots$ and $m_0<m_1<\cdots$ satisfying $\gamma(\ell_i)=\gamma$ and $\gamma(m_i)=\gamma'$ for all $i$.
\end{definition}

Thus we visualize a colored bi-infinite braid word as having some central `starting word', and `growing outwards' from there:
\[
\begin{tikzpicture}[x=2em,y=-2em,baseline={([yshift=-2ex]current bounding box.center)}]
\Bboxst[0]{$\B$}
\smnode at (-.3,0){$\gamma$};
\smnode at (-.3,1){$\gamma'$};
\end{tikzpicture}
\qq{=}
\begin{tikzpicture}[x=2em,y=-2em,baseline={([yshift=-2ex]current bounding box.center)}]
\smnode at (-.3,0){$\gamma$};
\Bboxst[0]{$\B^{-\infty}_{\ell_0}$}
\smnode at (-.3,1){$\gamma$};
\Bboxst[1]{$\B^{\ell_0}_{m_0}$}
\smnode at (-.3,2){$\gamma'$};
\Bboxst[2]{$\B^{m_0}_\infty$}
\smnode at (-.3,3){$\gamma'$};
\end{tikzpicture}.
\]

Now bi-infinite braids should be considered unchanged by finitely many Reidemeister moves as before, but also by shifts in the function $\B$ since there is no well-defined `starting point' for the braid.

\begin{definition}\label{def:bi-inf braid}
Given a bi-infinite braid word $\B$ and a finite $s\in\Z$, the \emph{shifted} braid word $\B[s]:\Z\rightarrow\BnGen$ is the map $\B[s](i):=\B(i-s)$.  Then a \emph{bi-infinite braid} $\BB$ is an equivalence class of bi-infinite braid words up to finitely many braid moves and shifts, and as before we consider $\BB$ \emph{positive} if some word representing it is positive, and we can color $\BB$ if we can color one (and thus all) of its representatives.
\end{definition}

It should be clear that a colored positive bi-infinite braid $\B$ gives rise to inverse systems for $\B^{-\infty}_{\ell_0}$ and $\B^{m_0}_\infty$, which we can concatenate with $\B^{\ell_0}_{m_0}$ to define an inverse system for $\B$.  Such a system would appear to depend on the choice of starting points $\ell_0<m_0$ in general.  Still, this viewpoint makes it clear what color-completeness should mean.

\begin{definition}\label{def:bi-inf color-complete}
A positive colored bi-infinite braid word $\Btwocol[\gamma]$ is called \emph{color-complete} if, for some (and thus any) choice of starting points $\ell_0<m_0$ satisfying the coloring definition, both semi-infinite braid words $\B^{-\infty}_{\ell_0}$ and $\B^{m_0}_\infty$ are color complete with respect to $\gamma$ and $\gamma'$, respectively.  As usual, the positive braid $\BBtwocol$ is color-complete if some (and thus any) representative word for $\BB$ is.
\end{definition}

The following corollary provides a precise version of Theorem \ref{thm:imprecise main thm bi-inf}.

\begin{corollary}\label{cor:main thm bi-inf}
To a positive color-complete bi-infinite braid word $\Btwocol[\gamma]$ we may assign an inverse system with limiting complex 
\[\KC{\Btwocol[\gamma]}\simeq P_{n,(\gamma)} \otimes \KC{\B^{\ell_0}_{m_0}} \otimes P_{n,(\gamma')}\]
that is independent of the choice of starting points $\ell_0<m_0$ up to chain homotopy equivalence.  Thus we may assign a corresponding limiting complex to any positive color-complete bi-infinite braid that is well-defined up to degree shifts allowing for Reidemeister II moves creating (or deleting) finitely many negative crossings in the representative word.
\end{corollary}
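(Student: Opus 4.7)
The plan is to reduce the bi-infinite case to the semi-infinite theorem (Theorem \ref{thm:main thm precise}) by splitting $\Btwocol[\gamma]$ into a top semi-infinite piece, a finite middle, and a bottom semi-infinite piece, and taking simultaneous limits on both sides. Fix a pair of starting points $\ell_0 \le m_0$ from Definition \ref{def:bi-inf color-complete}, and let $\ell_0 > \ell_1 > \cdots$ and $m_0 < m_1 < \cdots$ be the maximal purity sequences for the color-complete semi-infinite pieces $\B^{-\infty}_{\ell_0}$ and $\B^{m_0}_\infty$, respectively. I would assemble the inverse system $\{\KC{\B^{\ell_k}_{m_k}}, h_k\}_{k \ge 0}$, whose system maps come from Proposition \ref{prop:colorpure inv sys} by simultaneously quotienting color-pure sub-braids at the top (in $\B^{\ell_{k+1}}_{\ell_k}$) and at the bottom (in $\B^{m_k}_{m_{k+1}}$). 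The leftward semi-infinite framework behaves just like the standard one after a vertical reflection, since all arguments in Section \ref{sec:proving main thm} are local and insensitive to orientation.

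Next, since each partial braid factors vertically as $\B^{\ell_k}_{m_k} = \B^{\ell_k}_{\ell_0} \cdot \B^{\ell_0}_{m_0} \cdot \B^{m_0}_{m_k}$ and $\KC{\cdot}$ is compatible with vertical concatenation,
\[
\KC{\B^{\ell_k}_{m_k}} \simeq \KC{\B^{\ell_k}_{\ell_0}} \otimes \KC{\B^{\ell_0}_{m_0}} \otimes \KC{\B^{m_0}_{m_k}}.
\]
Two applications of Lemma \ref{lem:Lim of product} will then distribute the joint limit across the outer tensor factors: I would first take the right-hand limit with $\KC{\B^{\ell_k}_{\ell_0}} \otimes \KC{\B^{\ell_0}_{m_0}}$ held fixed to obtain $\KC{\B^{\ell_k}_{\ell_0}} \otimes \KC{\B^{\ell_0}_{m_0}} \otimes P_{n,(\gamma')}$ by Theorem \ref{thm:main thm precise} for the right half, and then take the left-hand limit with $\KC{\B^{\ell_0}_{m_0}} \otimes P_{n,(\gamma')}$ held fixed, producing the advertised limit $P_{n,(\gamma)} \otimes \KC{\B^{\ell_0}_{m_0}} \otimes P_{n,(\gamma')}$. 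This simultaneously yields the Cauchy property and identifies the limit.

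For independence of the starting points it suffices, by passing to a common refinement, to compare two pairs $(\ell_0, m_0)$ and $(\ell_0', m_0')$ satisfying $\ell_0' \le \ell_0 \le m_0 \le m_0'$. Factoring $\KC{\B^{\ell_0'}_{m_0'}} \simeq \KC{\B^{\ell_0'}_{\ell_0}} \otimes \KC{\B^{\ell_0}_{m_0}} \otimes \KC{\B^{m_0}_{m_0'}}$, the outer factors are color-pure (with respect to $\gamma$ and $\gamma'$) and are absorbed into the neighbouring projectors via Corollary \ref{cor:P absorbs colorpure}, recovering the same formula. Lifting from words to bi-infinite braids $\BBtwocol$ is then routine: shifts $\B[s]$ merely reparametrize the system, finite sequences of braid moves affect only the middle factor $\KC{\B^{\ell_0}_{m_0}}$ up to chain homotopy equivalence, and finitely many negative crossings produced by Reidemeister II moves contribute the expected degree shift via Corollary \ref{cor:fin many negs}. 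The main technical subtlety is the initial setup of the two-sided inverse system and verifying that the homological orders of its system maps still grow unboundedly (which follows from the one-sided Cauchy statements via a triangle-inequality-type decomposition of $\cone(h_k)$ into the left and right halves); the remainder is a clean assembly of Theorem \ref{thm:main thm precise}, Lemma \ref{lem:Lim of product}, and Corollary \ref{cor:P absorbs colorpure}.
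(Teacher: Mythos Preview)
Your approach is essentially the paper's: split $\B$ into two semi-infinite color-complete pieces flanking a finite middle, invoke Theorem~\ref{thm:main thm precise} on each side, and use Lemma~\ref{lem:Lim of product} to assemble the limit; then use Corollary~\ref{cor:P absorbs colorpure} for independence of starting points and Corollary~\ref{cor:fin many negs} for negative crossings. Your more explicit construction of the two-sided system and the Cauchy check is fine and matches what the paper leaves implicit.

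There is one small gap in your independence argument. You claim it suffices, ``by passing to a common refinement,'' to treat the nested case $\ell_0' \le \ell_0 \le m_0 \le m_0'$. But Definition~\ref{def:bi-inf braid word} requires that neither $\gamma$ nor $\gamma'$ appear as an intermediate coloring of $\B^{\ell_0}_{m_0}$, and this constraint blocks the existence of a common refinement in general: if $(\ell_0,m_0)$ and $(\ell_0',m_0')$ are disjoint, say $\ell_0 < m_0 \le \ell_0' < m_0'$, then any would-be enveloping pair $(\ell_0'',m_0'')$ with $\ell_0'' \le \ell_0$ and $m_0'' \ge m_0'$ has $\gamma(m_0)=\gamma'$ sitting strictly between $\ell_0''$ and $m_0''$, violating the definition. (Indeed, when $\gamma\neq\gamma'$ the minimality built into the definition forces distinct valid pairs to be disjoint rather than nested.) The paper handles exactly this disjoint case directly: partition $\B$ using all four indices, observe that $\B^{m_0}_{\ell_0'}$ is color-pure with respect to $\gamma'$ (absorbable into $P_{n,(\gamma')}$ on the right) while $\B^{\ell_0}_{\ell_0'}$ is color-pure with respect to $\gamma$ (absorbable into $P_{n,(\gamma)}$ on the left). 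The fix uses precisely the tool you already cite, so replacing your refinement step with this direct case analysis completes the argument.
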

\begin{proof}
For a fixed choice of starting points $\ell_0<m_0$ it is clear from Theorem \ref{thm:main thm precise} and Lemma \ref{lem:Lim of product} that our assumptions ensure that the inverse system has limiting complex as described.  If we choose different starting points $\ell'_0<m'_0$, we partition $\B$ into separate pieces using all four starting points.  Depending on the relative positions of the starting points, we will then have a color-pure finite braid available which can be absorbed into one of the limiting complexes on either end via Corollary \ref{cor:P absorbs colorpure}.  One case is illustrated below, with the $\KC{\cdot}$ notation omitted:
\[
\begin{tikzpicture}[x=2.2em,y=-2.4em,baseline={([yshift=-2ex]current bounding box.center)}]
\smnode at (-.3,0){$\gamma$};
\Bboxst[0]{$P_{n,(\gamma)}$}
\smnode at (-.3,1){$\gamma$};
\Bboxst[1]{$\B^{\ell_0}_{m_0}$}
\smnode at (-.3,2){$\gamma'$};
\Bboxst[2]{$P_{n,(\gamma')}$}
\smnode at (-.3,3){$\gamma'$};
\end{tikzpicture}
\qq{\simeq}
\begin{tikzpicture}[x=2.2em,y=-2.4em,baseline={([yshift=-2ex]current bounding box.center)}]
\smnode at (-.3,0){$\gamma$};
\Bboxst[0]{$P_{n,(\gamma)}$}
\smnode at (-.3,1){$\gamma$};
\Bboxst[1]{$\B^{\ell_0}_{m_0}$}
\smnode at (-.3,2){$\gamma'$};
\Bboxst[2]{$\B^{m_0}_{\ell'_0}$}
\smnode at (-.3,3){$\gamma$};
\Bboxst[3]{$\B^{\ell'_0}_{m'_0}$}
\smnode at (-.3,4){$\gamma'$};
\Bboxst[4]{$P_{n,(\gamma')}$}
\smnode at (-.3,5){$\gamma'$};
\end{tikzpicture}
\qq{\simeq}
\begin{tikzpicture}[x=2.2em,y=-2.4em,baseline={([yshift=-2ex]current bounding box.center)}]
\smnode at (-.3,0){$\gamma$};
\Bboxst[0]{$P_{n,(\gamma)}$}
\smnode at (-.3,1){$\gamma$};
\Bboxst[1]{$\B^{\ell'_0}_{m'_0}$}
\smnode at (-.3,2){$\gamma'$};
\Bboxst[2]{$P_{n,(\gamma')}$}
\smnode at (-.3,3){$\gamma'$};
\end{tikzpicture}.
\]
The equivalence on the left is viewing $\B^{m_0}_{m'_0}$ as color-pure with respect to $\gamma'$, while the equivalence on the right is viewing $\B^{\ell_0}_{\ell'_0}$ as color-pure with respect to $\gamma$.

With finitely many negative crossings in a word $\B$, we use a suitably modified version of Corollary \ref{cor:fin many negs} to get our result.  Finite shifts are also easy to handle by shifting the starting points as well.  Details are left to the reader.
\end{proof}

\begin{remark}
We can apply similar (and simpler) reasoning to assign limiting complexes to positive semi-infinite braid words $\B$ colored from $\gamma$ to $\gamma'$ (that is to say, $\gamma(0)=\gamma$ and $\gamma(a)=\gamma'$ for infinitely many $a$).  If we let $r$ be the smallest index for which $\gamma(r)=\gamma'$, then we can decompose $\B=\B^0_r \B^r_\infty$ and conclude
\[\KC{\B^{(\gamma)}_{(\gamma')}}\simeq \KC{\B^0_r}\otimes P_{n,(\gamma')}.\]
We leave it to the reader to fill in the details, including the passage to positive semi-infinite braids where a Reidemeister move may change the necessary value of $r$, but the limiting complex will remain the same up to chain homotopy equivalence (and degree shifts for Reidemeister II moves) thanks to Lemma \ref{lem:Lim of product} and Corollary \ref{cor:P absorbs colorpure}.
\end{remark}

\begin{remark}
Definition \ref{def:bi-inf braid word} and Corollary \ref{cor:main thm bi-inf} view bi-infinite braids as built `outwards'.  This seems the most natural definition to us, or at least the most amenable to our methods in this paper.  One could also imagine an infinite braid built `inwards', perhaps by defining $\B$ as a `limit' of a sequence of finite words where $\B_{i+1}$ is built by inserting various braids throughout $\B_i$.  As long as such insertions are color-pure, this would preserve the overall coloring leading to a well-defined inverse system and the potential for a limiting complex as above.  However, it is possible to have several non-color-pure insertions that, when taken together, maintain the colors at the endpoints.  It seems unclear whether such a process could also produce an inverse system of maps leading to a well-defined limiting complex.
\end{remark}




\bibliographystyle{alpha}

\bibliography{references}

\end{document}